\theoremstyle{plain}
\newtheorem{theorem}{Theorem}[section]
\newtheorem{lemma}[theorem]{Lemma}
\newtheorem{corollary}[theorem]{Corollary}
\newtheorem{proposition}[theorem]{Proposition}
\theoremstyle{definition}
\newtheorem{definition}[theorem]{Definition}
\newtheorem{definition-theorem}[theorem]{Definition-Theorem}
\newtheorem{example}[theorem]{Example}
\theoremstyle{remark}
\newtheorem{remark}[theorem]{Remark}
\numberwithin{equation}{section} \setcounter{tocdepth}{1}
\newcommand{\tll}{\Theta_{ll}}
\newcommand{\tnn}{\Theta_{nn}}
\newcommand{\tln}{\Theta_{ln}}
\newcommand{\tun}{\Theta_{un}}
\newcommand{\tul}{\Theta_{ul}}
\newcommand{\tuu}{\Theta_{uu}}
\newcommand{\Bt}{\mathcal{B}_t}
\DeclareRobustCommand{\loplus}{\mathbin{\mathpalette\dog@lsemi{+}}}
\DeclareRobustCommand{\lotimes}{\mathbin{\mathpalette\dog@lsemi{\times}}}
\DeclareRobustCommand{\roplus}{\mathbin{\mathpalette\dog@rsemi{+}}}
\DeclareRobustCommand{\rotimes}{\mathbin{\mathpalette\dog@rsemi{\times}}}
\newcommand{\dog@rsemi}[2]{\dog@semi{#1}{#2}{-90,90}}
\newcommand{\dog@lsemi}[2]{\dog@semi{#1}{#2}{270,90}}
\newcommand{\dog@semi}[3]{%
  \begingroup
  \sbox\z@{$\m@th#1#2$}%
  \setlength{\unitlength}{\dimexpr\ht\z@+\dp\z@\relax}%
  \makebox[\wd\z@]{\raisebox{-\dp\z@}{%
    \begin{picture}(1,1)
    \linethickness{\variable@rule{#1}}
    \roundcap
    \put(0.5,0.5){\makebox(0,0){\raisebox{\dp\z@}{$\m@th#1#2$}}}
    \put(0.5,0.5){\arc[#3]{0.5}}
    \end{picture}%
  }}%
  \endgroup
}
\newcommand{\variable@rule}[1]{%
  \fontdimen8  
  \ifx#1\displaystyle\textfont3\else
    \ifx#1\textstyle\textfont3\else
      \ifx#1\scriptstyle\scriptfont3\else
        \scriptscriptfont3\relax
  \fi\fi\fi
}
\newcommand{\surj}{\to\kern-1.8ex\to}
\newcommand{\cP}{\mathcal{P}}
\newcommand{\cA}{\mathcal{A}}
\newcommand{\cK}{\mathcal{K}}
\newcommand{\cB}{\mathcal{B}}
\newcommand{\Conf}{\mathrm{Conf}}
\newcommand{\Sol}{\mathrm{Sol}}
\newcommand{\fre}{\mathfrak{e}}
\newcommand{\frf}{\mathfrak{f}}
\newcommand{\G}{\mathrm{G}}
\newcommand{\cH}{\mathcal{H}}
\newcommand{\mS}{\mathrm{S}}
\newcommand{\cL}{\mathcal{L}}
\newcommand{\ric}{\mathrm{Ric}}
\newcommand{\cI}{\mathcal{I}}
\newcommand{\dd}{\mathrm{d}}
\newcommand{\Cl}{\mathrm{Cl}}
\newcommand{\Gl}{\mathrm{Gl}}
\newcommand{\U}{\mathcal{U}}
\begin{document}

\title[Parallel spinor flows]{Parallel spinor flows on three-dimensional Cauchy hypersurfaces}
 
\author[\'A. Murcia]{\'Angel Murcia$^*$}

\address{Istituto Nazionale di Fisica Nucleare, Sezione di Padova, Repubblica Italiana}
\email{angel.murcia@pd.infn.it}

\author[C. S. Shahbazi]{C. S. Shahbazi} \address{Departamento de Matem\'aticas, Universidad UNED - Madrid, Reino de Espa\~na}
\email{cshahbazi@mat.uned.es} 
\address{Fachbereich Mathematik, Universit\"at Hamburg, Bundesrepublik Deutschland.}
\email{carlos.shahbazi@uni-hamburg.de}

\thanks{2020 MSC. Primary: 53C50 . Secondary: 58J45. Keywords: Lorentzian four-manifolds, parallel spinors, pp-waves, Cauchy hypersurfaces, initial value problem}
\thanks{$^*$Corresponding author: \texttt{angel.murcia@pd.infn.it} .}

\maketitle

\begin{abstract} 
The three-dimensional parallel spinor flow is the evolution flow defined by a parallel spinor on a globally hyperbolic Lorentzian four-manifold. We prove that, despite the fact that Lorentzian metrics admitting parallel spinors are not necessarily Ricci flat, the parallel spinor flow preserves the vacuum momentum and Hamiltonian constraints and therefore the Einstein and parallel spinor flows coincide on common initial data. Using this result, we provide an initial data characterization of parallel spinors on Ricci flat Lorentzian four-manifolds, which in turn yields the first initial data characterization of Ricci-flat pp-waves. Furthermore, we explicitly solve the left-invariant parallel spinor flow on simply connected Lie groups, obtaining along the way necessary and sufficient conditions for the flow to be immortal. These are, to the best of our knowledge, the first non-trivial examples of evolution flows of parallel spinors. Finally, we use some of these examples to construct families of $\eta\,$-Einstein cosymplectic structures and to produce solutions to the left-invariant Ricci flow in three dimensions. This suggests the intriguing possibility of using first-order hyperbolic spinorial flows to construct special solutions of curvature flows in Riemannian signature.
\end{abstract}



\section{Introduction}
\label{sec:intro}

 
This paper is devoted to the study of the evolution problem posed by a parallel real and irreducible spinor defined on a globally hyperbolic Lorentzian four-manifold $(M,g)$. This problem is well-posed by the results of Leistner and Lichewski, who proved the statement in arbitrary dimension \cite{Lischewski:2015cya,LeistnerLischewski}. Existence of such a parallel spinor field is obstructed since it implies $(M,g)$ to be a solution of Einstein equations with pure radiation type of energy momentum tensor \cite{Mars}. This fact, which translates into a curvature condition on the underlying globally hyperbolic Lorentzian four-manifold, relates the study of parallel real spinors to the study of globally hyperbolic Lorentzian manifolds satisfying a given curvature condition, which has been a fundamental problem in global Lorentzian geometry since the seminal work of G. Mess \cite{Mess,BonsanteSeppi}. 

The starting point of our work is the theory of spinorial polyforms and parabolic pairs, recently proposed in \cite{Cortes:2019xmk}, which allows to study first-order spinorial equations on pseudo-Riemannian manifolds in terms of differential systems for algebraically constrained polyforms, a reformulation which is especially convenient to study global geometric and topological aspects of such equations. The study of spinorial differential systems through differential forms is by now classical in the mathematics and physics literature, see for instance \cite{Graf,BennTucker,Lawson,Harvey,Tod1,Tod2} and their refences and citations for more details. Reference \cite{Cortes:2019xmk} elaborates on these earlier works to provide a bijection between certain irreducible spinors and polyforms based on the precise description of the square of a spinor in terms of a semi-algebraic real body in the K\"ahler-Atiyah bundle of the underlying pseudo-Riemannian manifold. Using the formalism of parabolic pairs \cite{Cortes:2019xmk}, our first result (see Theorem \ref{thm:parallelspinorflow}) reformulates the evolution problem of a parallel spinor as a system of flow equations for a family of functions  $\left\{ \beta_t\right\}_{t\in \cI}$ and a family of coframes $\left\{ \fre^t\right\}_{t\in \cI}$ on an appropriately chosen Cauchy hypersurface $\Sigma \subset M$. This system of equations yields a generalization of \cite[Theorem 5.4]{Murcia:2020zig} to the case in which the family $\left\{\beta_t\right\}_{t\in\cI}$ is non-trivial and defines the notion of \emph{parallel spinor flow} as a family $\left\{\beta_t , \fre^t\right\}_{t\in\cI}$ satisfying equations \eqref{eq:globhyperspinorflowI} and \eqref{eq:globhyperspinorflowII}. Using the notion of parallel spinor flow, the constraint equations of the initial value problem of a parallel spinor can be shown to be equivalent to a differential system, the \emph{parallel Cauchy differential system} \cite{Murcia:2020zig}, for a pair $(\fre,\Theta)$, where $\fre$ is a coframe and $\Theta$ is a symmetric two-tensor on $\Sigma$. The Riemannian metric $h$ induced by $(M,g)$ on $\Sigma$ is given by the canonical metric for which $\fre = (e_u,e_l,e_n)$ becomes an orthonormal coframe, that is, $h_{\fre} = e_u\otimes e_u +e_l\otimes e_l +e_n\otimes e_n$. This allows us to define a natural map:
\begin{equation*}
\Psi\colon \Conf(\Sigma) \to \mathrm{Met}(\Sigma) \times \Gamma(T^{\ast}M\odot T^{\ast}M)\, , \qquad (\fre,\Theta) \mapsto (h_{\fre},\Theta)\, ,
\end{equation*}
 
\noindent
where $\Conf(\Sigma)$ denotes the space of variables $(\fre, \Theta)$. The relevance of this map becomes apparent after observing that $\mathrm{Met}(\Sigma) \times \Gamma(T^{\ast}M\odot T^{\ast}M)$ is precisely the configuration space of the vacuum \emph{Hamiltonian} and \emph{momentum} constraint equations \cite{ChoquetBruhatBook}. This allows to define the notion of initial data \emph{admissible} to both the parallel spinor flow and the vacuum Einstein flow, which immediately leads to the natural question of the compatibility of both flows when starting on common admissible data. We solve this question in the affirmative in Theorem \ref{thm:preservingconstraints}, where we prove that a parallel spinor flow whose initial data are admissible to both problems preserves the Hamiltonian and momentum constraints and produces a Ricci flat Lorentzian four-manifold $(M,g)$. We find this result interesting because it allows us to obtain an \emph{initial data} characterization of parallel spinors on a Ricci flat Lorentzian four-manifold which, to the best of our knowledge, is the first of its type in the literature. More precisely, we prove the following result as a consequence of Theorem \ref{thm:preservingconstraints}.

\begin{corollary}
An initial vacuum data $(\Sigma,h,\Theta)$ admits a Ricci flat Lorentzian development carrying a parallel spinor if and only if there exists a global orthonormal coframe $\fre$ on $\Sigma$ such that $(\fre,\Theta)$ is a parallel Cauchy pair, that is, if and only if $(\fre,\Theta)$ satisfies the following differential system:
\begin{eqnarray*}
\dd e_u=\Theta(e_u)\wedge e_u \, , \quad \dd e_l=\Theta(e_l) \wedge e_u  \, , \quad \dd e_n   = \Theta(e_n) \wedge e_u \, , \quad  [\Theta(e_u) ] = 0 \in H^1(\Sigma,\mathbb{R})\, .\label{eq:exderKIIintro}
\end{eqnarray*}

\noindent
where $\fre = (e_u,e_l,e_n)$.
\end{corollary}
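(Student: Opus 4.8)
The plan is to derive the corollary as a direct combination of Theorem~\ref{thm:preservingconstraints} with the reformulation of the parallel spinor equation provided by Theorem~\ref{thm:parallelspinorflow} and the parallel Cauchy differential system of \cite{Murcia:2020zig}. The structural fact underlying both implications is that, within the spinorial polyform and parabolic pair formalism of \cite{Cortes:2019xmk}, a parallel spinor $\varepsilon$ on a globally hyperbolic Lorentzian four-manifold $(M,g)$ restricts on a Cauchy hypersurface $\Sigma$ to a parabolic pair which, after the natural normalization, is equivalent to a global coframe $\fre=(e_u,e_l,e_n)$ on $\Sigma$ together with the second fundamental form $\Theta$ of $\Sigma\subset M$, and, crucially, that the Riemannian metric induced by $g$ on $\Sigma$ equals the canonical metric $h_{\fre}$ for which $\fre$ is orthonormal.

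For the \emph{only if} direction I would argue as follows. Assume $(\Sigma,h,\Theta)$ admits a Ricci flat Lorentzian development $(M,g)$ carrying a parallel spinor $\varepsilon$. Being parallel, $\varepsilon$ has constant and nowhere vanishing norm, so its associated polyform is everywhere non-degenerate and produces a genuinely global coframe $\fre$ on $\Sigma$ with $h_{\fre}=h$, while $\Theta$ is the second fundamental form of $\Sigma$ in $M$. Restricting the parallel spinor equation on $(M,g)$ to $\Sigma$ then reproduces exactly the parallel Cauchy differential system of \cite{Murcia:2020zig}, namely $\dd e_u=\Theta(e_u)\wedge e_u$, $\dd e_l=\Theta(e_l)\wedge e_u$, $\dd e_n=\Theta(e_n)\wedge e_u$ together with the cohomological condition $[\Theta(e_u)]=0\in H^1(\Sigma,\mathbb{R})$; hence $(\fre,\Theta)$ is a parallel Cauchy pair.

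For the \emph{if} direction, suppose a global orthonormal coframe $\fre$ on $\Sigma$ is given with $(\fre,\Theta)$ a parallel Cauchy pair, so that in particular $h=h_{\fre}$. Since by hypothesis $(\Sigma,h,\Theta)$ is initial vacuum data it is admissible to the vacuum Einstein flow, and being a parallel Cauchy pair it is admissible to the parallel spinor flow, so the data is admissible to both problems. Solving the parallel spinor flow of Theorem~\ref{thm:parallelspinorflow} with initial data $(\fre,\Theta)$, whose well-posedness is guaranteed by \cite{Lischewski:2015cya,LeistnerLischewski}, yields a globally hyperbolic Lorentzian four-manifold $(M,g)$ carrying a parallel spinor and containing $\Sigma$ as a Cauchy hypersurface with induced metric $h_{\fre}=h$ and second fundamental form $\Theta$, i.e.\ a Lorentzian development of $(\Sigma,h,\Theta)$; by Theorem~\ref{thm:preservingconstraints} this development is Ricci flat, which concludes the argument. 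The step requiring the most care is the identification $h=h_{\fre}$ together with the globality of $\fre$ in the \emph{only if} direction: both follow from the fact that the polyform associated to a parallel---hence constant norm and nowhere vanishing---spinor is global and non-degenerate, and from the normalization conventions of \cite{Cortes:2019xmk, Murcia:2020zig}, so that no local patching of coframes is required.
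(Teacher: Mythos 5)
Your proposal is correct and follows essentially the same route as the paper: the \emph{only if} direction via restriction of the parallel spinor to the Cauchy hypersurface (the content of Theorem \ref{thm:parallelspinorflow}), and the \emph{if} direction via the well-posedness results of Lischewski and Leistner--Lischewski combined with Theorem \ref{thm:preservingconstraints}. The only elided step is the very last one: Theorem \ref{thm:preservingconstraints} by itself only yields that the Hamiltonian and momentum constraints vanish for all $t$, and Ricci flatness of the development then follows from the curvature formula $\ric^g=\tfrac{1}{2}\,\cH\, e^{-2\frf_t}\, u\otimes u$ of Proposition \ref{prop:ricci}, which the paper invokes explicitly at this point.
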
  

\noindent
By the well-known local equivalence between \emph{pp-waves} \cite{Mars} and Lorentzian four-manifolds admitting a parallel spinor \cite{EhlersKundt}, the previous corollary provides the first initial data characterization of Ricci-flat pp-waves. More precisely:

\begin{corollary}
An initial vacuum data $(\Sigma,h,\Theta)$ admits a pp-wave Ricci flat Lorentzian development if and only if  $(\Sigma,h)$ admits a parallel Cauchy pair.
\end{corollary} 

\noindent
We also obtain the following corollary.

\begin{corollary}
A globally hyperbolic Lorentzian four-manifold $(M,g)$ admitting a parallel spinor is Ricci flat if and only if there exists a Cauchy hypersurface $\Sigma\subset M$ whose Hamiltonian constraint vanishes.
\end{corollary}

\noindent 
Another important feature of parallel spinorial flows is that they admit a canonical notion of \emph{left-invariance} in terms of which we can define the notion of left-invariance of parallel spinors. The classification of left-invariant admissible initial data on simply connected three-dimensional Lie groups was completed in \cite{Murcia:2020zig}. We elaborate on this result to obtain the classification of all associated left-invariant parallel spinor flows. As expected, the result strongly depends on the initial data $(\fre,\Theta)$ chosen. Given a parallel Cauchy pair $(\fre,\Theta)$ write:
\begin{equation*}
\Theta = \Theta_{ab}\, e_a\otimes e_b\, ,\qquad a, b = u, l, n\, , \qquad \fre = (e_u , e_l , e_n)\, ,
\end{equation*}

\noindent
and define: 
\begin{equation*}
\lambda := \sqrt{\Theta_{ul}^2 + \Theta_{un}^2}\, , \quad \theta := \begin{pmatrix}
	\tll & \tln \\
	\tln & \tnn
\end{pmatrix} \, , \quad T := \mathrm{Tr}(\theta)\, ,  \quad \Delta := \mathrm{Det}(\theta)\, .
\end{equation*}

\noindent
In Section \ref{sec:leftinvariantspinorflow} we prove the following classification result.

\begin{theorem}
\label{thm:hflows}
Let $\{\beta_t, \fre^t\}_{t \in \cI}$ be a left-invariant parallel spinor on a simply-connected Lie group $\G$. Denote by $\left\{ h_{\fre^t} \right\}_{t\in\cI}$ the family of Riemannian metrics associated to $\{\beta_t, \fre^t\}_{t \in \cI}$ and by $(\fre,\Theta)$ its initial parallel Cauchy pair.  
\begin{itemize}[leftmargin=*]
\item If $\tul^2 + \tun^2 = 0$ and $\tuu \neq 0$ then:
\begin{equation*}
h_{\fre^t} =\left (1-\tuu \mathcal{B}_t \right )^2 e_u \otimes e_u 
  + (e_l,e_n)\, Q
	\begin{pmatrix}
		\left[1- \Theta_{uu}\cB_t \right]^{2\rho_{+}} & 0 \\
		0 &  \left[1- \Theta_{uu}\cB_t \right]^{2\rho_{-}}
	\end{pmatrix}  Q^{\ast} \begin{pmatrix}
		e_l \\
		e_n  
	\end{pmatrix}
 \, .
\end{equation*}

\noindent
where $\rho_{\pm}=\frac{T \pm \sqrt{T^2- 4\Delta}}{2\Theta_{uu}}$ are the eigenvalues of $\theta/\Theta_{uu}$ and $Q$ is its orthogonal diagonalization matrix. In particular, $\G= \mathbb{R}^3$ if $\theta = 0$, $\G =\mathrm{E}(1,1)$ if $T=0$ and $\theta\neq 0$, $\G = \tau_2 \oplus \mathbb{R}$ if $T\neq 0$ and $\Delta = 0$ and $\G = \tau_{3,\mu}$ if $T\neq 0$ and $\Delta \neq 0$. The case $\tuu=0$ is obtained by taking the formal limit $\tuu \rightarrow 0$ in the previous expressions.
	
\item If $\tul=0$ but $\tun \neq 0$, we have:
\begin{eqnarray*}
& h_{\fre^t} =\left ( \left (1-\tuu \mathcal{B}_t \right )^2 \sec^2 [y_t]+\frac{\tuu^2}{\lambda^2}- \frac{2\tuu}{\lambda} \left (1-\tuu \mathcal{B}_t \right ) \mathrm{Tan}[y_t] \right  ) e_u \otimes e_u\\
& \left ( \frac{\tuu}{\tun} - \tun \mathcal{B}_t \sec^2[y_t] \left (1-\tuu \mathcal{B}_t 
 \right )- \frac{\lambda}{\tun}\mathrm{Tan}[y_t] \left (1- 2 \tuu\mathcal{B}_t\right  )  \right ) e_u \odot e_n\\
&+e_l \otimes e_l+ \left ( 1 + \lambda^2 \mathcal{B}_t^2 \sec^2[y_t] + 2\,\mathcal{B}_t \lambda \mathrm{Tan}[y_t]\right ) e_n \otimes e_n\, ,
\end{eqnarray*}

\noindent
where $y_t = \lambda \, \cB_{t} + \mathrm{Arctan} \left[ \frac{\tuu}{\lambda}\right]$. In particular $\G = \tau_{2} \oplus \mathbb{R}$. The case $\tul \neq 0$ but $\tun=0$ is obtained by just exchanging the subindices $l$ and $n$ in the previous expression.

\item If $\tul \tun \neq 0$ then:
\begin{eqnarray*}
& h_{\fre^t} = \left ( (1+ T\, \mathcal{B}_t)^2+ \left (\mathrm{Tan}[y_t] (1+ T \mathcal{B}_t) + \frac{T}{\lambda}\right )^2 \right ) e_u \otimes e_u\\
& -\left ( \frac{T}{\lambda^2}+\frac{\mathrm{Tan}[y_t]}{\lambda}(1+2T \cB_t)+ \mathcal{B}_t (1+ T \mathcal{B}_t))  \sec^2 [y_t] \right) (\tul e_u \odot e_l +\tun e_u \odot e_n) \\
& + \left ( 1+ \tul^2 \mathcal{B}_t  \left (\mathcal{B}_t \sec^2 [y_t] +\frac{2 \mathrm{Tan} [y_t]}{\lambda} \right ) \right )e_l \otimes e_l  +\tul \tun \mathcal{B}_t \sec^2[y_t]\left ( \mathcal{B}_t +\frac{\sin[2y_t]}{\lambda} \right ) e_l \odot e_n \\ 
& + \left ( 1+ \tun^2 \mathcal{B}_t  \left (\mathcal{B}_t \sec^2 [y_t] + \frac{2 \mathrm{Tan}[y_t]}{\lambda} \right ) \right )e_n \otimes e_n\, ,
\end{eqnarray*}
		
\noindent
In particular, $\G = \tau_{2} \oplus \mathbb{R}$.
\end{itemize}

\noindent
Furthermore, if $\lambda = 0$ the flow is globally defined (namely $\cI= \mathbb{R}$) if and only if $\int_0^{\infty} \beta_{\tau}\dd\tau < \vert \Theta_{uu}^{-1}\vert$, whereas if $\lambda \neq 0$ the flow is globally defined if and only if $\vert y_t \vert < \frac{\pi}{2}$ $\forall t \in \mathbb{R}$. 
\end{theorem}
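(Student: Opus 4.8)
The plan is to use left-invariance to reduce the parallel spinor flow equations \eqref{eq:globhyperspinorflowI}--\eqref{eq:globhyperspinorflowII} to an autonomous system of ordinary differential equations, to integrate that system explicitly in each of the three regimes singled out by the structure of the off-diagonal data $\Theta_{ul},\Theta_{un}$ (equivalently, by $\lambda$), and finally to read the maximal interval of existence off the resulting closed-form expressions.

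\emph{Reduction to an ODE system.} If the initial parallel Cauchy pair $(\fre,\Theta)$ is left-invariant, then by uniqueness for the parallel spinor flow the whole family $\{\beta_t,\fre^t\}_{t\in\cI}$ remains left-invariant: $\beta_t$ is a function of $t$ alone, each $\fre^t$ is a left-invariant coframe, so $\fre^t = A_t\,\fre$ for a path $A_t$ in $\Gl(3,\RR)$ with $A_0=\Id$, and the associated second fundamental form $\Theta_t$ of the slice is left-invariant as well. By Theorem \ref{thm:preservingconstraints} the pair $(\fre^t,\Theta_t)$ stays a parallel Cauchy pair, so $\dd\fre^t$ is always given by $\dd e_u^t=\Theta_t(e_u^t)\wedge e_u^t$, $\dd e_l^t=\Theta_t(e_l^t)\wedge e_u^t$, $\dd e_n^t=\Theta_t(e_n^t)\wedge e_u^t$. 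Substituting the left-invariant ansatz into \eqref{eq:globhyperspinorflowI}--\eqref{eq:globhyperspinorflowII} then produces a coupled ODE system for $(A_t,\Theta_t)$ in which $\beta_t$ enters only through the antiderivative $\cB_t\defeq\int_0^t\beta_\tau\,\dd\tau$ (so $\dot\cB_t=\beta_t$, $\cB_0=0$); this explains why the final metrics depend on $\beta$ only via $\cB_t$.

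\emph{Integration.} The distinguished null direction $e_u$ evolves autonomously, so I would solve first for $e_u^t$ and then for the transverse pair $(e_l^t,e_n^t)$. When $\lambda=\sqrt{\Theta_{ul}^2+\Theta_{un}^2}=0$ the $e_u$-equation is scalar and linear, giving $e_u^t=(1-\Theta_{uu}\cB_t)\,e_u$, while the transverse block evolves through the constant matrix $\theta$; diagonalizing $\theta$ by the orthogonal matrix $Q$ produces the eigenvalues $\Theta_{uu}\rho_\pm$ and the power-law factors $[1-\Theta_{uu}\cB_t]^{2\rho_\pm}$, the degenerate case $\Theta_{uu}=0$ being the formal limit $\Theta_{uu}\to 0$. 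When $\lambda\neq 0$ the components $\Theta_{ul},\Theta_{un}$ couple $e_u$ to $e_l,e_n$ and the system becomes a planar linear ODE whose fundamental solution is built from $\cos$ and $\sin$ of the angle $y_t=\lambda\cB_t+\arctan(\Theta_{uu}/\lambda)$; expanding and regrouping gives the metrics of the second and third bullets, which differ only in whether one or both of $e_l,e_n$ are dragged along by $e_u$. In each regime the structure constants of the resulting left-invariant coframe, read off from the parallel Cauchy equations above, are matched against Milnor's classification of three-dimensional Lie algebras together with the list of admissible $\theta$ from \cite{Murcia:2020zig}, identifying $\G$ as $\RR^3$, $\mathrm{E}(1,1)$, $\tau_2\oplus\RR$ or $\tau_{3,\mu}$ according to $T=\mathrm{Tr}(\theta)$ and $\Delta=\mathrm{Det}(\theta)$.

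\emph{Maximal interval of existence, and the main obstacle.} The flow is defined exactly as long as $h_{\fre^t}$ is positive definite, that is, as long as $A_t$ is invertible. For $\lambda=0$ the determinant of $A_t$ is a nonzero multiple of a power of $1-\Theta_{uu}\cB_t$, so the only obstruction is $\cB_t=\Theta_{uu}^{-1}$; since $\beta_t>0$ the function $t\mapsto\cB_t$ is strictly monotone, and requiring that it never reach $\Theta_{uu}^{-1}$ yields immortality if and only if $\int_0^\infty\beta_\tau\,\dd\tau<|\Theta_{uu}^{-1}|$. For $\lambda\neq 0$ the metric coefficients carry factors $\sec^2[y_t]$, which diverge precisely when $y_t$ attains $\pm\pi/2$, and one checks that $A_t$ does not degenerate for smaller $|y_t|$; hence immortality amounts to $|y_t|<\pi/2$ for all $t\in\RR$. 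The genuine work is the \emph{Integration} step in the coupled regime $\lambda\neq 0$: one has to track the coframe and $\Theta_t$ simultaneously, verify that the $y_t$-trigonometric ansatz closes the system, and carry out the lengthy (but mechanical) expansion reproducing the exact coefficients displayed; a secondary point is to confirm in the last step that no earlier degeneration of $A_t$ precedes $y_t=\pm\pi/2$.
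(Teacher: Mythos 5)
Your proposal is correct and follows essentially the same route as the paper: left-invariance reduces the flow to ODEs for the shape-operator components (the integrability conditions of Lemma \ref{lemma:intecondli}, solved by the power-law and $\mathrm{Tan}[y_t]$-type formulas of Lemmas \ref{lemma:Thetaqdiagonal} and \ref{lemma:Thetageneral}) coupled with the linear system \eqref{eq:ptu} for the coframe matrix, which the paper integrates case by case in Propositions \ref{prop:liqd}, \ref{prop:linocero} and \ref{prop:tultunneq0} before assembling $h_{\fre^t}$, identifying $\G$ via Theorem \ref{thm:allcauchygroups} and reading the maximal interval of definition off the closed-form expressions (Remarks \ref{rem:maxintquasi} and \ref{rem:maxintnonquasi}), exactly the steps you outline. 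One small correction that does not affect the argument: the persistence of the parallel Cauchy system along the flow is built into the parallel spinor flow equations themselves (and, in the converse direction, is what Proposition \ref{prop:bijectionsolutions} verifies via the linear ODE for $\mathfrak{w}^t$), not a consequence of Theorem \ref{thm:preservingconstraints}, which concerns the vacuum Hamiltonian and momentum constraints.
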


\noindent
The notation used above to label simply-connected three-dimensional Lie groups is standard and can be found in \cite[Appendix A]{Freibert}. The previous theorem yields the families of three-dimensional Riemannian metrics that are canonically associated to left-invariant parallel spinor flows. For each such family $\left\{ h_{\fre^t}\right\}_{t\in \cI}$ we obtain a globally hyperbolic Lorentzian four-manifold carrying parallel spinors as follows:
\begin{equation*}
(M,g) = (\cI\times \Sigma, -\beta_t^2 \dd t^2 + h_{\fre^t})\, .
\end{equation*}

\noindent
These globally hyperbolic four-manifolds may or may not be Ricci flat depending on the case considered as explained in Section \ref{sec:leftinvariantspinorflow}. 

The curvature of the families of metrics $\left\{ h_{\fre^t}\right\}_{t\in \cI}$ can be explicitly computed on a case by case basis and a direct inspection of the result reveals that appropriately choosing $\Theta$ we can recover a particular example of a left-invariant Ricci flow as well as a family of $\eta\,$-Einstein cosymplectic structures. It would be interesting to explore the relation between spinorial flows in Lorentzian signature and weakly parabolic flows in Riemannian geometry in more generality and for more complicated types of spinorial equations, especially for those appearing as Killing spinor equations in four-dimensional supergravity theories. Work in this direction is in progress. On the other hand, it is well-known \cite{Mars} that existence of a parallel and irreducible real spinor on $(M,g)$ is locally equivalent to the following curvature condition:
\begin{equation}
\label{eq:introcurvature}
\mathrm{Ric}^g = f\, u\otimes u\, ,
\end{equation}

\noindent
for a local function $f$ and light-like parallel one-form $u$. Therefore, the parallel spinor flow may potentially be locally equivalent to the evolution flow prescribed by equations \eqref{eq:introcurvature}. Such relation does not seem however to be straightforward, since the parallel spinor flow is of \emph{first order} whereas the evolution flow prescribed by \eqref{eq:introcurvature} is of \emph{second order}. This offers an alternative point of view on the parallel spinor flow as a convenient tool to access the evolution flow prescribed by \eqref{eq:introcurvature} in terms of a more transparent and easier to handle first order evolution flow which, intuitively speaking, provides a \emph{first integral} of the evolution flow defined by \eqref{eq:introcurvature}. 

The outline of this paper is as follows. In Section \ref{sec:Spinors4d} we review the theory of parallel spinors in terms of parabolic pairs and we use it to characterize all standard Brinkmann space-times admitting parallel spinors, obtaining a global result in Proposition \ref{prop:standardBrinkmann} that seems to be new in the literature. Section \ref{sec:GlobhyperSpinors4d} is devoted to the description of parallel spinors on globally hyperbolic Lorentzian four-manifolds as parallel spinor flows on an appropriately chosen Cauchy surface. We prove the compatibility of the parallel spinor flow with the vacuum Einstein flow and we obtain an \emph{initial data} characterization of parallel spinors on Ricci flat Lorentzian four-manifolds. Finally, in Section \ref{sec:leftinvariantspinorflow} we classify all left-invariant parallel spinor flows on simply connected three-dimensional Lie groups and we elaborate on some of their properties. 


\subsection*{Acknowledgements/Funding}


We would like to thank Vicente Cort\'es and Miguel S\'anchez for very interesting discussions and comments. The work of \'A.M. was  funded by the Spanish FPU Grant No. FPU17/04964, by the Deutscher Akademischer Austauschdienst (DAAD), through the Short-Term Research Grant No. 91791300, and by the Istituto Nazionale di Fisica Nucleare (INFN) through the INFN Call No. 23590. \'A.M. received additional support from the MCIU/AEI/FEDER UE grant PGC2018-095205-B-I00 and the Centro de Excelencia Severo Ochoa Program grant SEV-2016-0597. The work of C.S.S. was supported by the Germany Excellence Strategy \emph{Quantum Universe} - 390833306.
 
\subsection*{Author contribution}

All authors contributed equally.

\subsection*{Conflicts of interest/Competing interests statement}

The authors have no conflicts of interest to declare that are relevant to the content of this article.

\subsection*{Data Availability Statement}

Data sharing not applicable to this article as no datasets were generated or analysed during the current study.


\section{Parallel spinors on Lorentzian four-manifolds}
\label{sec:Spinors4d}


In this section we first review the theory of parallel spinors on globally hyperbolic Lorentzian four-dimensional manifolds and, afterwards, we introduce the notion of standard Brinkmann spacetimes and study when they admit a parallel spinor.


\subsection{General theory}
\label{subsec:GeneralSpinors4d}

We begin by briefly presenting the theory of parallel spinors on globally hyperbolic Lorentzian four-dimensional manifolds as developed in \cite{Cortes:2019xmk,Murcia:2020zig}, where parallel spinors were described in terms of a specific type of distribution satisfying a certain system of partial differential equations. Let $(M,g)$ be a four-dimensional \emph{space-time}, i.e. a connected, oriented and time oriented four-manifold endowed with a Lorentzian metric $g$. We assume that $(M,g)$ admits a bundle of irreducible real spinors $\mathrm{S}_g$. The existence of such $\mathrm{S}_g$ is equivalent \cite{LazaroiuShahbazi,Lazaroiu:2017zpl,Lazaroiu:2018igs} to the existence of a spin structure $Q_g$, in which case $\mS_g$ can be conceived as the vector bundle associated to $Q_g$ via the tautological representation given by the canonical embedding $\mathrm{Spin}_{+}(3,1)\subset \Cl(3,1)$, where $\mathrm{Spin}_{+}(3,1)$ stands for the connected component of the identity of the spin group in signature $(3,1) = -+++$ and $\Cl(3,1)$ denotes the real Clifford algebra in the aforementioned signature. Therefore, we will assume that $(M,g)$ is spin and endowed with a fixed spin structure $Q_g$. Under these conditions, the Levi-Civita connection $\nabla^g$ on $(M,g)$ induces naturally a connection on $\mS_g$, the \emph{spinorial} Levi-Civita connection, which for the sake of simplicity is denoted by the same symbol. 

\begin{definition}
A \emph{spinor} $\varepsilon$ on $(M,g,\mS_g)$ is a smooth section $\varepsilon\in \Gamma(\mS_g)$ of $\mS_g$. It is \emph{parallel} if $\nabla^g\varepsilon = 0$.
\end{definition}

\noindent
Let $u\in \Omega^1(M)$ be a luminous one-form. We declare two one-forms $l_1 , l_2 \in \Omega^1(M)$ to be equivalent through the equivalence relation $\sim_u$, $l_1 \sim_u l_2$, if and only if $l_1 = l_2 + f u$ with $f\in C^{\infty}(M)$. We denote by:
\begin{equation*}
\Omega^1_{u}(M) := \frac{\Omega^1(M)}{\sim_u}\, ,
\end{equation*}

\noindent
the $C^\infty(M)$-module of equivalence classes defined by $\sim_u$.

\begin{definition}
A parabolic pair $(u,[l])$ on $(M,g)$ is conformed by a nowhere vanishing null one-form $u\in \Omega^1(M)$ and an equivalence class of one-forms $[l]\in \Omega^1_u(M)$ such that:
\begin{equation*}
g(l,u) =  0\, , \qquad g(l,l) = 1\, ,
\end{equation*}
	
\noindent
for any, and hence, for all, representatives $l\in [l]$.
\end{definition}

\noindent
The one-form $u$ is usually called the \emph{Dirac current} of the spinor $\varepsilon$. From \cite[Theorems 4.26 and 4.32]{Cortes:2019xmk}, it is possible to characterize parallel spinors on $(M,g)$ in terms of differential equations for parabolic pairs. 
\begin{proposition}
\label{prop:parallelspinorgeneral}
A space-time $(M,g)$ admits a parallel spinor $\varepsilon\in \Gamma(\mS_g)$ if and only if there exists a parabolic pair $(u,[l])$ on $(M,g)$ satisfying:
\begin{equation}
\label{eq:parallelspinorgeneral}
\nabla^g u = 0\, , \qquad \nabla^g l = \kappa\otimes u\, ,
\end{equation}
	
\noindent
where $\kappa \in \Omega^1(M)$.  
\end{proposition}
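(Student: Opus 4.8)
The plan is to translate between the spinorial language and the polyform language using the results of \cite{Cortes:2019xmk}, and then reformulate the algebraic constraint defining a parabolic pair together with the first-order parallelism equation. First I would recall that, by \cite[Theorems 4.26 and 4.32]{Cortes:2019xmk}, a parallel spinor $\varepsilon \in \Gamma(\mS_g)$ is equivalent to its associated \emph{spinorial polyform}, which in the four-dimensional case decomposes into a null one-form $u$ (the Dirac current, algebraically determined by $\varepsilon$ up to the sign/scale ambiguity intrinsic to the real spinor) together with a higher-degree component encoding the transverse data, and that the parallelism condition $\nabla^g \varepsilon = 0$ is equivalent to the closure of the full polyform under $\nabla^g$. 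The point is to unwind exactly which scalar/vectorial equations this polyform condition produces in signature $(3,1)$.

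The key steps, in order, would be: (i) Show that the degree-one part of the spinorial polyform of a parallel spinor is a parallel null one-form $u$; this is where $\nabla^g u = 0$ comes from, and it also forces $u$ to be nowhere vanishing since $\varepsilon$ is nowhere vanishing and $u$ is built algebraically from $\varepsilon$ without kernel on the relevant spinor orbit. (ii) Extract from the remaining components of the polyform a transverse spacelike direction; because the stabilizer of a pure/irreducible real spinor in $\mathrm{Spin}_+(3,1)$ is a parabolic subgroup whose unipotent part acts by translations $l \mapsto l + f u$, the transverse direction is only well-defined as an equivalence class $[l] \in \Omega^1_u(M)$, and the normalization conditions $g(l,u) = 0$, $g(l,l) = 1$ are precisely the algebraic constraints cutting out the spinor orbit — this is the definition of a parabolic pair. (iii) Differentiate a chosen representative $l \in [l]$: the polyform parallelism equation, once $\nabla^g u = 0$ is used, says that $\nabla^g l$ lies in the line spanned by $u$ at each point when read modulo the $\sim_u$-ambiguity, i.e. $\nabla^g l = \kappa \otimes u$ for some $\kappa \in \Omega^1(M)$. (iv) Conversely, given a parabolic pair $(u,[l])$ satisfying \eqref{eq:parallelspinorgeneral}, reconstruct the spinor: the pair determines a reduction of the frame bundle to the parabolic stabilizer subgroup, hence a parallel section of the associated spinor bundle $\mS_g$, which gives the desired $\varepsilon$.

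The main obstacle I expect is step (ii)–(iii): carefully tracking the $\sim_u$ equivalence so that the statement is representative-independent. One must check that the conditions $g(l,u)=0$, $g(l,l)=1$ are preserved under $l \mapsto l + fu$ (immediate since $u$ is null and $g(l,u)=0$), and that the form of the parallelism equation $\nabla^g l = \kappa \otimes u$ is stable under this replacement: replacing $l$ by $l + fu$ changes $\kappa$ by $\dd f$ (using $\nabla^g u = 0$), which stays within the allowed class of one-forms, so the \emph{existence} of such a $\kappa$ is well-posed on $[l]$. The other delicate point is making sure the orbit structure of irreducible \emph{real} spinors in Lorentzian signature $(3,1)$ genuinely yields a parabolic (rather than compact or reductive) stabilizer, so that the unipotent translations — and hence the equivalence class rather than an honest one-form — are unavoidable; but this is exactly the content invoked from \cite{Cortes:2019xmk}, so for our purposes it can be cited rather than reproved. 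Everything else is bookkeeping with Clifford multiplication and the explicit form of the polyform bilinear covariants.
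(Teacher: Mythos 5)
Your proposal is correct and follows essentially the same route as the paper, which deduces the proposition directly from \cite[Theorems 4.26 and 4.32]{Cortes:2019xmk}: the spinor determines (and is determined by, up to sign) the distribution of co-oriented parabolic two-planes encoded in the parabolic pair $(u,[l])$, and the parallelism condition translates into Equations \eqref{eq:parallelspinorgeneral}. Your additional check that the equation $\nabla^g l = \kappa\otimes u$ is stable under $l\mapsto l+fu$ (with $\kappa\mapsto\kappa+\dd f$, using $\nabla^g u=0$) is exactly the representative-independence the paper leaves implicit.
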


\begin{remark}
Specifically, \cite[Theorem 4.26]{Cortes:2019xmk} states the equivalence between particular classes of first-order partial differential equations for $\varepsilon$ and certain systems of partial differential equations for $(u,[l])$, of which Equations \eqref{eq:parallelspinorgeneral} represent their simplest case. 
\end{remark}

\noindent
A parabolic pair $(u,[l])$ is \emph{parallel} if Equations \eqref{eq:parallelspinorgeneral} are satisfied for a representative $l\in [l]$.


\subsection{Standard Brinkmann space-times}


In order to illustrate the various uses of Proposition \ref{prop:parallelspinorgeneral} and make contact with the existing literature, in this subsection we recover the well-known local characterization of a Lorentzian four-manifold $(M,g)$ admitting a parallel spinor, obtaining along the way the global characterization of \emph{standard} Brinkmann space-times that admit a parallel spinor, which seems to be new in the literature. Recall that by definition a \emph{Brinkmann} space-time \cite{Brinkmann,Mars} is a Lorentzian four manifold equipped with a parallel null vector. Let $(u,[l])$ be a parallel parabolic pair on $(M,g)$, which by Proposition \ref{prop:parallelspinorgeneral} is equivalent to the existence of a parallel spinor. Since $u$ is parallel, $(M,g)$ is locally isometric to a Brinkmann space-time, whence it suffices to consider $(M,g)$ to be \emph{standard}, namely $M = \mathbb{R}^2\times X$ in terms of an oriented two-dimensional manifold $X$, equipped with the metric:
\begin{equation*}
g = H_{x_{u}} \dd x_u \otimes \dd x_u + \dd x_{u}\odot \alpha_{x_u}  + \dd x_u\odot \dd x_v + q_{x_u}\, .
\end{equation*} 

\noindent
where $(x_u,x_v)$ denotes the Cartesian coordinates of $\mathbb{R}^2$, and:
\begin{equation*}
\left\{ H_{x_{u}} \right\}_{x_{u}\in \mathbb{R}}\, , \quad \left\{\alpha_{x_u} \right\}_{x_{u}\in \mathbb{R}}\, , \quad \left\{ q_{x_u}\right\}_{x_{u}\in \mathbb{R}}\, ,
\end{equation*}

\noindent
respectively denote a family of functions, a family of one-forms and a family of complete Riemannian metrics on $X$ parametrized by $x_{u}\in \mathbb{R}$. The vector field $\partial_{x_v}$ is null and parallel, so $g(\partial_{x_v}) = \dd x_u$ is a null parallel one-form which we identify with $u$. We will refer to a parallel spinor $\varepsilon$ on a standard Brinkmann space-time as adapted if its Dirac current is $u^{\sharp_g} = \partial_{x_v}$. In such case, the first equation in \eqref{eq:parallelspinorgeneral} is automatically satisfied and we only need to be concerned with the second equation in \eqref{eq:parallelspinorgeneral}, namely:
\begin{equation*}
\nabla^g l = \kappa \otimes \dd x_u\, , \qquad l\in [l]\, , \qquad \kappa \in \Omega^1(M)\, ,
\end{equation*}

\noindent
which needs to be satisfied for a representative in $[l]$. This equation is equivalent to:
\begin{equation}
\label{eq:lsplit}
\dd l = \kappa\wedge \dd x_u\, , \qquad   \cL_{l^\sharp} g = \kappa \odot \dd x_u\, , 
\end{equation}

\noindent
where $l^{\sharp_g}$ denotes the metric dual of $l$ with respect to $g$. Using that $u = \dd x_u$ and $g^{-1}(l,u) =l(\partial_{x_v}) = 0$, it follows that there exists a representative $l\in [l]$ of the form:
\begin{equation*}
l =  l^{\perp}\, ,
\end{equation*}

\noindent
where $l^{\perp} $ denotes a bi-parametric family of unit-norm one-forms on $X$ parametrized by $(x_u,x_v) \in \mathbb{R}^2$. The first equation in \eqref{eq:lsplit} is equivalent to:
\begin{equation*}
\kappa_v \,\dd x_u \wedge \dd x_v - \partial_{x_v} l^{\perp} \wedge \dd x_v - (\kappa^{\perp} + \partial_{x_u} l^{\perp} ) \wedge \dd x_u +  \dd_X l^{\perp}  = 0\, ,
\end{equation*}	
 
\noindent
where we have written $\kappa = \kappa_u \dd x_u + \kappa_v \dd x_v + \kappa^{\perp}$ and $\dd_X$ denotes the exterior derivative operator on $X$. The general solution to the previous equation reads:
\begin{equation*}
\kappa_v = 0\, , \qquad  \partial_{x_v} l^{\perp}= 0\, , \qquad \kappa^{\perp} = - \partial_{x_u} l^{\perp}\, , \qquad \dd_X l^{\perp} = 0\, .
\end{equation*}

\noindent
Therefore, since $\partial_{x_v} l^{\perp}= 0$, the one-form $l^{\perp}$ is equivalent to a family $\left\{\ell_{x_u}\right\}_{x_u\in\mathbb{R}}$ of unit-norm one-forms on $X$. Recall now that the dual $l^{\sharp}$ of $l$ with respect to $g$ is given by the following expression:
\begin{equation*}
l^{\sharp} =  - \alpha_{x_u}(\ell^{\sharp_{q}}_{x_u}) \, \partial_{x_v} + \ell^{\sharp_{q}}_{x_u}  
\end{equation*}

\noindent
where the symbol $\sharp_{q}$ denotes musical isomorphism with respect to the metric $q_{x_u}$. We use this equation to expand the Lie derivative of $g$ along $l^{\sharp}$ as follows:
\begin{eqnarray*}
& \cL_{l^\sharp}g = \dd H_{x_u}(\ell^{\sharp_{q}}_{x_u}) \,\dd x_u\otimes \dd x_u  + \dd x_u \odot ( \alpha_{x_u}(\partial_{x_u}\ell^{\sharp_{q}}_{x_u}) \dd x_u + \cL_{\ell^{\sharp_{q}}_{x_u}}^X \alpha_{x_u})  - \dd x_u\odot \dd(\alpha_{x_u}(\ell^{\sharp_{q}}_{x_u}))  + \cL_{\ell^{\sharp_{q}}_{x_u}}^X q_{x_u}
\end{eqnarray*}

\noindent
where $\cL^X$ denotes the Lie derivative on the surface $X$ and where we have used:
\begin{eqnarray*}
& \cL_{l^\sharp} \dd x_u = 0\, , \quad \cL_{l^\sharp} \dd x_v = - \dd (\alpha_{x_u}(\ell^{\sharp_{q}}_{x_u})) = - \partial_{x_u} (\alpha_{x_u}(\ell^{\sharp_{q}}_{x_u})) \dd x_u - \dd_X (\alpha_{x_u}(\ell^{\sharp_{q}}_{x_u}))\, , \\
& \cL_{l^\sharp} \alpha_{x_u} = \alpha_{x_u}(\partial_{x_u}\ell^{\sharp_{q}}_{x_u}) \dd x_u + \cL_{\ell^{\sharp_{q}}_{x_u}}^X \alpha_{x_u}\, , \quad  \cL_{l^\sharp} q_{x_u} =  \cL_{\ell^{\sharp_{q}}_{x_u}}^X q_{x_u}+\dd x_u \odot (\partial_{x_u} \ell_{x_u}- (\partial_{x_u} q_{x_u})(\ell_{x_u}^{\sharp_{q}})) \, .
\end{eqnarray*}

\noindent
Hence, the second equation in \eqref{eq:lsplit} is equivalent to:
\begin{eqnarray}
\label{eq:standardax1}
& \kappa_u = \frac{1}{2} (\dd H_{x_u}(\ell_{x_u}) - (\partial_{x_u} \alpha_{x_u})(\ell_{x_u}))\, , \qquad  \nabla^{q_{x_u}} \ell_{x_u} = 0\, ,\\
& 2\partial_{x_u} \ell_{x_u}-(\partial_{x_u} q_{x_u})((\ell_{x_u})^{\sharp_{q}}) + \ell_{x_u} \lrcorner \,\dd_X \alpha_{x_u} = 0\, ,
\label{eq:standardax2}
\end{eqnarray}

\noindent
where we have used that $\dd_X \ell_{x_u} =0$ and where $\nabla^{q_{x_u}}$ denotes the Levi-Civita connection of the Riemannian metric $q_{x_u}$ on $X$. The first equation in \eqref{eq:standardax1} simply determines $\kappa_u$ in terms of the underlying metric and the given adapted parabolic pair, whereas the second one establishes that $\ell_{x_u}$ is a nowhere vanishing parallel one-form with respect to $q_{x_u}$ whence we obtain a parallel global orthonormal frame $\{ \ell_{x_u}, n_{x_u}\}_{x_u\in \mathbb{R}}$ on $X$ by setting:
\begin{equation*}
n_{x_u} = \ast_{q_{x_u}} \ell_{x_u}\, .
\end{equation*}

\noindent
In particular, $(X,q_{x_u})$ is a flat Riemann surface for every $x_u\in \mathbb{R}$ and therefore isometric to either the euclidean plane, a flat cylinder or a flat torus. Projecting Equation \eqref{eq:standardax2} along $\ell_{x_u}$ we obtain an identity, whereas projecting along $n_{x_u}$ we obtain that Equation \eqref{eq:standardax2} is equivalent to:
\begin{equation*}
\dd_X \alpha_{x_u}  =  \partial_{x_u}n_{x_u} \wedge n_{x_u}  + \partial_{x_u} \ell_{x_u} \wedge \ell_{x_u} \, .
\end{equation*}

\noindent
All together, the previous discussion proves the following result.

\begin{proposition}
\label{prop:standardBrinkmann}
A standard Brinkmann space-time admits an adapted parallel spinor if and only if it is isometric to the following model:
\begin{equation*}
(M,g) = (\mathbb{R}^2\times X, H_{x_{u}} \dd x_u \otimes \dd x_u + \dd x_{u}\odot \alpha_{x_u}  + \dd x_u\odot \dd x_v + q_{x_u})\, ,
\end{equation*}

\noindent
and there exists a family of parallel orthonormal coframes $\left\{ \ell_{x_u} , n_{x_u} \right\}_{x_u\in \mathbb{R}}$ on $(X,q_{x_u})$ such that:
\begin{equation*}
\dd_X \alpha_{x_u}  =  \partial_{x_u}n_{x_u} \wedge n_{x_u}  + \partial_{x_u} \ell_{x_u} \wedge \ell_{x_u}\, .
\end{equation*}

\noindent
In particular, $\left\{ q_{x_u}\right\}_{x_u\in \mathbb{R}}$ is a family of complete flat metrics on $X$.  
\end{proposition}

\begin{remark}
Note that the family of functions $\left\{H_{x_u}\right\}_{x_u\in \mathbb{R}}$ does not play any role regarding the existence of an adapted parallel spinor on a standard Brinkmann space-time.
\end{remark}

\noindent
By uniformization, we conclude that $X$ is diffeomorphic to either $\mathbb{R}^2$, $\mathbb{R}^2\backslash\left\{ 0\right\}$ or $T^2$. Appropriately choosing local coordinates the previous result immediately implies that a four-dimensional space-time admitting parallel spinors is locally isometric to a \emph{pp-wave} \cite{EhlersKundt,Mars}, defined as a Brinkmann space for which the Ricci tensor takes the form \eqref{eq:introcurvature}. Note however that  Proposition \eqref{prop:standardBrinkmann} is more general than this local equivalence since Proposition \eqref{prop:standardBrinkmann} is a global result that provides a classification of the topology type of manifolds of the form $M=\mathbb{R}^2\times X$ that admit adapted parallel spinors and gives a global characterization of the corresponding Lorentzian metrics in terms of flow equations on $X$.

\begin{example}
\label{ep:Brinkmann}
Let $(M,g)$ be a simply-connected standard Brinkmann space-time admitting an adapted parallel spinor. Using the notation of Proposition \eqref{prop:standardBrinkmann}, choose families of constant functions $\left\{ c^1_{x_u} , c^2_{x_u}, f_{x_u} \right\}_{x_u\in\mathbb{R}}$ on $X = \mathbb{R}^2$ in terms of which we define the following families of one-forms on $X$:
\begin{equation*}
\ell_{x_u} = e^{c^1_{x_u}} \dd y_1 + f_{x_u} \dd y_2 \, , \qquad n_{x_u} = e^{c^2_{x_u}} \dd y_2\, , \qquad x_u\in \mathbb{R}\, ,
\end{equation*}

\noindent
where $(y_1,y_2)$ are the global Cartesian coordinates of $\mathbb{R}^2$. In particular:
\begin{equation*}
q_{x_u} = \ell_{x_u} \otimes \ell_{x_u} + n_{x_u}\otimes n_{x_u} = e^{2 c^1_{x_u}} \dd y_1 \otimes \dd y_1 + f_{x_u} e^{c^1_{x_u}} \dd y_1\odot \dd y_2 + (e^{2 c^2_{x_u}} + f^2_{x_u}) \dd y_2 \otimes \dd y_2\, .
\end{equation*} 

\noindent
A quick computation shows that:
\begin{equation*}
\partial_{x_u}\ell_{x_u} = \partial_{x_u} c^1_{x_u} \ell_{x_u} + (\partial_{x_u} f_{x_u} - \partial_{x_u} c^1_{x_u} f_{x_u}) e^{-c^2_{x_u}} n_{x_u}\, , \quad \partial_{x_u} n_{x_u} = \partial_{x_u}c^2_{x_u} e^{-c^2_{x_u}} n_{x_u}\, , \quad x_u\in \mathbb{R}\, ,
\end{equation*}

\noindent
whence:
\begin{equation*}
\dd_X \alpha_{x_u}  =  (\partial_{x_u} c^1_{x_u} f_{x_u} - \partial_{x_u} f_{x_u}) \ell_{x_u} \wedge \dd y_2\, .
\end{equation*} 

\noindent
Solutions to this equation can be easily found by direct inspection. A particular solution is given by:
\begin{equation*}
\alpha_{x_u} = (\partial_{x_u} f_{x_u} - \partial_{x_u} c^1_{x_u} f_{x_u})y_2\, \ell_{x_u}\, ,
\end{equation*}

\noindent
which gives the following metric $g$ on $\mathbb{R}^4$:
\begin{eqnarray*}
& g =   H_{x_{u}} \dd x_u \otimes \dd x_u + (\partial_{x_u} f_{x_u} - \partial_{x_u} c^1_{x_u} f_{x_u})\, y_2\, \dd x_{u}\odot (e^{c^1_{x_u}} \dd y_1 + f_{x_u} \dd y_2) + \dd x_u\odot \dd x_v\\
& + e^{2 c^1_{x_u}} \dd y_1 \otimes \dd y_1 + f_{x_u} e^{c^1_{x_u}} \dd y_1\odot \dd y_2 + (e^{2 c^2_{x_u}} + f^2_{x_u}) \dd y_2 \otimes \dd y_2\, .
\end{eqnarray*}

\noindent
This produces an explicit example of Lorentzian metric on $\mathbb{R}^4$ that admits parallel spinors and for which the \emph{crossed term} $\alpha_{x_u}$ is non-trivial and, in particular, not closed. 

\noindent
On the other hand, if $f_{x_u} = e^{c^1_{x_u}}$ we have $\dd_X \alpha_{x_u} =0$ and therefore $\alpha_{x_u} = \dd_X \mathfrak{o}_{x_u}$ for a family $\left\{\mathfrak{o}_{x_u}\right\}_{x_u\in \mathbb{R}}$ of functions on $\mathbb{R}^2$. Hence, in this case the metric $g$ can be written as follows:
\begin{eqnarray}
& g =  (H_{x_{u}} - 2 \partial_{x_u}\mathfrak{o}_{x_u}) \dd x_u \otimes \dd x_u + \dd x_{u}\odot \dd (\mathfrak{o}_{x_u}  +  x_v) \nonumber \\ 
& + e^{2 c^1_{x_u}} \dd y_1 \otimes \dd y_1 +  e^{2 c^1_{x_u}} \dd y_1\odot \dd y_2 + (e^{2 c^1_{x_u}} + e^{2 c^2_{x_u}}) \dd y_2 \otimes \dd y_2\, .
\label{eq:metricfinalexample}
\end{eqnarray}

\noindent
and therefore we can write locally:
\begin{equation*}
g =  \hat{H}_{x_{u}}  \dd x_u \otimes \dd x_u + \dd x_{u}\odot \dd \hat{x}_{v} + e^{2 c^1_{x_u}} \dd y_1 \otimes \dd y_1 +  e^{2 c^1_{x_u}} \dd y_1\odot \dd y_2 + (e^{2 c^1_{x_u}} + e^{2 c^2_{x_u}}) \dd y_2 \otimes \dd y_2\, ,
\end{equation*}

\noindent
where we have set $\hat{H}_{x_u} = H_{x_{u}} - 2 \partial_{x_u}\mathfrak{o}_{x_u}$ and $\hat{x}_v = \mathfrak{o}_{x_u}  +  x_v$.
\end{example}

\noindent
\begin{remark}
By applying a $x_u$-dependent family of diffeomorphisms $\left\{ f_{x_u}\right\}_{x_u\in \mathbb{R}}$ of $X$, we can remove the \emph{crossed-term} $\dd x_u\odot \alpha_{x_u}$ in the metric $g$ at least locally, obtaining a metric of the type:
\begin{equation*}
f^{\ast}_{x_u}g =  H_{x_{u}}\circ f_{x_u} \dd x_u \otimes \dd x_u + \dd x_u\odot \dd x_v + f_{x_u}^{\ast} q_{x_u}\, .
\end{equation*}

\noindent
However, in many situations it may not be convenient to implement this change of coordinates, since in general it will spoil any choice of coordinates in which the family of metrics $\left\{q_{x_u}\right\}_{x_u\in\mathbb{R}}$ adopts a simple form.
\end{remark}


\section{Globally hyperbolic case}
\label{sec:GlobhyperSpinors4d}

 
Let $(M,g)$ be a globally hyperbolic four-dimensional space-time. By the results of Bernal and S\'anchez \cite{Bernal:2003jb,Bernal:2004gm}, $(M,g)$ turns out to have the following isometry type:
\begin{equation}
\label{eq:globahyp}
(M,g) = (\cI\times \Sigma, -\beta^2_t \dd t\otimes \dd t + h_t)\, ,
\end{equation}

\noindent
where $t$ is the canonical coordinate on the interval $\cI\subset \mathbb{R}$, $\left\{\beta_t\right\}_{t\in \cI}$ is a smooth family of strictly positive functions on $\Sigma$ and $\left\{ h_t\right\}_{t\in \cI}$ is a family of complete Riemannian metrics on $\Sigma$. From now on we fix the identification \eqref{eq:globahyp} and we set:
\begin{equation*}
\Sigma_t := \left\{ t\right\}\times \Sigma \hookrightarrow M\, , \qquad \Sigma := \left\{ 0\right\}\times \Sigma \hookrightarrow M\, ,
\end{equation*}

\noindent
and define:
\begin{equation*}
	\mathfrak{t}_t = \beta_t\, \dd t\, ,
\end{equation*}

\noindent
to be the (outward-pointing) unit time-like one-form orthogonal to $T^{\ast}\Sigma_t$ for every $t\in \mathbb{R}$. We equip $\Sigma\hookrightarrow M$  with the induced Riemannian metric:
\begin{equation*}
	h := h_0\vert_{T\Sigma\times T\Sigma}\, ,
\end{equation*}

\noindent
and we consider $(\Sigma, h)$ to be the Cauchy hypersurface of $(M,g)$. Associated to each embedded manifold  $\Sigma_t\hookrightarrow M$ we have its \emph{shape operator} or scalar second fundamental form $\{\Theta_t\}_{t \in \mathcal{I}}$ which is defined in the usual way:
\begin{equation*}
	\Theta_t  := \nabla^g \mathfrak{t}_t\vert_{T\Sigma_t\times T\Sigma_t}\, ,
\end{equation*}
 
\noindent
or, equivalently:
\begin{equation*}
	\Theta_t = - \frac{1}{2\beta_t} \partial_t h_t \in \Gamma(T^{\ast}\Sigma_t\odot T^{\ast}\Sigma_t)\, .
\end{equation*}

\noindent
We will denote the shape operator of $\Sigma_t\subset M$ either by $\Theta_t$ or $\Theta^t$, depending on convenience. It can be checked that:
\begin{equation*}
\nabla^g \alpha \vert_{T\Sigma_t\times TM} = \nabla^{h_t} \alpha + \Theta_t(\alpha)\otimes \mathfrak{t}_t\, , \qquad \forall\,\,\alpha\in \Omega^1(\Sigma_t)\, ,
\end{equation*}

\noindent
where $\nabla^{h_t}$ is the Levi-Civita connection on $(\Sigma_t,h_t)$ and $\Theta_t(\alpha) := \Theta_t(\alpha^{\sharp_{h_t}})$ is defined as the evaluation of $\Theta_t$ on the metric dual of $\alpha$. If $(u,[l])$ is a parabolic pair, we set:
\begin{equation*}
	u = u^0_t\, \mathfrak{t}_t + u^{\perp}_t\, , \qquad l = l^0_t\, \mathfrak{t}_t + l^{\perp}_t\in [l]\, ,
\end{equation*}

\noindent
where the superscript $\perp$ indicates orthogonal projection to $T^{\ast}\Sigma_t$ and we have defined:
\begin{equation*}
u^0_t = - g(u,\mathfrak{t}_t)\, , \qquad l^0_t = - g(l,\mathfrak{t}_t)\, .
\end{equation*}

\noindent
Using this orthogonal splitting we describe parallel spinors on a globally hyperbolic space-time in terms of \emph{tensorial} flow equations on $\Sigma$. 

\begin{lemma}\cite[Lemma 2.6]{Murcia:2020zig}
\label{lemma:spinorglobal}
A globally hyperbolic four-manifold $(M,g)  = (\mathbb{R}\times \Sigma, -\beta^2_t \dd t\otimes \dd t + h_t)$ admits a parallel parabolic pair, and hence a parallel spinor field, if and only if there exists a family of orthogonal one-forms $\left\{ u^{\perp}_t, l^{\perp}_t \right\}_{t\in \cI}$ on $\Sigma$ satisfying the following equations:
\begin{eqnarray}
\label{eq:globalesp1}
& \partial_t u^{\perp}_t + \beta_t \Theta_t(u^{\perp}_t) = u^0_t\dd \beta_t\, , \qquad  u^0_t \partial_t l^{\perp}_t +\beta_t  u^0_t \Theta_t(l^{\perp}_t) + \dd \beta_t (l^{\perp}_t) u^{\perp}_t = 0\, ,   \\
& \nabla^{h_t} u^{\perp}_t + u^0_t \Theta_t = 0\, , \qquad u^0_t \nabla^{h_t} l^{\perp}_t = \Theta_t(l^{\perp}_t)\otimes u^{\perp}_t\, , \label{eq:globalesp2}
\end{eqnarray}
	
\noindent
as well as:
\begin{equation}
\label{eq:globalesp3}
(u^0_t)^2 = \vert u^{\perp}_t \vert^2_{h_t}\, , \qquad \vert l^{\perp}_t\vert^2_{h_t} = 1\, ,
\end{equation}
	
\noindent
In particular, $\partial_t u^0_t = \dd \beta_t(u^{\perp}_t)$ and $\dd u^0_t + \Theta_t(u^{\perp}_t) = 0$. If these equations are satisfied, the corresponding parabolic pair $(u,[l])$ is given by:
\begin{equation*}
u = u^0_t \mathfrak{t}_t +  u_t^{\perp}\, ,  \qquad  [l] = [ l_t^{\perp}]\, .
\end{equation*}
	
\noindent
where $\vert u_t^{\perp}\vert^2_{h_t} = h_t(u^{\perp}_t , u^{\perp}_t)$ and $\vert l_t^{\perp}\vert^2_{h_t} = h_t(l^{\perp}_t , l^{\perp}_t)$.
\end{lemma}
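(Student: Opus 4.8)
The plan is to read off \eqref{eq:globalesp1}--\eqref{eq:globalesp3} directly from the parabolic-pair characterization of Proposition \ref{prop:parallelspinorgeneral} by projecting the defining covariant equations onto the Cauchy slices. Recall that, by that proposition, $(M,g)$ admits a parallel spinor if and only if it admits a parallel parabolic pair $(u,[l])$: a nowhere-vanishing null one-form $u$ with $\nabla^g u=0$, a representative $l\in[l]$ with $g(l,u)=0$ and $g(l,l)=1$, and a one-form $\kappa$ with $\nabla^g l=\kappa\otimes u$. I would set $e_0:=\beta_t^{-1}\partial_t$ for the future unit normal, so $\mathfrak{t}_t(e_0)=1$ and $\mathfrak{t}_t^{\sharp_{g}}=-e_0$, and first record the normal-direction Gauss--Weingarten identities complementing the tangential formula $\nabla^g\alpha\vert_{T\Sigma_t\times TM}=\nabla^{h_t}\alpha+\Theta_t(\alpha)\otimes\mathfrak{t}_t$ quoted above; they follow from $g=-\beta_t^2\,\dd t\otimes\dd t+h_t$ and $\Theta_t=-\tfrac{1}{2\beta_t}\partial_t h_t$:
\begin{equation*}
\nabla^g_X\mathfrak{t}_t=\Theta_t(X)\,,\qquad \nabla^g_X e_0=-\Theta_t(X)^{\sharp_{h_t}}\,,\qquad \nabla^g_{e_0}e_0=\mathrm{grad}_{h_t}\ln\beta_t\,,\qquad [e_0,X]=(X\ln\beta_t)\,e_0\,,
\end{equation*}
the last valid for $t$-independent $X\in\Gamma(T\Sigma)$. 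Everything after this is contraction and bookkeeping.

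For the implication ``parallel pair $\Rightarrow$ flow system'' I would decompose $u=u^0_t\,\mathfrak{t}_t+u^{\perp}_t$. Since $u$ is null and nowhere zero, $u^0_t=u(e_0)$ is nowhere zero on each $\Sigma_t$ (a nonzero null covector cannot annihilate the timelike $e_0$), so $l-(l^0_t/u^0_t)\,u$ is a representative of $[l]$ with vanishing $\mathfrak{t}_t$-component; after replacing $l$ by it (which only modifies $\kappa$) I may assume $l=l^{\perp}_t$, which proves ``$[l]=[l^{\perp}_t]$'' along the way. Contracting $\nabla^g u=0$ against the ordered pairs $(X,Y)$, $(X,e_0)$, $(e_0,X)$, $(e_0,e_0)$ of tangential/normal directions and using the identities above gives respectively $\nabla^{h_t}u^{\perp}_t+u^0_t\Theta_t=0$, $\dd u^0_t+\Theta_t(u^{\perp}_t)=0$, $\partial_t u^{\perp}_t+\beta_t\Theta_t(u^{\perp}_t)=u^0_t\,\dd\beta_t$, and $\partial_t u^0_t=\dd\beta_t(u^{\perp}_t)$ --- the first equations of \eqref{eq:globalesp2} and \eqref{eq:globalesp1} together with the two ``in particular'' identities. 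Contracting $\nabla^g l=\kappa\otimes u$, with $\kappa=\kappa^0_t\,\mathfrak{t}_t+\kappa^{\perp}_t$, the $(X,e_0)$ and $(e_0,e_0)$ sectors force $\kappa^{\perp}_t=(u^0_t)^{-1}\Theta_t(l^{\perp}_t)$ and $\beta_t u^0_t\kappa^0_t=-\dd\beta_t(l^{\perp}_t)$; substituting these into the $(X,Y)$ and $(e_0,X)$ sectors clears $\kappa$ and leaves precisely $u^0_t\nabla^{h_t}l^{\perp}_t=\Theta_t(l^{\perp}_t)\otimes u^{\perp}_t$ and $u^0_t\partial_t l^{\perp}_t+\beta_t u^0_t\Theta_t(l^{\perp}_t)+\dd\beta_t(l^{\perp}_t)\,u^{\perp}_t=0$, the second equations of \eqref{eq:globalesp2} and \eqref{eq:globalesp1}. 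Finally $g(u,u)=0$, $g(l,l)=1$, $g(l,u)=0$ read off as $(u^0_t)^2=\vert u^{\perp}_t\vert^2_{h_t}$, $\vert l^{\perp}_t\vert^2_{h_t}=1$, $h_t(u^{\perp}_t,l^{\perp}_t)=0$, giving \eqref{eq:globalesp3} and the orthogonality of the family.

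For the converse, given $\{u^{\perp}_t,l^{\perp}_t\}$ solving \eqref{eq:globalesp1}--\eqref{eq:globalesp3} with $u^{\perp}_t$ nowhere zero (necessary for the resulting $u$ to be a parabolic pair), I would set $u^0_t:=\vert u^{\perp}_t\vert_{h_t}$ with the sign fixed once and for all by the time orientation, $u:=u^0_t\,\mathfrak{t}_t+u^{\perp}_t$, $l:=l^{\perp}_t$, and $\kappa:=\kappa^0_t\,\mathfrak{t}_t+\kappa^{\perp}_t$ with $\kappa^{\perp}_t:=(u^0_t)^{-1}\Theta_t(l^{\perp}_t)$, $\kappa^0_t:=-(\beta_t u^0_t)^{-1}\dd\beta_t(l^{\perp}_t)$. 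Then $(u,[l])$ is a parabolic pair by the algebraic conditions, and running the contractions of the previous step in reverse recovers $\nabla^g l=\kappa\otimes u$ sector by sector; for $\nabla^g u=0$ one needs in addition $\partial_t u^0_t=\dd\beta_t(u^{\perp}_t)$ and $\dd u^0_t+\Theta_t(u^{\perp}_t)=0$, which I would obtain from \eqref{eq:globalesp3}: differentiating $(u^0_t)^2=h_t(u^{\perp}_t,u^{\perp}_t)$ in $t$, using $\partial_t h_t=-2\beta_t\Theta_t$ and the first equation of \eqref{eq:globalesp1}, the $\Theta$-terms cancel and the first identity drops out, while differentiating the same relation along $\Sigma$ and using the first equation of \eqref{eq:globalesp2} gives the second. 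Proposition \ref{prop:parallelspinorgeneral} then furnishes the parallel spinor, with associated pair $(u^0_t\,\mathfrak{t}_t+u^{\perp}_t,[l^{\perp}_t])$.

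The calculations are entirely mechanical once the normal Gauss--Weingarten identities are available; the only steps requiring genuine care are the global existence of the tangential representative $l=l^{\perp}_t$ (which hinges on $u^0_t$ never vanishing) and the observation that the two ``in particular'' identities are consequences of \eqref{eq:globalesp1}--\eqref{eq:globalesp3} rather than extra hypotheses --- without this, the converse direction would fail to reproduce the full content of $\nabla^g u=0$.
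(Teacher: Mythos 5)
Your proof is correct. Note that the paper itself does not prove this lemma at all: it is quoted verbatim from \cite[Lemma 2.6]{Murcia:2020zig}, so there is no internal argument to compare against; what you supply is exactly the expected verification, namely the orthogonal $3+1$ decomposition of the parabolic-pair equations $\nabla^g u=0$, $\nabla^g l=\kappa\otimes u$ relative to the foliation $\{\Sigma_t\}$, using the Weingarten-type identities that complement the tangential formula $\nabla^g\alpha\vert_{T\Sigma_t\times TM}=\nabla^{h_t}\alpha+\Theta_t(\alpha)\otimes\mathfrak{t}_t$ already recorded in Section \ref{sec:GlobhyperSpinors4d}, which is precisely the route the cited reference follows. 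Your two points of care are well placed: the reduction to a tangential representative $l=l^{\perp}_t$ does rest on $u^0_t=u(e_0)$ being nowhere zero (automatic for a nowhere-vanishing null $u$, and implicitly assumed in the converse, where the paper later divides by $u^0_t$), and deriving $\partial_t u^0_t=\dd\beta_t(u^{\perp}_t)$ and $\dd u^0_t+\Theta_t(u^{\perp}_t)=0$ from \eqref{eq:globalesp1}--\eqref{eq:globalesp3} is exactly what makes the converse recover the full content of $\nabla^g u=0$.
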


\noindent
The previous lemma gives the necessary and sufficient conditions for a globally hyperbolic Lorentzian four manifold $(M,g)$ to admit a real parallel spinor field. We will consider as variables of these equations tuples of the form:
\begin{equation}
\label{eq:originaltuple}
(\left\{ \beta_t \right\}_{t\in \cI} , \left\{ h_t \right\}_{t\in \cI},\left\{ u^0_t\right\}_{t\in \cI},\left\{ u^{\perp}_t\right\}_{t\in \cI} ,\left\{ l^{\perp}_t\right\}_{t\in \cI})\, .
\end{equation}

\noindent
These tuples contain the information about both the real parallel spinor and the underlying globally hyperbolic Lorentzian metric. The following theorem generalizes \cite[Theorem 5.4]{Murcia:2020zig} to the case in which $\left\{ \beta_t\right\}_{t\in\cI}$ is not necessarily constant. 

\begin{theorem} 
\label{thm:parallelspinorflow}
An oriented globally hyperbolic Lorentzian four-manifold $(M,g)$ admits a parallel spinor field if and only if there exists an orientation preserving diffeomorphism identifying $M = \mathcal{I}\times \Sigma$, where $\Sigma$ is an oriented three-manifold equipped with a family of strictly positive functions $\left\{ \beta_t\right\}_{t\in \cI}$ on $\Sigma$ and a family $\left\{ \fre^t \right\}_{t\in \cI}$ of sections of $\mathrm{F}(\Sigma)$ satisfying the following system of differential equations:
\begin{eqnarray}
\label{eq:globhyperspinorflowI}
& \partial_t e^t_a + \dd\beta_t(e^t_a) e^t_u  + \beta_t \Theta_t(e^t_a) =  \delta_{au} \dd\beta_t\, , \qquad \dd \fre^t= \Theta_t(\fre^t) \wedge e^t_u\, ,\\
&   \partial_t(\Theta_t(e^t_u ))  + \dd (\dd \beta_t(e_u^t)) = 0\, ,  \qquad  [\Theta_t(e^t_u )]=0\in H^1(\Sigma,\mathbb{R})\label{eq:globhyperspinorflowII} 
\end{eqnarray}
	
\noindent
where $\fre^t = (e_u^t , e_l^t, e_n^t) \colon \Sigma \to \mathrm{F}(\Sigma)$ and:
\begin{equation*}
h_{\fre^t} = e_u^t\otimes e_u^t + e_l^t\otimes e_l^t + e_n^t\otimes e_n^t\, , \qquad \Theta_t = - \frac{1}{2\beta_t}\partial_t h_{\fre^t}\, .
\end{equation*}
	
\noindent
In this case, the globally hyperbolic metric $g$ is given by:
\begin{equation*}
g = - \beta_t^2 \dd t\otimes \dd t + h_{\fre^t} \, ,
\end{equation*} 
	
\noindent
where $t$ is the Cartesian coordinate in the splitting $M = \mathbb{R}\times \Sigma$.
\end{theorem}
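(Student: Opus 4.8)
The strategy is to derive Theorem \ref{thm:parallelspinorflow} from Lemma \ref{lemma:spinorglobal} by a change of variables: the tuple \eqref{eq:originaltuple} carries more data than necessary, and I will trade $(\{h_t\},\{u^0_t\},\{u^{\perp}_t\},\{l^{\perp}_t\})$ for a single family of coframes $\{\fre^t\}$. Fix the Bernal--S\'anchez splitting $(M,g)=(\cI\times\Sigma,-\beta_t^2\,\dd t\otimes\dd t+h_t)$; since $\Sigma$ is an oriented three-manifold it is parallelizable, so $\mathrm{F}(\Sigma)$ admits global sections. For the direct implication, Proposition \ref{prop:parallelspinorgeneral} and Lemma \ref{lemma:spinorglobal} produce global one-forms $\{u^{\perp}_t,l^{\perp}_t\}$ and functions $\{u^0_t\}$ on $\Sigma$ satisfying \eqref{eq:globalesp1}--\eqref{eq:globalesp3}. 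By \eqref{eq:globalesp3} the function $u^0_t$ is nowhere zero (otherwise $u^{\perp}_t$ would vanish there too, contradicting $u\neq 0$), so I may set $e^t_u:=u^{\perp}_t/u^0_t$ and $e^t_l:=l^{\perp}_t$; these are $h_t$-orthonormal by \eqref{eq:globalesp3} and the orthogonality of $u^{\perp}_t$ and $l^{\perp}_t$, and I complete them to a positively oriented $h_t$-orthonormal coframe $\fre^t=(e^t_u,e^t_l,e^t_n)$ with $e^t_n:=\ast_{h_t}(e^t_u\wedge e^t_l)$, so that $h_t=h_{\fre^t}$ by construction.

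Next I would rewrite \eqref{eq:globalesp1}--\eqref{eq:globalesp3} as \eqref{eq:globhyperspinorflowI}--\eqref{eq:globhyperspinorflowII}. Using the identities $\partial_t u^0_t=\dd\beta_t(u^{\perp}_t)$ and $\dd u^0_t=-\Theta_t(u^{\perp}_t)$ from Lemma \ref{lemma:spinorglobal}, the two equations in \eqref{eq:globalesp1} become exactly the $a=u$ and $a=l$ cases of the first equation in \eqref{eq:globhyperspinorflowI}; the $a=n$ case is then forced, since differentiating the orthonormality relations $h_{\fre^t}^{-1}(e^t_a,e^t_b)=\delta_{ab}$ in $t$ and using $\Theta_t=-\tfrac{1}{2\beta_t}\partial_t h_{\fre^t}$ shows that the symmetric part of $\partial_t\fre^t$ in the $\fre^t$-basis equals $-\beta_t\Theta_t$, which together with the $a=u,l$ equations pins down $\partial_t e^t_n$ completely. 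For the second equation in \eqref{eq:globhyperspinorflowI} I would establish that \eqref{eq:globalesp2} is equivalent to $\dd\fre^t=\Theta_t(\fre^t)\wedge e^t_u$: skew-symmetrizing \eqref{eq:globalesp2} (after clearing $u^0_t$) gives the $e^t_u$ and $e^t_l$ components, and since $\nabla^{h_t}$ is metric one reads off the connection forms $\omega_{ul}=\Theta_t(e^t_l)$, $\omega_{un}=\Theta_t(e^t_n)$, $\omega_{ln}=0$ from \eqref{eq:globalesp2}, hence also $\nabla^{h_t}e^t_n=\Theta_t(e^t_n)\otimes e^t_u$ and the $e^t_n$ component; conversely, these are the unique torsion-free metric connection forms compatible with $\dd\fre^t=\Theta_t(\fre^t)\wedge e^t_u$. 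Finally, \eqref{eq:globhyperspinorflowII} is read off the scalar equations for $u^0_t$: from $\dd\log u^0_t=-\Theta_t(e^t_u)$ the class $[\Theta_t(e^t_u)]\in H^1(\Sigma,\RR)$ vanishes, and $\dd\partial_t\log u^0_t=\partial_t\dd\log u^0_t$ yields $\partial_t(\Theta_t(e^t_u))+\dd(\dd\beta_t(e^t_u))=0$.

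For the converse, given $\{\beta_t\}$ and $\{\fre^t\}$ on $\Sigma$ satisfying \eqref{eq:globhyperspinorflowI}--\eqref{eq:globhyperspinorflowII}, I would reconstruct $u^0_t$ and invoke Lemma \ref{lemma:spinorglobal}. Fix $t_0\in\cI$; since $\Theta_{t_0}(e^{t_0}_u)$ is exact by \eqref{eq:globhyperspinorflowII}, write $-\Theta_{t_0}(e^{t_0}_u)=\dd\psi_0$ and set $\psi_t:=\psi_0+\int_{t_0}^{t}\dd\beta_s(e^s_u)\,\dd s$; then $\partial_t\dd\psi_t=\dd(\dd\beta_t(e^t_u))=-\partial_t(\Theta_t(e^t_u))$ by \eqref{eq:globhyperspinorflowII}, whence $\dd\psi_t=-\Theta_t(e^t_u)$ for all $t$, so $u^0_t:=e^{\psi_t}$ satisfies $\partial_t\log u^0_t=\dd\beta_t(e^t_u)$ and $\dd u^0_t=-u^0_t\,\Theta_t(e^t_u)$. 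Putting $u^{\perp}_t:=u^0_t e^t_u$, $l^{\perp}_t:=e^t_l$ and $h_t:=h_{\fre^t}$, a direct substitution verifies \eqref{eq:globalesp1}--\eqref{eq:globalesp3}: \eqref{eq:globalesp3} is orthonormality of $\fre^t$; the $a=u$ and $a=l$ cases of \eqref{eq:globhyperspinorflowI} together with the $u^0_t$-identities give \eqref{eq:globalesp1}; and the equivalence $\dd\fre^t=\Theta_t(\fre^t)\wedge e^t_u\Leftrightarrow$ \eqref{eq:globalesp2} established above gives \eqref{eq:globalesp2}. Lemma \ref{lemma:spinorglobal} then furnishes a parallel parabolic pair $(u,[l])$ with $u=u^0_t\mathfrak{t}_t+u^{\perp}_t$ and $[l]=[l^{\perp}_t]$, hence by Proposition \ref{prop:parallelspinorgeneral} a parallel spinor on $(M,g)=(\cI\times\Sigma,-\beta_t^2\,\dd t\otimes\dd t+h_{\fre^t})$.

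The step I expect to be the real work is the equivalence between the first-order exterior system $\dd\fre^t=\Theta_t(\fre^t)\wedge e^t_u$ and the covariant system \eqref{eq:globalesp2}: one must reconstruct the full connection one-forms of $h_{\fre^t}$ — in particular the $e^t_n$-component of $\nabla^{h_t}$, which is not manifestly present in \eqref{eq:globalesp2} — from the exterior derivatives of the coframe, using crucially that $\nabla^{h_t}$ is metric and torsion-free, and one must keep track of the asymmetry between the three coframe directions. The remaining manipulations are essentially bookkeeping; the only ingredient that goes beyond \cite[Theorem 5.4]{Murcia:2020zig} is the consistent propagation of the $\dd\beta_t$-terms, and \eqref{eq:globhyperspinorflowII} is precisely the integrability condition that makes it work.
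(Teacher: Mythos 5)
Your proposal is correct and follows essentially the same route as the paper's proof: reduce to Lemma \ref{lemma:spinorglobal} via $e^t_u=u^{\perp}_t/u^0_t$, $e^t_l=l^{\perp}_t$, $e^t_n=\ast_{h_t}(e^t_u\wedge e^t_l)$ in the forward direction, and in the converse reconstruct $u^0_t=e^{\frf_t}$ from the exactness of $\Theta_t(e^t_u)$ together with the evolution equation in \eqref{eq:globhyperspinorflowII}, recovering \eqref{eq:globalesp2} by identifying $\dd\fre^t=\Theta_t(\fre^t)\wedge e^t_u$ with the first Cartan structure equations for the Levi-Civita connection of $h_{\fre^t}$. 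The only deviations are cosmetic: you build the potential by integrating $\dd\beta_s(e^s_u)$ in time from a fixed $t_0$ instead of correcting a family of potentials by a function $c(t)$, and you phrase the determination of $\nabla^{h_t}e^t_n$ through uniqueness of the metric torsion-free connection forms rather than the Hodge-star computation used in the paper.
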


\begin{proof}
By Lemma \ref{lemma:spinorglobal}, a globally hyperbolic Lorentzian four-manifold $(M,g)$ admits a real Killing spinor field if and only if there exists a Cauchy surface $\Sigma \hookrightarrow M$ equipped with a tuple \eqref{eq:originaltuple} satisfying equations \eqref{eq:globalesp1}, \eqref{eq:globalesp2} and \eqref{eq:globalesp3}. Let:
\begin{equation}
(\left\{ \beta_t \right\}_{t\in \cI} , \left\{ h_t \right\}_{t\in \cI},\left\{ u^0_t\right\}_{t\in \cI},\left\{ u^{\perp}_t\right\}_{t\in \cI} ,\left\{ l^{\perp}_t\right\}_{t\in \cI})\, ,
\end{equation}

\noindent
be such a solution and define:
\begin{equation}
\label{eq:redfeuel}
e_u^t = \frac{u^{\perp}_t}{u^0_t}\, , \qquad e^t_l = l^{\perp}_t\, .
\end{equation}

\noindent
Then $(e^t_u , e^t_l)$ is a family of nowhere vanishing and orthonormal one-forms on $\Sigma$, which can be canonically completed to a family of orthonormal coframes $\left\{  \fre^t = (e^t_u , e^t_l , e^t_n) \right\}$ by defining the family of one-forms $\left\{ e_n^t\right\}_{t\in\cI}$ as follows:
\begin{equation*}
e_n^t := \ast_{h_t}(e^t_u \wedge e^t_l)\, ,
\end{equation*} 

\noindent
where $\ast_{h_t}$ denotes the Hodge dual associated to the family of metrics $\left\{ h_t\right\}_{t\in\cI}$. Plugging equations \eqref{eq:redfeuel} into the first and second equations in \eqref{eq:globalesp1} and manipulating the time derivative of $\left\{ u^{\perp}_t\right\}_{t\in \cI}$ we obtain the first equation in \eqref{eq:globhyperspinorflowI} for $a =u$ and $a = l$. For $\left\{ e_n^t\right\}_{t\in\cI}$ we compute as follows:
\begin{eqnarray*}
& 0 = \partial_t (h^{-1}_t (e_n^t, e_u^t )) = (\partial_t  h^{-1}_t) (e_n^t, e_u^t ) + h^{-1}_t (\partial_t e_n^t, e_u^t ) + h^{-1}_t (e_n^t, \partial_t e_u^t ) = 2\beta_t \Theta_t(e_n^t, e_u^t )\\
& + h^{-1}_t (\partial_t e_n^t, e_u^t ) + h^{-1}_t (e_n^t,    \dd\beta_t - \beta_t \Theta_t(e^t_u)) =  \beta_t \Theta_t(e_n^t, e_u^t )+ h^{-1}_t (\partial_t e_n^t, e_u^t ) + \dd\beta_t(e_n^t)\, ,\\
& 0 = \partial_t (h^{-1}_t (e_n^t, e_l^t )) = (\partial_t  h^{-1}_t) (e_n^t, e_l^t ) + h^{-1}_t (\partial_t e_n^t, e_l^t ) + h^{-1}_t (e_n^t, \partial_t e_l^t ) = 2\beta_t \Theta_t(e_n^t, e_u^t )\\
& + h^{-1}_t (\partial_t e_n^t, e_l^t ) + h^{-1}_t (e_n^t,  \beta_t \Theta_t(e^t_l)) =  \beta_t \Theta_t(e_n^t, e_u^t )+ h^{-1}_t (\partial_t e_n^t, e_l^t )\, ,\\
\end{eqnarray*}

\noindent
which immediately implies the first equation in \eqref{eq:globhyperspinorflowI} for the remaining case $a = n$. On the other hand, by Lemma \ref{lemma:spinorglobal}, we have:
\begin{equation*}
\frac{\dd u^0_t}{u^0_t} + \Theta_t(e^{t}_u) = 0\, ,
\end{equation*}

\noindent
whence $[\Theta_t(e^t_u )]=0\in H^1(\Sigma,\mathbb{R})$, which yields the second equation in \eqref{eq:globhyperspinorflowII}. Taking the time derivative of the previous equations we obtain:
\begin{equation*}
\dd \partial_t \log\vert u^0_t\vert + \partial_t(\Theta_t(e^{t}_u)) = 0\, .
\end{equation*}

\noindent
Since by Lemma \ref{lemma:spinorglobal} we have $\partial_t u^0_t = \dd\beta_t(u^{\perp}_t)$, the previous equation implies the first equation in \eqref{eq:globhyperspinorflowII}. We compute:
\begin{equation*}
\nabla^{h_t} e^t_n = \nabla^h  \ast_{h_t} (e^t_u\wedge e^t_l) =    \ast_{h_t} (\nabla^{h_t} e^t_u\wedge e^t_l) + \ast_{h_t} ( e^t_u\wedge \nabla^{h_t} e^t_l) = \Theta_t(e^t_n)\otimes e^t_u \, .
\end{equation*}

\noindent
The skew-symmetrization of the previous equation together with the  skew-symmetrization of equations \eqref{eq:globalesp2} yields the second equation in \eqref{eq:globhyperspinorflowI}. Conversely, suppose that $\left\{ \fre^t , \beta_t\right\}_{t\in \cI}$ is a solution of equations \eqref{eq:globhyperspinorflowI} and \eqref{eq:globhyperspinorflowII}, and set:
\begin{equation*}
	h_{\fre^t} = e_u^t\otimes e_u^t + e_l^t\otimes e_l^t + e_n^t\otimes e_n^t\, , \qquad \Theta_t = - \frac{1}{2\beta_t}\partial_t h_{\fre^t}\, .
\end{equation*}

\noindent
Since $[\Theta_t(e^t_u )]=0$ in $H^1(\Sigma,\mathbb{R}) = 0$, there exists a smooth family of functions $\left\{ \bar{\frf}_t \right\}_{t\in \mathbb{R}}$ such that:
\begin{equation*}
\dd\bar{\frf}_t = - \Theta_t(e^t_u ) \, . 
\end{equation*}

\noindent
Taking the time-derivative of the previous expression we obtain:
\begin{equation*}
\dd\partial_t\bar{\frf}_t = - \partial_t(\Theta_t(e^t_u)) \, .
\end{equation*}

\noindent
Hence, comparing with the first equation in \eqref{eq:globhyperspinorflowII} we conclude:
\begin{equation*}
\dd\partial_t\bar{\frf}_t = \dd (\dd\beta_t(e^t_u))\, .
\end{equation*}

\noindent
implying $\partial_t \bar{\frf}_t = \dd\beta_t(e^t_u) + c(t)$ for a certain function $c(t)$ depending exclusively on $t$. Set $\frf_t := \bar{\frf}_t - \int c(\tau) \dd \tau$. By construction we have $\partial_t \frf_t = \dd\beta_t(e^t_u)$. Furthermore:
\begin{equation*}
\dd\partial_t\frf_t = - \partial_t\Theta_t(e^t_u ) \, .
\end{equation*}

\noindent
Define now $u^{\perp}_t := e^{\frf_t}\, e_u^t$ and $l^{\perp}_t := e^t_l$. The fact that both $e_u^t$ and $e_l^t$ satisfy the first equation in \eqref{eq:globhyperspinorflowI} implies:
\begin{equation*}
\partial_t u^{\perp}_t + \beta_t \Theta_t(u^{\perp}_t) + (\dd\beta_t(e^t_u) -\partial_t \frf_t) u^{\perp}_t = u^0_t  \dd \beta_t \, ,
\end{equation*}

\noindent
as well as:
\begin{equation*}
u^0_t \partial_t l^{\perp}_t +\beta_t  u^0_t\Theta_t(l^{\perp}_t) + \dd \beta_t (l^{\perp}_t) u^{\perp}_t = 0  \, ,
\end{equation*}

\noindent
where we have identified $u^0_t := e^{\frf_t}$. Using the fact that $\partial_t \frf_t = \dd\beta_t(e^t_u)$ we obtain equations \eqref{eq:globalesp1}. Equations \eqref{eq:globalesp2} follow directly by interpreting the second equation in \eqref{eq:globhyperspinorflowI} as the first Cartan structure equations for the coframe $\fre^t$, considered as orthonormal with respect to the metric $h_{\fre^t} = e^t_u\otimes e^t_u + e^t_l\otimes e^t_l + e^t_n\otimes e^t_n$. Finally, equations \eqref{eq:globalesp3} hold by construction and hence we conclude.
\end{proof}

\begin{definition}
Equations  \eqref{eq:globhyperspinorflowI} and \eqref{eq:globhyperspinorflowII} are the \emph{real parallel spinor flow equations}. A \emph{real parallel spinor flow} is a family $\left\{ \beta_t ,\fre^t\right\}_{t\in \cI}$ of functions and coframes on $\Sigma$ satisfying the real parallel spinor flow equations.
\end{definition}

\noindent
Therefore, a globally hyperbolic Lorentzian four-manifold admits a parallel spinor if and only if it admits a Cauchy surface carrying a parallel spinor flow $\left\{\beta_t , \fre^t\right\}_{t\in \cI}$. In particular, the corresponding parallel spinor $\varepsilon$ can be fully reconstructed from $\left\{ \beta_t , \fre^t\right\}_{t\in \cI}$. We remark that for our purposes the explicit expression of the parallel spinor associated to a given parallel spinor flow $\left\{ \beta_t , \fre^t\right\}_{t\in \cI}$ is of no relevance in itself. Instead, we are interested in the geometric and topological consequences associated to the existence of a parallel spinor $\varepsilon$, rather than on its specific expression.


\subsection{Admissible common initial data}
\label{eq:constraint}


The parallel spinor flow equations pose an evolution problem whose associated constraint equations are equivalent to the constraint equations of the evolution problem posed by a parallel spinor on a globally hyperbolic Lorentzian four-manifold.  Taking $\Sigma := \Sigma_0$ as the Cauchy hypersurface of $(M,g)$ and setting:
\begin{equation*}
\fre := \fre^0\, , \quad \Theta := \Theta_0\, ,
\end{equation*}

\noindent
the restriction of the second set of equations in \eqref{eq:globhyperspinorflowI} and of the second equation in \eqref{eq:globhyperspinorflowII} to $\Sigma$ reads:
\begin{eqnarray}
\label{eq:exderKI}
& \dd e_u=\Theta(e_u)\wedge e_u \, , \quad \dd e_l=\Theta(e_l) \wedge e_u  \, , \quad \dd e_n   = \Theta(e_n) \wedge e_u \, ,\\
& [\Theta(e_u) ] = 0 \in H^1(\Sigma,\mathbb{R})\, .\label{eq:exderKII}
\end{eqnarray}

\noindent
We will consider equations \eqref{eq:exderKI} and \eqref{eq:exderKII} as the constraint equations of the parallel spinor flow, whose solutions $(\fre,\Theta)$ are by definition the allowed initial data of the parallel spinor flow. We will refer to equations \eqref{eq:exderKI} and \eqref{eq:exderKII} as the \emph{parallel Cauchy differential system}.    
 
\begin{definition}
\label{def:KCC}
A \emph{parallel Cauchy pair} $(\fre,\Theta)$ is a solution of the parallel Cauchy differential system.  
\end{definition}

\noindent
We denote by $\Conf(\Sigma)$ the configuration space of the parallel Cauchy differential system, that is, its space of variables $(\fre,\Theta)$, whereas we denote by $\Sol(\Sigma)$ the space of parallel Cauchy pairs. Note that the function $\beta_0$ does not occur in the parallel Cauchy differential system, exactly as it happens with the initial value problem posed by the Ricci flat condition of a Lorentzian metric \cite{Yvonne,ChoquetBruhatBook}. Given a pair $(\fre,\Theta) \in \Conf(\Sigma)$, we denote by $h_{\fre}$ the Riemannian metric on $\Sigma$ defined by:
\begin{equation*}
h_{\fre} = e_{u}\otimes e_u + e_{l}\otimes e_l  + e_{n}\otimes e_n\, ,
\end{equation*}

\noindent
where $\fre = (e_u,e_l,e_n)$. We say that $(\fre,\Theta)$ is complete if $h_{\fre}$ is a complete Riemannian metric on $\Sigma$. Denote by $\mathrm{Met}(\Sigma) \times \Gamma(T^{\ast}\Sigma\odot T^{\ast}\Sigma)$ the set of pairs consisting of Riemannian metrics and symmetric two-tensors on $\Sigma$. We obtain a canonical map:
\begin{equation*}
\Psi\colon \Conf(\Sigma) \to \mathrm{Met}(\Sigma) \times \Gamma(T^{\ast}M\odot T^{\ast}M)\, , \qquad (\fre,\Theta) \mapsto (h_{\fre},\Theta)\, .
\end{equation*}

\noindent
The set $\mathrm{Met}(\Sigma) \times \Gamma(T^{\ast}M\odot T^{\ast}M)$ is in fact the configuration space of the constraint equations associated to the Cauchy problem posed by the Ricci flat condition on a globally hyperbolic Lorentzian four-manifold with Cauchy surface $\Sigma$, which are given by \cite{Yvonne,ChoquetBruhatBook}:
\begin{equation}
\label{eq:EinsteinonSigma}
\mathrm{R}_h = \vert\Theta\vert^2_h - \mathrm{Tr}_h(\Theta)^2\, ,\qquad  \dd \mathrm{Tr}_h(\Theta) = \mathrm{div}_h(\Theta)\, ,
\end{equation}

\noindent
for pairs $(h,\fre) \in \mathrm{Met}(\Sigma) \times \Gamma(T^{\ast}M\odot T^{\ast}M)$. 

\begin{remark}
The first equation in \eqref{eq:EinsteinonSigma} is usually called the \emph{Hamiltonian constraint} whereas the second equation in \eqref{eq:EinsteinonSigma} is usually called the \emph{momentum constraint}. 
\end{remark}

\noindent
Therefore, the map $\Psi$ provides a natural link between the initial value problem associated to a parallel spinor and the initial value problem associated to the Ricci-flatness condition. In particular, it allows introducing a natural notion of \emph{admissible} initial data to both evolution problems. 

\begin{definition}
A parallel Cauchy pair $(\fre,\Theta)$ is \emph{constrained Ricci flat} if $(h_{\fre},\Theta)$ satisfies the momentum and Hamiltonian constraints \eqref{eq:EinsteinonSigma}.
\end{definition}

\noindent
In references \cite{BaumLeistnerLischewski,LeistnerLischewski} it was proven that the Cauchy problem posed by a parallel spinor is well-posed, implying that every parallel Cauchy pair admits a Lorentzian development carrying a parallel spinor and hence a parallel spinor flow. However, since a Lorentzian metric admitting a parallel spinor is not necessarily flat, it might not be possible to \emph{evolve} a constrained Ricci flat parallel Cauchy pair $(\fre,\Theta)$ in such a way that both the Ricci-flatness condition and the existence of a parallel spinor are guaranteed. We will solve this problem in the affirmative in the next subsection and, in doing so, we will obtain an \emph{initial data characterization} of parallel spinors on globally hyperbolic Ricci flat four-manifolds.


\subsection{Initial data characterization}


Denote by $\cP(\Sigma)$ the set of parallel spinor flows on $\Sigma$, that is, the set of families $\left\{\beta_t,\fre^t\right\}_{t\in\cI}$ satisfying the parallel spinor flow equations \eqref{eq:globhyperspinorflowI} and \eqref{eq:globhyperspinorflowII}. We have a canonical map:
\begin{equation*}
\Phi \colon \cP(\Sigma) \to \mathrm{Lor}_{\circ}(M)\, , \qquad \left\{\beta_t,\fre^t\right\}_{t\in\cI} \mapsto  g= -\beta_t^2 \dd t^2 + h_{\fre^t}\, ,
\end{equation*}

\noindent
from $\cP(\Sigma)$ to the set $\mathrm{Lor}_{\circ}(M)$ of globally hyperbolic Lorentzian metrics on $M=\cI\times \Sigma$. For simplicity in the exposition, we will refer to $\Phi(\left\{\beta_t,\fre^t\right\})$ as the globally hyperbolic metric determined by $\left\{\beta_t,\fre^t\right\}_{t\in\cI}$. Given a parallel spinor flow $\left\{\beta_t,\fre^t\right\}_{t\in\cI}$, there exists a smooth family functions $\left\{ \frf_t\right\}_{t\in\cI}$ such that:
\begin{equation*}
\dd\frf_t = - \Theta_t(e^t_u)\, , \qquad \partial_t \frf_t  = \dd \beta_t (e^t_u)\, , 
\end{equation*}

\noindent
which is unique modulo the addition of a real constant. Using this family of functions, we obtain a canonical map:
\begin{equation*}
\Xi \colon \cP(\Sigma) \to \cB(M)\, , \qquad \left\{\beta_t,\fre^t\right\}_{t\in\cI} \mapsto  (u =  e^{\frf_t} (\beta_t \dd t + e^t_u), [l=e^t_l])\, ,
\end{equation*}

\noindent
from the set of parallel spinor flows on $\Sigma$ to the set $\cB(M)$ of parabolic pairs on $M$ with respect to the globally hyperbolic metric defined by the given parallel spinor flow. The previous maps provide a construction which is essentially \emph{inverse} to the splitting and reduction implemented at the beginning of Section \ref{sec:GlobhyperSpinors4d} and which allows us to relate properties of a given parallel spinor flow to properties of its associated globally hyperbolic  four-dimensional Lorentzian metric. For further reference, we introduce the \emph{Hamiltonian function} of a parallel spinor flow $\left\{\beta_t,\fre^t\right\}_{t\in\cI}$ as follows:
\begin{equation*}
\cH \colon M = \mathbb{R}\times \Sigma \to \mathbb{R}\, , \qquad (t,p) \mapsto (\mathrm{R}_{h_t}- \vert \Theta_t \vert_{h_t}^2+\mathrm{Tr}_{h_t} (\Theta_t)^2)\vert_p\, ,
\end{equation*}

\noindent
where $h_t := h_{\fre^t}$ denotes the three-dimensional metric restricted to the Cauchy surface $\Sigma_t$ and $\mathrm{R}_{h_{\fre^t}}$ its scalar curvature. 

\begin{proposition}
\label{prop:ricci}
Let $\left\{\beta_t,\fre^t\right\}_{t\in\cI}$ be a parallel spinor flow on $\Sigma$. The Ricci curvature of $g = \Phi(\left\{\beta_t,\fre^t\right\})$ reads:
\begin{equation}
\label{eq:ricci4d}
 \ric^g = \frac{1}{2} \mathcal{H} e^{- 2 \frf_t}  u\otimes u\, ,
\end{equation}

\noindent
where $\Xi(\left\{\beta_t,\fre^t\right\}) = (u,[l])$ and $u = e^{\frf_t}(\beta_t\dd t + e^t_u)$.
\end{proposition}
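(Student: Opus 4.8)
The plan is to compute $\ric^g$ directly from the parabolic pair $(u,[l])$ supplied by the map $\Xi$, exploiting the fact that $u$ is a null one-form built out of the parallel spinor flow data. The key structural input is that, although $u$ itself need not be parallel for the globally hyperbolic metric $g = -\beta_t^2\,\dd t^2 + h_{\fre^t}$ associated to a general parallel spinor flow (parallelism of $u$ would force the Hamiltonian function to vanish, which is precisely the Ricci-flat case), the combination $e^{-\frf_t}u$ \emph{is} the restriction of a parallel null one-form on a suitable auxiliary structure, or more precisely $u$ satisfies a conformal-type parallelism equation $\nabla^g u = \sigma \odot u$ for an explicit one-form $\sigma$ expressible through $\frf_t$ and $\beta_t$. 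First I would use the defining relations $\dd\frf_t = -\Theta_t(e_u^t)$ and $\partial_t\frf_t = \dd\beta_t(e_u^t)$ together with the flow equations \eqref{eq:globhyperspinorflowI}--\eqref{eq:globhyperspinorflowII} to show that $u = e^{\frf_t}(\beta_t\,\dd t + e_u^t)$ obeys $\nabla^g u = \dd\frf_t \otimes u$ — equivalently $\dd u = \dd\frf_t \wedge u$ and $\cL_{u^\sharp} g$ has the expected form — so that $\wt u := e^{-\frf_t}u$ is genuinely $g$-parallel away from where one cares, or at least that $u^\sharp$ is a null geodesic vector field whose expansion, shear and twist are controlled.

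Next I would feed this into the standard curvature identity for a null one-form. Writing $\nabla^g u = \dd\frf_t\otimes u$ and differentiating once more, the Ricci identity $R(X,Y)u = \nabla_X\nabla_Y u - \nabla_Y\nabla_X u - \nabla_{[X,Y]}u$ collapses dramatically because every term reproduces $u$ up to a scalar: one gets $R^g{}_{abc}{}^d u_d$ proportional to $u$ times a combination of second derivatives of $\frf_t$, and contracting yields $\ric^g(\,\cdot\,,u^\sharp)$ proportional to $u$. The null and parallel-modulo-$u$ structure forces $\ric^g$ to have the Segre type of a pure radiation tensor, i.e.\ $\ric^g = \Lambda\, u\otimes u$ for a single scalar function $\Lambda$; this is consistent with the general fact (cited in the introduction via \cite{Mars}) that a Lorentzian manifold with a parallel spinor has pure-radiation energy–momentum. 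It then remains only to identify $\Lambda = \tfrac12\mathcal{H}e^{-2\frf_t}$.

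To pin down the coefficient I would compute one scalar invariant on both sides. The natural choice is to trace the Gauss–Codazzi decomposition of $\ric^g$ relative to the foliation $\{\Sigma_t\}$: the purely spatial trace of $\ric^g$ restricted to $\Sigma_t$ is, by the Gauss equation, $\mathrm{R}_{h_t} - \vert\Theta_t\vert^2_{h_t} + \mathrm{Tr}_{h_t}(\Theta_t)^2$ plus terms involving $\beta_t$ and $\partial_t\Theta_t$ that vanish \emph{on the constraint surface direction} picked out by $e_u^t$; equivalently, evaluating the ansatz $\Lambda\,u\otimes u$ on the frame shows $\ric^g(E,E)$ for a unit spatial vector $E$ orthogonal to $e_u^t{}^\sharp$ equals $\Lambda\,e^{2\frf_t}\beta_t^{-2}\langle u,\cdot\rangle^2$, and comparing with the $00$-type Gauss–Codazzi component gives $\Lambda\,e^{2\frf_t} = \tfrac12\mathcal{H}$, hence \eqref{eq:ricci4d}. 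The main obstacle, and the place where care is needed, is the bookkeeping in the first step: verifying $\nabla^g u = \dd\frf_t\otimes u$ requires combining the time-evolution equations for $e_u^t$, the Cartan structure equation $\dd\fre^t = \Theta_t(\fre^t)\wedge e_u^t$, the cohomological condition $[\Theta_t(e_u^t)] = 0$, and the second-order constraint $\partial_t(\Theta_t(e_u^t)) + \dd(\dd\beta_t(e_u^t)) = 0$ into a single clean statement, and making sure the lapse $\beta_t$ enters $\nabla^g$ correctly through the Christoffel symbols of the ADM-type metric. Once that identity is in hand, the curvature computation is essentially forced by the algebra of null parallel forms, so I expect no further difficulty there.
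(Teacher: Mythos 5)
Your first and central step is false, and it inverts the structure that the proposition relies on. The one-form $u = e^{\frf_t}(\beta_t\,\dd t + e^t_u)$ produced by $\Xi$ is the Dirac current of the parallel spinor and, by the equivalences of Lemma \ref{lemma:spinorglobal} and Theorem \ref{thm:parallelspinorflow}, it satisfies $\nabla^g u = 0$, not $\nabla^g u = \dd\frf_t\otimes u$: the factor $e^{\frf_t}$, with $\dd\frf_t = -\Theta_t(e^t_u)$ and $\partial_t\frf_t = \dd\beta_t(e^t_u)$, is precisely what turns the merely recurrent one-form $\beta_t\,\dd t + e^t_u$ into a parallel one. Your parenthetical claim that parallelism of $u$ would force $\cH=0$ and Ricci flatness is also wrong: a parallel null one-form is perfectly compatible with $\ric^g=\Lambda\, u\otimes u$, $\Lambda\neq 0$, and indeed the proposition you are proving exhibits exactly this; the failure of the implication from a parallel spinor to Ricci flatness is the paper's main theme. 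More seriously, even granting recurrence (or parallelism) of $u$, your second step does not follow: $\nabla^g u=\sigma\otimes u$ only constrains the curvature contracted with $u$, hence $\ric^g(\cdot,u^{\sharp})$, and does not force the full Ricci tensor into the pure-radiation form $\Lambda\, u\otimes u$ (Brinkmann metrics with a parallel null vector generically violate it). To obtain that form you must use the whole parallel parabolic pair, i.e. also $\nabla^g l = \kappa\otimes u$, equivalently the parallel spinor itself as in the result of Mars/Bryant that you mention only as a consistency check. As written, the derivation of the pure-radiation form -- the heart of the matter -- rests on a false identity and an invalid inference, so there is a genuine gap.

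By contrast, your coefficient-fixing step is essentially sound once stated cleanly, and it is the germ of an argument genuinely different from the paper's, which proves \eqref{eq:ricci4d} by a direct (admittedly tedious) computation of connection and curvature from the Cartan structure equations of the orthonormal coframe $(\beta_t\,\dd t, e^t_u, e^t_l, e^t_n)$. If you first establish $\ric^g=\Lambda\, u\otimes u$ (by citation, or from both equations of the parallel pair), then $\mathrm{R}^g=\Lambda\, g^{-1}(u,u)=0$, and the twice-contracted Gauss equation -- the identity $2\,\ric^g(\nu,\nu)+\mathrm{R}^g=\mathrm{R}_{h_t}-\vert\Theta_t\vert^2_{h_t}+\mathrm{Tr}_{h_t}(\Theta_t)^2=\cH$, valid for any metric with no evolution equations and in particular with no $\partial_t\Theta_t$ terms that need to be argued away -- together with $u(\nu)=e^{\frf_t}$ for the unit normal $\nu=\beta_t^{-1}\partial_t$ gives $2\Lambda e^{2\frf_t}=\cH$, which is \eqref{eq:ricci4d}. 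Note also that your evaluation on spatial vectors orthogonal to $e^t_u$ yields $0=0$ and carries no information, so the normalization must come from the normal-normal component as above. Repaired in this way the plan would be shorter than the paper's computation, but the submission as it stands does not prove the proposition.
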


\begin{proof}
Let $\left\{\beta_t,\fre^t\right\}_{t\in\cI}$ be a parallel spinor flow on $\Sigma$ and let $g = \Phi(\left\{\beta_t,\fre^t\right\}) = -\beta^2_t \dd t \otimes \dd t + h_t$ its associated globally hyperbolic metric on $M = \mathbb{R}\times \Sigma$. The pair $\left\{\beta_t,\fre^t\right\}_{t\in\cI}$ defines a global orthonormal coframe $(e_0,e_1,e_2,e_3)$ on $(M,g)$ given by:
\begin{equation*}
	e_0\vert_{(t,p)} : = \beta_t\vert_{p}  \dd t \, , \qquad e_1\vert_{(t,p)}  := e^t_u\vert_{p} \, , \qquad e_2\vert_{(t,p)}  := e^t_l\vert_{p} \, , \qquad e_3\vert_{(t,p)}  := e^t_n\vert_{p} \, .
\end{equation*}

\noindent
The fact that $\left\{\beta_t,\fre^t\right\}_{t\in\cI}$ is a parallel spinor flow implies that the exterior derivatives of the coframe $(e_0,e_1,e_2,e_3)$ on $M$ are prescribed as follows:
\begin{equation*}
\dd e_0 = \dd\log(\beta_t) \wedge e_0 \, , \quad \dd e_a = (\dd\log(\beta_t)(e_a)\, e_1 +  \Theta_t(e_a) - \delta_{a1} \dd \log(\beta_t)) \wedge e_0 + \Theta_t (e_a) \wedge e_1\, ,
\end{equation*}

\noindent
where $a = 1,2,3$. Interpreting the previous expression as the first Cartan structure equations for $\nabla^g$ with respect to the orthonormal coframe $(e_0,e_1,e_2,e_3)$ and using repeatedly Equations \eqref{eq:globhyperspinorflowI} and \eqref{eq:globhyperspinorflowII}, a tedious calculation yields Equation \eqref{eq:ricci4d} and hence we conclude.
\end{proof}

\begin{remark}
It is well-known that the Ricci curvature $\mathrm{Ric}^g$ of a Lorentzian four-manifold admitting parallel spinors is of the form $\mathrm{Ric}^g = f u\otimes u$ for some function $f\in C^{\infty}(M)$ \cite{Mars}. Nonetheless, and to the best of our knowledge, Equation \eqref{eq:ricci4d} is the first precise characterization of such function $f$ in the case of globally hyperbolic Lorentzian four-manifolds.  
\end{remark}
 
\begin{theorem}
\label{thm:preservingconstraints}
The parallel spinor flow preserves the vacuum momentum and Hamiltonian constraints.
\end{theorem}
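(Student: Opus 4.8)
The plan is to use Proposition \ref{prop:ricci} as the engine: it already encodes the full Ricci tensor of the development $g = \Phi(\{\beta_t,\fre^t\})$ in terms of the Hamiltonian function $\cH$. Recall that $\cH\vert_{(t,p)} = \mathrm{R}_{h_t} - \vert\Theta_t\vert_{h_t}^2 + \mathrm{Tr}_{h_t}(\Theta_t)^2$, so that $-\tfrac12\cH\vert_{\Sigma_t}$ is precisely the Hamiltonian constraint quantity on the slice $\Sigma_t$. The statement ``the parallel spinor flow preserves the vacuum momentum and Hamiltonian constraints'' should be read as: if the initial parallel Cauchy pair $(\fre,\Theta) = (\fre^0,\Theta^0)$ is constrained Ricci flat, i.e. satisfies \eqref{eq:EinsteinonSigma} on $\Sigma = \Sigma_0$, then every slice $\Sigma_t$ of the evolved spacetime satisfies \eqref{eq:EinsteinonSigma} as well — equivalently, $\cH$ vanishes identically on $M$, and the development $(M,g)$ is Ricci flat.

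First I would unwind the definitions: the momentum and Hamiltonian constraints on $\Sigma_t$ are, by the Gauss–Codazzi equations, exactly the $(0,0)$ and mixed $(0,a)$ components of the Einstein tensor of $g$ restricted to $\Sigma_t$. So the pair $(h_t,\Theta_t)$ satisfies \eqref{eq:EinsteinonSigma} on $\Sigma_t$ if and only if $\ric^g$ has no $e_0\otimes e_0$ component and no $e_0\odot e_a$ components along $\Sigma_t$ — more carefully, if and only if the Einstein tensor $G^g = \ric^g - \tfrac12 \mathrm{R}^g g$ has vanishing ``$\perp$'' block there. But Proposition \ref{prop:ricci} tells us $\ric^g = \tfrac12\cH e^{-2\frf_t} u\otimes u$ with $u = e^{\frf_t}(\beta_t\,\dd t + e^t_u) = e^{\frf_t}(e_0 + e_1)$ a null one-form; hence $\mathrm{R}^g = \tr_g \ric^g = 0$ automatically (a null vector is $g$-orthogonal to itself), so $G^g = \ric^g = \tfrac12\cH e^{-2\frf_t} u\otimes u$. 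Writing $u = e^{\frf_t}(e_0+e_1)$ and projecting onto $\Sigma_t$, the $e_0\otimes e_0$ component of $G^g$ is $\tfrac12\cH$, and this is (up to sign convention) the Hamiltonian constraint of $(h_t,\Theta_t)$; similarly the $e_0\odot e_1$ component is $\tfrac12\cH$, giving the momentum constraint. Therefore all the constraint quantities on every slice are controlled by the single scalar function $\cH$.

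The real content is then to show $\cH\equiv 0$ on $M$ given $\cH\vert_{\Sigma_0}=0$. Here I would invoke the contracted second Bianchi identity: $\mathrm{div}^g G^g = 0$ holds for any metric. Applying it to $G^g = \tfrac12 \cH e^{-2\frf_t} u\otimes u = \tfrac12 \cH\, u\otimes (e^{-2\frf_t} u)$, and using that $u$ is parallel ($\nabla^g u = 0$ — this is the defining property of the parabolic pair $(u,[l])$ attached by $\Xi$ to the flow, cf. Proposition \ref{prop:parallelspinorgeneral} and the first equation of \eqref{eq:parallelspinorgeneral}), the divergence collapses to $\tfrac12\bigl(\nabla^g_{u^\sharp}(\cH e^{-2\frf_t})\bigr) u + \tfrac12 \cH e^{-2\frf_t}(\mathrm{div}^g u^\sharp) u$. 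Since $u$ is parallel, $\mathrm{div}^g u^\sharp = 0$ as well, so the Bianchi identity reduces to the first-order ODE $u^\sharp\bigl(\cH e^{-2\frf_t}\bigr) = 0$ along the integral curves of the null parallel vector field $u^\sharp$. Because $u = e^{\frf_t}(e_0+e_1)$ has a nonzero $\dd t$-component ($e^{\frf_t}\beta_t > 0$), these integral curves are transverse to the Cauchy slices and sweep out all of $M = \cI\times\Sigma$ starting from $\Sigma_0$; hence $\cH e^{-2\frf_t}$, being constant along each such curve and vanishing on $\Sigma_0$ (where $\cH=0$ by the constrained-Ricci-flat hypothesis), vanishes identically. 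Since $e^{-2\frf_t}>0$, we get $\cH\equiv 0$, so $\ric^g = 0$ and every slice satisfies both constraints.

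I expect the main obstacle to be bookkeeping rather than conceptual: (i) pinning down the precise sign/normalization dictionary between the ADM constraint quantities $\mathrm{R}_{h_t}-\vert\Theta_t\vert^2+\mathrm{Tr}(\Theta_t)^2$ and $\mathrm{div}_{h_t}(\Theta_t)-\dd\mathrm{Tr}_{h_t}(\Theta_t)$ on one side and the $e_0e_0$, $e_0e_1$ components of $\ric^g$ on the other, via Gauss–Codazzi for the lapse-shift form $g = -\beta_t^2\dd t^2 + h_t$ with zero shift; and (ii) justifying carefully that the parallel spinor flow equations \eqref{eq:globhyperspinorflowI}–\eqref{eq:globhyperspinorflowII} indeed force $\nabla^g u = 0$ for the one-form $u$ produced by $\Xi$ — this is essentially built into Lemma \ref{lemma:spinorglobal} and Theorem \ref{thm:parallelspinorflow} (the flow equations are, by construction, the reduction of $\nabla^g u = 0$, $\nabla^g l = \kappa\otimes u$), so it is a matter of citing the right equivalence rather than recomputing. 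Once those two points are in place, the Bianchi-identity transport argument is short and robust. An alternative, perhaps cleaner, route that avoids the Gauss–Codazzi dictionary entirely: observe directly from Proposition \ref{prop:ricci} that $\ric^g$ is pure-trace-free null type, hence automatically satisfies the vacuum constraints on a slice iff $\cH$ vanishes there, then run the same Bianchi transport; I would present the argument this way to keep the proof self-contained relative to the earlier results.
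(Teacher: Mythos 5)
Your derivation of the transport equation is sound and in fact coincides with the first step of the paper's own proof: taking the divergence of $\ric^g=\tfrac12\,\cH e^{-2\frf_t}\,u\otimes u$, using $\mathrm{R}^g=0$ (nullity of $u$) and $\nabla^g u=0$, one gets exactly $\dd\bigl(\cH e^{-2\frf_t}\bigr)(u^{\sharp})=0$. Your Gauss--Codazzi dictionary identifying both constraint quantities on each slice with components of $G^g=\ric^g$, hence with multiples of $\cH$, is also correct and replaces the paper's citation of \cite[Proposition 2.24]{Murcia:2020zig} for the momentum constraint. Where you diverge from the paper is in how you propagate $\cH=0$ off $\Sigma_0$: the paper rewrites the transport identity as $\mathfrak{D}(\cH)=\rho\,\cH$ for a first-order symmetric hyperbolic operator, invokes the uniqueness theorem for such systems to kill $\cH$ on a neighbourhood of $\Sigma$, and then runs a maximal-subinterval continuity argument to cover all of $\cI$; you instead integrate along the characteristics of $u^{\sharp}$.

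The gap is in the sentence claiming that, because $u$ has a nonzero $\dd t$-component, the integral curves of $u^{\sharp}$ ``sweep out all of $M=\cI\times\Sigma$ starting from $\Sigma_0$.'' Transversality to the slices does not give this. Along an integral curve one has $\dd t(u^{\sharp})=-e^{\frf_t}/\beta_t$, which is nowhere zero but need not be bounded away from zero; a priori a curve through a point $(t_1,p)$ could drift off to spatial infinity in $\Sigma$ while its $t$-coordinate only approaches some $t_*\neq 0$ asymptotically, so that it never meets $\Sigma_0$, and then constancy of $\cH e^{-2\frf_t}$ along that curve tells you nothing about $\cH(t_1,p)$. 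To close this you need a genuinely global input: either (i) the Cauchy property of $\Sigma_0$ for the metric $\Phi(\{\beta_t,\fre^t\})$ --- the maximal integral curves of the nowhere-vanishing null field $u^{\sharp}$ are inextendible causal curves, and in a globally hyperbolic spacetime every inextendible causal curve meets every Cauchy hypersurface, so each curve does hit $\Sigma_0$ --- or (ii) the paper's route via uniqueness for linear symmetric hyperbolic systems (finite propagation speed), which only needs local statements near each slice and is then globalized in $t$ by the maximality argument. Option (i) is legitimate but requires you to justify that the developed metric is globally hyperbolic with $\Sigma_0$ Cauchy (the paper's map $\Phi$ implicitly assumes this), a hypothesis your proposal neither states nor verifies; as written, the sweep-out step is unsupported and is precisely the point where the argument would fail without one of these additions.
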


\begin{proof}
Let $\left\{\beta_t,\fre^t\right\}_{t\in\cI}$ be a parallel spinor flow. Taking the divergence of Equation \eqref{eq:ricci4d} we obtain:
\begin{equation*}
\dd(\cH e^{-2\frf_t}) (u^{\sharp}) = 0\, ,
\end{equation*}

\noindent
which can be equivalently written as follows:
\begin{equation*}
\mathfrak{D}(\cH) = \rho \cH\, ,
\end{equation*}

\noindent
where $\mathfrak{D} = \partial_t + e^t_u$ is a first-order symmetric hyperbolic differential operator and $\rho = 2 (\partial_t \frf_t + e^t_u(\frf_t))$ is a function completely determined by $\left\{\beta_t,\fre^t\right\}_{t\in\cI}$. The fact that we are able to rewrite equation $\dd(\cH e^{-2\frf_t}) (u^{\sharp}) = 0$ in terms of the symmetric hyperbolic differential operator $\mathfrak{D}$ follows from the fact that $u$ is nowhere vanishing and null, which in turn implies that the component $u_t$ can never vanish on $\cI\times \Sigma$ and is therefore of constant sign. Given such $\mathfrak{D}$ and $\rho$ associated to $\left\{\beta_t,\fre^t\right\}_{t\in\cI}$, consider now the initial value problem:
\begin{equation*}
\mathfrak{D}(F) = \rho\, F\, , \qquad F\vert_{\Sigma} = 0\, ,
\end{equation*}

\noindent
for an arbitrary function $F$ on $M$. By the existence and uniqueness theorem for this type of equations, see \cite[Theorem 19]{CortesKronckeLouis}, every solution must be zero on a neighborhood of $\Sigma$. Since $\cH$ is in particular a solution of this equation, it must vanish on a neighborhood of $\Sigma$. Therefore, there exists a subinterval $\cI^{\prime} = (a,b) \subseteq \cI$ containing zero such that $\cH\vert_t = 0$ for every $t\in \cI^{\prime}$. By \cite[Proposition 2.24]{Murcia:2020zig} this implies that the momentum constraint is also satisfied for every $t\in \cI^{\prime}$ and hence the parallel spinor flow preserves the Hamiltonian and momentum constraints on $\cI^{\prime}$. If $\cI^{\prime} = \cI$  we conclude, so assume that $\cI^{\prime} = (a,b) \subset \cI$ is the proper maximal subinterval of $\cI$ for which the result holds. Since the parallel spinor flow $\{\beta_t, \fre^t\}_{t \in \cI}$ is well-defined in $\cI$, then both $\rho$ and $\mathcal{H}$ must be well-defined on $\cI\times \Sigma$. Hence, by point-wise continuity on $\Sigma$ we must have that $\cH\vert_{b}=0$ and therefore we can apply the previous argument to the initial value problem \emph{starting} at $b \in \cI$. Hence there exists an $\varepsilon > 0$ for which the result holds on $(a, b+\varepsilon)$, in contradiction with $(a,b)$ being maximal. Therefore, $\cI^{\prime} = \cI$ and we conclude.
\end{proof}

\noindent
Call a triple $(\Sigma,h,\Theta)$ an \emph{initial vacuum} data if $(h,\Theta)$ satisfies the Hamiltonian and momentum constraints. The previous theorem can be applied to prove an \emph{initial data} characterization of parallel spinors on Ricci flat Lorentzian four-manifolds. 

\begin{corollary}
An initial vacuum data $(\Sigma,h,\Theta)$ admits a Ricci flat Lorentzian development carrying a parallel spinor if and only if there exists a global orthonormal coframe $\fre$ on $\Sigma$ such that $(\fre,\Theta)$ is a parallel Cauchy pair.
\end{corollary}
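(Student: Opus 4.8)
The plan is to deduce the corollary directly from Theorem \ref{thm:preservingconstraints} together with the well-posedness results cited earlier, so that the proof consists of assembling pieces rather than new computation. First I would prove the ``only if'' direction: suppose $(\Sigma,h,\Theta)$ is an initial vacuum data admitting a Ricci flat Lorentzian development $(M,g)$ carrying a parallel spinor. By Theorem \ref{thm:parallelspinorflow} the parallel spinor on $(M,g)$ is encoded by a parallel spinor flow $\{\beta_t,\fre^t\}_{t\in\cI}$ on $\Sigma$ whose associated family of metrics satisfies $h_{\fre^0}=h$; restricting the second equation in \eqref{eq:globhyperspinorflowI} and the second equation in \eqref{eq:globhyperspinorflowII} to $\Sigma_0$ gives precisely the parallel Cauchy differential system \eqref{eq:exderKI}--\eqref{eq:exderKII} for $(\fre,\Theta) := (\fre^0,\Theta_0)$, with $\Theta_0$ the shape operator of $\Sigma_0$. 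One must check that $\Theta_0$ coincides with the given $\Theta$: this is the standard identification of the shape operator of the Cauchy hypersurface in a Lorentzian development with the prescribed initial datum, which I would invoke from \cite{ChoquetBruhatBook}. Hence $(\fre,\Theta)$ is a parallel Cauchy pair in the sense of Definition \ref{def:KCC}, and $h_{\fre}=h$ by construction.

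For the ``if'' direction, assume $(\Sigma,h,\Theta)$ is an initial vacuum data and that there is a global orthonormal coframe $\fre$ on $\Sigma$ with $h_{\fre}=h$ such that $(\fre,\Theta)$ is a parallel Cauchy pair. Since $(\fre,\Theta)$ solves the parallel Cauchy differential system, by the well-posedness of the Cauchy problem for parallel spinors \cite{BaumLeistnerLischewski,LeistnerLischewski} it admits a Lorentzian development $(M,g)$ carrying a parallel spinor; by Theorem \ref{thm:parallelspinorflow} this development is obtained from a parallel spinor flow $\{\beta_t,\fre^t\}_{t\in\cI}$ with $\fre^0=\fre$ and $\Theta_0=\Theta$. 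Now the crucial point: because $(h,\Theta)=(h_{\fre},\Theta)$ satisfies the Hamiltonian and momentum constraints, the Hamiltonian function $\cH$ of this flow vanishes on $\Sigma_0$, so by Theorem \ref{thm:preservingconstraints} (more precisely, the argument in its proof showing the constraints propagate along the flow) $\cH$ vanishes identically on $M$. By Proposition \ref{prop:ricci}, $\ric^g = \tfrac12 \cH e^{-2\frf_t}\, u\otimes u \equiv 0$, so $(M,g)$ is Ricci flat. Thus it is a Ricci flat Lorentzian development of $(\Sigma,h,\Theta)$ carrying a parallel spinor.

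The main obstacle I anticipate is bookkeeping rather than depth: one must be careful that the ``development'' produced by the abstract well-posedness theorem genuinely restricts to the \emph{given} data $(h,\Theta)$ and not merely to data isometric to it, and that the orthonormal-coframe reformulation is compatible with the metric-plus-symmetric-tensor formulation used in the vacuum constraint equations \eqref{eq:EinsteinonSigma}. In other words, the real content is checking that the map $\Psi\colon\Conf(\Sigma)\to\mathrm{Met}(\Sigma)\times\Gamma(T^\ast\Sigma\odot T^\ast\Sigma)$ sends parallel Cauchy pairs satisfying the constraints exactly onto initial vacuum data admitting the desired development — a surjectivity-type statement that follows once one knows (i) existence of a development from \cite{LeistnerLischewski}, (ii) Ricci flatness from Theorem \ref{thm:preservingconstraints} and Proposition \ref{prop:ricci}, and (iii) that the induced metric and second fundamental form match $(h,\Theta)$. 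I would also remark that no choice of $\beta_0$ is needed, exactly as the lapse does not enter the vacuum constraint system, so the correspondence is between the constrained configuration spaces and not affected by this gauge freedom.
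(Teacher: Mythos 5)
Your argument is correct and follows essentially the same route as the paper: the ``only if'' direction is read off from Theorem \ref{thm:parallelspinorflow}, and the ``if'' direction combines the well-posedness results of \cite{LeistnerLischewski,Lischewski:2015cya} with Theorem \ref{thm:preservingconstraints} and the Ricci formula \eqref{eq:ricci4d} of Proposition \ref{prop:ricci} to conclude Ricci flatness of the development. The additional bookkeeping you flag (matching the induced data with $(h,\Theta)$, irrelevance of $\beta_0$) is consistent with how the paper treats these points.
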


\begin{proof}
The \emph{only if} condition follows from Theorem \ref{thm:parallelspinorflow}. For the \emph{if} condition, let $(\Sigma,h,\Theta)$ be an initial vacuum data. If in addition there exists a global orthonormal coframe $\fre$ on $\Sigma$ such that $(\fre,\Theta)$ is a parallel Cauchy pair, then the constraint equations of the initial value problem of a parallel spinor are satisfied. By \cite{LeistnerLischewski,Lischewski:2015cya}, the initial value problem is well-posed and there exists a Lorentzian development of $\Sigma$ carrying a parallel spinor. By Theorem \ref{thm:preservingconstraints}, this Lorentzian development satisfies the Hamiltonian and momentum constraint for every $t\in \cI$, and by Equation \eqref{eq:ricci4d} we conclude that this Lorentzian development is Ricci flat.
\end{proof}

\noindent
Additionally, we obtain the following corollary.

\begin{corollary}
A globally hyperbolic Lorentzian four-manifold $(M,g)$ admitting a parallel spinor is Ricci flat if and only if there exists a Cauchy hypersurface $\Sigma\subset M$ whose Hamiltonian constraint vanishes.
\end{corollary}


\subsection{Left-invariant parallel Cauchy pairs}
 

Let $\Sigma = \G$ be a simply connected three-dimensional Lie group. A pair $(\fre,\Theta) \in \Conf(\G)$ is said to be \emph{left-invariant} if both $\fre$ and $\Theta$ are left-invariant. Given a left-invariant pair $(\fre,\Theta) \in \Conf(\G)$ we write:
\begin{equation*}
\Theta = \Theta_{ab}\, e_a\otimes e_b\, , \quad a, b = u, l, n\, ,
\end{equation*}

\noindent
where summation over repeated indices is understood. For further reference we introduce the following notation:
\begin{equation*}
	\lambda := \sqrt{\Theta_{ul}^2 + \Theta_{un}^2}\, , \quad \theta := \begin{pmatrix}
		\tll & \tln \\
		\tln & \tnn
	\end{pmatrix} \, , \quad T := \mathrm{Tr}(\theta)\, ,  \quad \Delta := \mathrm{Det}(\theta) = \Theta_{ll}\Theta_{nn} - \Theta_{ln}^2\, .
\end{equation*}

\noindent
These will play an important role in the classification of left-invariant Cauchy pairs, which was completed in \cite{Murcia:2020zig} and which we proceed to summarize.

\begin{theorem}\cite[Theorem 4.9]{Murcia:2020zig}
\label{thm:allcauchygroups}
A connected and simply-connected Lie group $\G$ admits left-invariant parallel Cauchy pairs (respectively constrained Ricci flat parallel Cauchy pairs) if and only if $\G$ is isomorphic to one of the Lie groups listed in the table below. If that is the case, a left-invariant shape operator $\Theta$ belongs to a Cauchy pair $(\fre,\Theta)$ for certain left-invariant coframe $\fre$ if and only if $\Theta$ is of the form listed below when written in terms of $\fre = (e_u,e_l,e_n)$:
\renewcommand{\arraystretch}{1.5}
\begin{center}
\begin{tabular}{|  p{1cm}| p{8.8cm} | p{3.8cm} |}
\hline
$\mathrm{G}$ & \emph{Cauchy parallel pair} & \emph{Constrained Ricci flat}  \\ \hline
$\mathbb{R}^3$ & $\Theta=\Theta_{uu} e_u \otimes e_u$ &  $\Theta=\Theta_{uu} e_u \otimes e_u$   \\ \hline
\multirow{2}*{$ \mathrm{E}(1,1)$} & $\Theta=\Theta_{uu} e_u \otimes e_u+ \Theta_{i j} e_i \otimes e_j$ & \multirow{2}*{\emph{Not allowed}} \\ & $i,j=l,n,\, \quad \Theta_{ll}=-\Theta_{nn}$ &  \\ 
\hline \multirow{11}*{$\tau_2 \oplus \mathbb{R}$} & $\Theta=(\tul e_l +\tun e_n) \odot e_u$  & \multirow{2}*{\emph{Not allowed}} \\ & $\tul^2+\tun^2 \neq 0$ &   \\ \cline{2-3} & $\Theta=\Theta_{uu} e_u \otimes e_u+ \Theta_{i j} e_i \otimes e_j$ &  $\Theta=T e_u \otimes e_u+ \Theta_{i j} e_i \otimes e_j$  \\ & $\begin{aligned} &i,j=l,n,\, \\ &T \neq 0\, , \Delta=0 \end{aligned}$ & $\begin{aligned} &i,j=l,n,\, \\ &T \neq 0\, , \Delta=0 \end{aligned}$\\ \cline{2-3} & $\Theta=-T e_u \otimes e_u+\tul e_u \odot e_l+\tll e_l \otimes e_l\, , \quad \tul, \tll \neq 0$ & \emph{Not allowed}  \\ \cline{2-3} & $\Theta=-T e_u \otimes e_u+\tun e_u \odot e_n+\tnn e_n \otimes e_n\, , \quad \tun, \tnn \neq 0$ & \emph{Not allowed}  \\\cline{2-3} & $\Theta=-T e_u\otimes e_u + \tul e_u \odot e_l+ \tun e_u \odot e_n+\Theta_{i j} e_i \otimes e_j\,$ & \multirow{3}*{\emph{Not allowed}} \\  & $\begin{aligned}&i,j=l,n,\, \, \tln\tul\tun \neq 0\, , \\&\tnn=\frac{\tun}{\tul} \tln\, , \tll=\frac{\tul}{\tun} \tln \end{aligned}$ &  \\ & &   \\ \hline\multirow{2}*{$ \tau_{3,\mu}$} & $\Theta=\Theta_{uu} e_u \otimes e_u+ \Theta_{i j} e_i \otimes e_j$ & $\Theta=\left (\frac{T^2-2\Delta}{T} \right ) e_u \otimes e_u+ \Theta_{i j} e_i \otimes e_j$   \\ & $i,j=l,n,\, \quad T, \Delta \neq 0$& $i,j=l,n,\, \quad T, \Delta \neq 0$  \\ \hline
\end{tabular}		
\end{center}
Regarding the case $\G =\tau_{3,\mu}$:
\begin{itemize}[leftmargin=*]
\item If $\tln \neq 0$, then
\begin{equation*}
\mu = \frac{T-\text{\emph{sign}}(T)\sqrt{T^2-4\Delta}}{T+\text{\emph{sign}}(T)\sqrt{T^2-4\Delta}}\, .
\end{equation*}
\item If $\tln=0$ and $\vert\tll\vert \geq \vert \tnn \vert $, then
\begin{equation*}
\mu=\frac{\tnn}{\tll}\,.
\end{equation*}
\item If $\tln=0$ and $\vert \tnn \vert \geq \vert \tll \vert $, then
\begin{equation*}
\mu=\frac{\tll}{\tnn}\,.
\end{equation*}
\end{itemize}
\renewcommand{\arraystretch}{1}
\end{theorem}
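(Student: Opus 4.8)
The plan is to pass to the Lie algebra $\mathfrak{g}$ of $\G$ and to treat the parallel Cauchy differential system \eqref{eq:exderKI}--\eqref{eq:exderKII} as a purely algebraic system for the structure constants. A left-invariant coframe $\fre=(e_u,e_l,e_n)$ is the same datum as a basis of $\mathfrak{g}^{\ast}$, for which $\dd$ is the Chevalley--Eilenberg differential, and a left-invariant $\Theta$ is a constant symmetric matrix $(\Theta_{ab})$ in this basis. The crucial observation is that \eqref{eq:exderKI} does not so much constrain $(\fre,\Theta)$ mutually as it \emph{defines} the bracket: expanding $\Theta(e_a)=\Theta_{ab}e_b$ and using $e_u\wedge e_u=0$ gives $\dd e_u=(\tul e_l+\tun e_n)\wedge e_u$, $\dd e_l=(\tll e_l+\tln e_n)\wedge e_u$ and $\dd e_n=(\tln e_l+\tnn e_n)\wedge e_u$, so via the Maurer--Cartan formula every structure constant is fixed linearly by the entries of $\Theta$. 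The residual freedom in presenting the pair is the stabiliser of the distinguished role of $e_u$, namely $R\in\mO(2)$ acting on $(e_l,e_n)$ together with $e_u\mapsto\pm e_u$, under which $\theta\mapsto R\theta R^{\ast}$, the vector $(\tul,\tun)\mapsto R(\tul,\tun)$, while $\tuu$ is fixed. Hence $\lambda$, $T=\mathrm{Tr}(\theta)$, $\Delta=\mathrm{Det}(\theta)$ and $\tuu$ are the natural invariants organising the classification.

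Next I would impose integrability $\dd^2 e_a=0$, equivalent to the Jacobi identity for the bracket just defined. A short computation of the coefficient of $e_l\wedge e_n\wedge e_u$ shows that $\dd^2 e_u\equiv 0$ automatically, while $\dd^2 e_l=0$ and $\dd^2 e_n=0$ reduce respectively to $\tll\tun=\tln\tul$ and $\tnn\tul=\tln\tun$. These two bilinear relations say exactly that $(-\tun,\tul)$ lies in $\Ker\theta$; thus the Jacobi identity holds if and only if either $\lambda=0$, or $\lambda\neq 0$ together with $\Delta=0$ and $\Ker\theta=\langle(-\tun,\tul)\rangle$. Turning to \eqref{eq:exderKII}: since $\G$ is simply connected every closed left-invariant one-form represents the zero de Rham class, so the content of $[\Theta(e_u)]=0$ is closedness of $\Theta(e_u)$. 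Using \eqref{eq:exderKI} together with the two Jacobi relations one finds $\dd\Theta(e_u)=(\tuu+T)\,\dd e_u$, so closedness is automatic when $\lambda=0$ and forces $\tuu=-T$ when $\lambda\neq 0$. This single dichotomy already explains the appearance of $\tuu=-T$ in every $\lambda\neq 0$ row of the table.

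The heart of the argument is then a case-by-case normalisation and identification. Using the $\mO(2)$ freedom I would, when $\lambda=0$, orthogonally diagonalise $\theta$ (so $\tln=0$) and split according to $\rk(\theta)\in\{0,1,2\}$, and when $\lambda\neq 0$ align $(\tul,\tun)$ with an axis and use $\Delta=0$ to bring $\theta$ to its rank $\leq 1$ normal form; each choice yields an explicit set of structure constants. One then reads off the isomorphism type of $\mathfrak{g}$ from standard invariants---abelianness, nilpotency, unimodularity, and the eigenvalues of $\ad$ on the derived ideal---to match $\mathbb{R}^3$, $\mathrm{E}(1,1)$, $\tau_2\oplus\mathbb{R}$ and $\tau_{3,\mu}$; in the last case $\ad$ on the codimension-one abelian ideal is conjugate to $\theta$, and normalising one eigenvalue to one exhibits the second as the modulus $\mu$, giving the stated formula in terms of $T$, $\Delta$ and $\mathrm{sign}(T)$. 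Conversely, exhibiting each listed normal form shows every such group actually occurs, closing the classification. Finally, for the \emph{constrained Ricci flat} column I would add the Hamiltonian constraint $\mathrm{R}_{h_{\fre}}=\vert\Theta\vert^2_{h_{\fre}}-\mathrm{Tr}_{h_{\fre}}(\Theta)^2$; since the structure constants---and hence, by Milnor's formulas, $\mathrm{R}_{h_{\fre}}$---are quadratic in $\Theta$, this is a single polynomial equation, and for parallel Cauchy pairs the momentum constraint follows from it (as in the proof of Theorem \ref{thm:preservingconstraints}), so constrained Ricci-flatness reduces to solving that scalar equation case by case (for instance $\tuu=(T^2-2\Delta)/T$ for $\tau_{3,\mu}$), ruling out $\mathrm{E}(1,1)$ and the $\lambda\neq 0$ strata of $\tau_2\oplus\mathbb{R}$.

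The step I expect to be the main obstacle is the identification of the abstract isomorphism type from the $\Theta$-determined structure constants, and in particular the extraction of the modulus $\mu$: the brackets depend linearly on $\Theta$, but the isomorphism class depends on them through the eigenvalues of $\ad$, so one must diagonalise carefully and, crucially, verify that the $\mO(2)\times\{\pm1\}$ normalisation has exhausted the cases without double-counting and that distinct strata genuinely yield non-isomorphic groups.
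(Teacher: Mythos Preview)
The paper does not prove this theorem: it is quoted from \cite[Theorem~4.9]{Murcia:2020zig} and used here only as input for Section~\ref{sec:leftinvariantspinorflow}. So there is no proof in the present paper against which to compare. That said, your outline is the natural strategy and agrees with the argument one finds in the cited reference: interpret \eqref{eq:exderKI} as determining the Chevalley--Eilenberg differential (hence the bracket) linearly from $\Theta$, impose $\dd^2=0$ to obtain the two bilinear Jacobi relations $\Theta_{ll}\Theta_{un}=\Theta_{ln}\Theta_{ul}$ and $\Theta_{nn}\Theta_{ul}=\Theta_{ln}\Theta_{un}$, reduce the cohomological condition \eqref{eq:exderKII} to closedness via simply-connectedness (yielding $\Theta_{uu}=-T$ when $\lambda\neq 0$), and finally read off the isomorphism type from the eigenvalues of $\ad$ on the derived ideal. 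Your intermediate computations (the Jacobi relations, the identity $\dd\Theta(e_u)=(\Theta_{uu}+T)\,\dd e_u$ modulo those relations, the reduction of the constrained Ricci-flat condition to the scalar Hamiltonian constraint) are correct.

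One presentational caveat: the table is \emph{not} a list of $\mO(2)\times\{\pm1\}$ normal forms, but a list of all admissible $\Theta$'s in a given coframe $\fre$ on the specified group. Your normalisation step (diagonalising $\theta$ when $\lambda=0$, aligning $(\Theta_{ul},\Theta_{un})$ when $\lambda\neq 0$) is the right device for \emph{establishing} the classification, but to reproduce the table verbatim you must then list the full orbit in each stratum; this is why, for example, the $\tau_{3,\mu}$ row retains an arbitrary $\Theta_{ij}$ and the formula for $\mu$ distinguishes $\Theta_{ln}\neq 0$ from $\Theta_{ln}=0$. This is bookkeeping rather than a mathematical gap, and you have correctly flagged the identification step as the place where care is needed.
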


\noindent
The previous theorem will be used extensively in the next section. We have the following corollary.

\begin{corollary}
\label{cor:isog}
Let $\G$ be a connected and simply connected Lie group with a left-invariant Cauchy pair. Then the isomorphism type of $\G$ is prescribed by $T$, $\Delta$ and $\lambda$ as follows:
\begin{itemize}
\item If $T=\Delta=\lambda=0$, then $\G \simeq \mathbb{R}^3$.
\item If $T=\lambda=0$ but $\Delta \neq 0$, then $\G \simeq \mathrm{E}(1,1)$.
\item If  $\Delta = 0$ but $\lambda^2 + T^2 \neq 0$, then $\G \simeq \tau_2\oplus \mathbb{R}$.
\item If $T, \Delta \neq 0$ and $\lambda=0$, then $\G \simeq \tau_{3,\mu}$.
\end{itemize}

\noindent
Observe that the case $\lambda \neq 0$ and $\Delta \neq 0$ is not allowed. 
\end{corollary}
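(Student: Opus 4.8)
The plan is to read the corollary off Theorem~\ref{thm:allcauchygroups} by a finite inspection of its table. Fix a left-invariant parallel Cauchy pair $(\fre,\Theta)$ on $\G$. By Theorem~\ref{thm:allcauchygroups}, $\G$ is isomorphic to exactly one of $\mathbb{R}^3$, $\mathrm{E}(1,1)$, $\tau_2\oplus\mathbb{R}$ or $\tau_{3,\mu}$, and, with respect to the associated coframe $\fre$, the shape operator $\Theta$ takes one of the normal forms listed there. I would then, for each of the four groups, read off the block $\theta$ and the transverse components $\tul,\tun$ and compute the invariants $(T,\Delta,\lambda)$, checking that distinct groups land in disjoint regions of $(T,\Delta,\lambda)$-space.

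The computations are routine. For $\G=\mathbb{R}^3$ one has $\theta=0$ and $\tul=\tun=0$, hence $T=\Delta=\lambda=0$. For $\G=\mathrm{E}(1,1)$ one has $\tul=\tun=0$, so $\lambda=0$, and $\tll=-\tnn$, so $T=0$; here I would additionally observe that $\theta\neq0$ necessarily, since otherwise $\Theta$ reduces to $\Theta_{uu}\,e_u\otimes e_u$ and the structure equations~\eqref{eq:exderKI} force $\dd\fre=0$, i.e.\ $\Lie(\G)$ abelian, contradicting $\G\simeq\mathrm{E}(1,1)$; as a nonzero traceless symmetric $2\times2$ matrix has strictly negative determinant, $\Delta\neq0$. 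For $\G=\tau_2\oplus\mathbb{R}$, inspecting the five sub-rows one finds that the block $\theta$ is always singular --- it is either $0$, or has a vanishing eigenvalue, or satisfies $\tll\tnn=\tln^2$ by the relations imposed in the last row --- so that $\Delta=0$ throughout, while in each sub-row either $T\neq0$ or $\lambda\neq0$, hence $T^2+\lambda^2\neq0$. Finally, for $\G=\tau_{3,\mu}$ one reads $\tul=\tun=0$, so $\lambda=0$, together with $T\neq0$ and $\Delta\neq0$.

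Collecting these, the four rows of the table fall respectively into the four conditions (i) $T=\Delta=\lambda=0$, (ii) $T=\lambda=0$ and $\Delta\neq0$, (iii) $\Delta=0$ and $T^2+\lambda^2\neq0$, (iv) $T,\Delta\neq0$ and $\lambda=0$, and these are pairwise mutually exclusive; since every left-invariant Cauchy pair on any such $\G$ lies in one of the four rows, the triple $(T,\Delta,\lambda)$ determines the isomorphism type of $\G$ unambiguously, which is the content of the corollary. Moreover $\lambda\neq0$ occurs only in the $\tau_2\oplus\mathbb{R}$ family, where $\Delta=0$; hence no Cauchy pair has $\lambda\neq0$ and $\Delta\neq0$ simultaneously, proving the last assertion. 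The only point in the argument that is not sheer book-keeping is the observation that the $\mathrm{E}(1,1)$ normal form forces $\theta\neq0$, and therefore $\Delta\neq0$; everything else is a direct reading of the finitely many rows of Theorem~\ref{thm:allcauchygroups}.
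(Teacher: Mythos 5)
Your proposal is correct and takes essentially the same route as the paper, which states the corollary as a direct reading of the table in Theorem~\ref{thm:allcauchygroups}: compute $(T,\Delta,\lambda)$ for each normal form and note that the four resulting regions are pairwise disjoint, with $\lambda\neq 0$ occurring only where $\Delta=0$. Your additional observation that the $\mathrm{E}(1,1)$ row forces $\theta\neq 0$ (via the structure equations, since $\theta=0$ would make the coframe closed and the group abelian), hence $\Delta<0$, is a small but genuine point of care that the paper leaves implicit.
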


\noindent
We are using standard notation for the groups $\G$ as explained for example in \cite[Appendix A]{Freibert}.
 

\section{Left-invariant parallel spinor flows}
\label{sec:leftinvariantspinorflow}


In this section we introduce the notion of left-invariant parallel spinor flow and solve it explicitly. 


\subsection{Reformulation }


Let $\G$ be a simply connected three-dimensional Lie-group. We say that a parallel spinor flow $\left\{ \beta_t ,\fre^t \right\}_{t\in \cI}$ defined on $\G$ is \emph{left-invariant} if both $\beta_t$ and $\fre^t$ are left-invariant for every $t\in \cI$. The latter condition immediately implies that $h_{\fre^t}$ is a left-invariant Riemannian metric and $\beta_t$ is constant for every $t\in \cI$. Let $\left\{ \fre^t\right\}_{t\in\cI}$ be a family of left-invariant coframes on $\G$. Any square matrix $\cA\in \mathrm{Mat}(3,\mathbb{R})$ acts naturally on $\left\{ \fre^t\right\}_{t\in \cI}$ as follows: 
\begin{equation*}
\cA(\fre^t) := 
\begin{pmatrix}
\sum_b \cA_{ub} e^t_b  \\
\sum_b \cA_{lb} e^t_b  \\
\sum_b \cA_{nb} e^t_b
\end{pmatrix}
\end{equation*}

\noindent
where we label the entries $\cA_{ab}$ of $\cA$ by the indices $a,b = u, l, n$. As a direct consequence of Theorem \ref{thm:parallelspinorflow} we have the following result.

\begin{proposition}
A simply connected three-dimensional Lie group  $\G$ admits a left-invariant parallel spinor flow if and only if there exists a smooth family of non-zero constants $\left\{ \beta_t\right\}_{t\in \cI}$ and a family $\left\{ \fre^t \right\}_{t\in \cI}$ of left-invariant coframes on $\G$ satisfying the following differential system:
\begin{eqnarray}
\label{eq:leftinv}
\partial_t \fre^t  +  \beta_t\Theta_t(\fre^t) = 0\, , \quad \dd \fre^t  = \Theta_t(\fre^t) \wedge e^t_u\, , \quad \partial_t(\Theta_t(e^t_u )) = 0\, , \quad \dd\Theta_t(e^t_u ) = 0\, ,
\end{eqnarray}
	
\noindent
to which we will refer as the left-invariant (real) parallel spinor flow equations.
\end{proposition}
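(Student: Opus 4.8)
The plan is to derive the left-invariant system \eqref{eq:leftinv} directly from the general parallel spinor flow equations \eqref{eq:globhyperspinorflowI} and \eqref{eq:globhyperspinorflowII} of Theorem \ref{thm:parallelspinorflow}, exploiting the two structural simplifications that left-invariance forces: first, $\beta_t$ is a left-invariant function on $\G$, hence constant on $\G$ for each $t$, so $\dd\beta_t = 0$; second, $\fre^t$ and $\Theta_t$ are left-invariant, so $\dd$ acts on them through the structure constants of $\mathfrak{g}$ and $H^1(\G,\mathbb{R})$ plays no role once we restrict to left-invariant representatives. I would begin by spelling out that a globally hyperbolic metric of the form $-\beta_t^2\dd t\otimes\dd t + h_{\fre^t}$ with $\{\beta_t,\fre^t\}$ left-invariant admits a left-invariant parallel spinor flow precisely when $\{\beta_t,\fre^t\}$ solves \eqref{eq:globhyperspinorflowI}--\eqref{eq:globhyperspinorflowII}, since left-invariance is an extra constraint compatible with (indeed preserved by) those equations.

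Next I would substitute $\dd\beta_t = 0$ into the first equation of \eqref{eq:globhyperspinorflowI}. The term $\dd\beta_t(e^t_a)\,e^t_u$ vanishes, as does the right-hand side $\delta_{au}\dd\beta_t$, leaving exactly $\partial_t e^t_a + \beta_t\Theta_t(e^t_a) = 0$ for $a = u,l,n$, which is the first equation of \eqref{eq:leftinv} in coframe notation $\partial_t\fre^t + \beta_t\Theta_t(\fre^t) = 0$. The second equation of \eqref{eq:globhyperspinorflowI}, $\dd\fre^t = \Theta_t(\fre^t)\wedge e^t_u$, is simply carried over verbatim and is automatically a well-defined equation among left-invariant forms. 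For \eqref{eq:globhyperspinorflowII}: the first equation $\partial_t(\Theta_t(e^t_u)) + \dd(\dd\beta_t(e^t_u)) = 0$ collapses, using $\dd\beta_t = 0$, to $\partial_t(\Theta_t(e^t_u)) = 0$. The cohomological condition $[\Theta_t(e^t_u)] = 0 \in H^1(\Sigma,\mathbb{R})$ needs a short argument: since $\Theta_t$ and $\fre^t$ are left-invariant, $\Theta_t(e^t_u)$ is a left-invariant one-form, and I claim exactness within the left-invariant complex, i.e. $\dd\Theta_t(e^t_u) = 0$, is the right reformulation. One direction is immediate (exact implies closed); for the other, note that the second equation of \eqref{eq:globhyperspinorflowI}, written out as a Cartan structure equation, already forces $\dd$ of the relevant combinations, and on a simply connected Lie group with a left-invariant closed one-form, the primitive can be taken left-invariant only if the form is actually zero in $H^1(\mathfrak g)$ — but here the content is just that we replace the topological condition $[\Theta_t(e^t_u)]=0$ by the pointwise-checkable $\dd\Theta_t(e^t_u)=0$, which is what survives and is usable when solving the flow case by case.

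The main obstacle, and the only place where more than bookkeeping is needed, is the handling of the condition $[\Theta_t(e^t_u)] = 0 \in H^1(\Sigma,\mathbb{R})$ versus $\dd\Theta_t(e^t_u) = 0$. In the general theorem these differ — closedness does not imply exactness on a manifold with nontrivial $H^1$ — but in the left-invariant setting one must argue that for the purpose of reconstructing the spinor flow it suffices to impose closedness: indeed, in the converse direction of Theorem \ref{thm:parallelspinorflow} the primitive $\bar\frf_t$ of $-\Theta_t(e^t_u)$ is used, and when everything is left-invariant on a simply connected $\G$ a closed left-invariant one-form is exact (with a possibly non-left-invariant primitive, but that is harmless since $\frf_t$ only enters through $\dd\frf_t$ and $\partial_t\frf_t$). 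So I would close the argument by invoking Theorem \ref{thm:parallelspinorflow} in both directions, checking that each of its four equations reduces, under $\dd\beta_t=0$ and left-invariance, to the corresponding equation in \eqref{eq:leftinv}, and conversely that a solution of \eqref{eq:leftinv} yields via $h_{\fre^t} = \sum_a e^t_a\otimes e^t_a$ and $\Theta_t = -\frac{1}{2\beta_t}\partial_t h_{\fre^t}$ a genuine parallel spinor flow. I expect the write-up to be quite short, essentially a paragraph of substitutions plus the one remark about cohomology on a simply connected Lie group.
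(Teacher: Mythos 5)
Your proposal is correct and follows essentially the same route as the paper, which presents this proposition as a direct consequence of Theorem \ref{thm:parallelspinorflow}: left-invariance forces $\beta_t$ to be constant, so $\dd\beta_t=0$ collapses \eqref{eq:globhyperspinorflowI}--\eqref{eq:globhyperspinorflowII} to \eqref{eq:leftinv}, and since $\G$ is simply connected $H^1(\G,\mathbb{R})=0$, making the exactness condition $[\Theta_t(e^t_u)]=0$ equivalent to the closedness condition $\dd\Theta_t(e^t_u)=0$. Your detour through ``exactness within the left-invariant complex'' is unnecessary (the de Rham argument you end with is the whole point, and the primitive $\frf_t$ need not be left-invariant), but your conclusion and the substitution bookkeeping are right.
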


\noindent
We will refer to solutions $\left\{ \beta_t, \fre^t \right\}_{t\in \cI}$ of the left-invariant parallel spinor flow equations as \emph{left-invariant parallel spinor flows}. Given a parallel spinor flow $\left\{ \beta_t, \fre^t \right\}_{t\in \cI}$, we write:
\begin{equation*}
\Theta^t = \sum_{a,b} \Theta^t_{ab} e^t_a\otimes e^t_n\, , \qquad a, b = u, l , n\, ,
\end{equation*}

\noindent
in terms of uniquely defined functions $(\Theta^t_{ab})$ on $\cI$.

\begin{lemma}
\label{lemma:intecondli}
Let $\{\beta_t, \fre^t\}_{t \in \mathcal{I}}$ be a left-invariant parallel spinor flow. The following equations hold:
\begin{equation*}
\begin{split}
\partial_t \tuu^t=\beta_t ((\tuu^t)^2 + (\tul^t )^2+(\tun^t)^2)\,,& \quad \partial_t \tul^t=\partial_t \tun^t=0\, , \quad \partial_t \tll^t=\beta_t\tll^t \tuu^t-\beta_t (\tul^t)^2\, \\ 
\partial_t \tln^t= \beta_t  \tln^t\tuu^t-\beta_t \tun^t \tul^t\,, & \quad \partial_t \tnn^t=\beta_t\tnn^t \tuu^t-\beta_t
(\tun^t)^2\, , \\
\tln^t \tul^t=\tll^t \tun^t\, ,& \quad \tln^t \tun^t=\tnn^t \tul^t\, , \\ 
\tll^t \tul^t+\tln^t\tun^t+\tul^t\tuu^t=0\,, &\quad \tln^t \tul^t+\tnn^t \tun^t+\tun^t \tuu^t=0\, .
\end{split}
\end{equation*}

\noindent
In particular, $\tul^t=\tul$ and $\tun^t=\tun$ for some constants $\tul,\tun \in \mathbb{R}$.
\end{lemma}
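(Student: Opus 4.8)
The proof is a direct computation: expand the four left-invariant parallel spinor flow equations \eqref{eq:leftinv} componentwise with respect to the family of coframes $\{\fre^t\}_{t\in\cI}$, using the structure equations to translate the exterior derivative condition into algebraic relations among the $\Theta^t_{ab}$ and the structure constants. First I would exploit the flow equation $\partial_t \fre^t + \beta_t \Theta_t(\fre^t) = 0$, which says $\partial_t e^t_a = -\beta_t \sum_b \Theta^t_{ab} e^t_b$. From $h_{\fre^t} = \sum_a e^t_a \otimes e^t_a$ one gets $\partial_t h_{\fre^t} = -2\beta_t \Theta^t_{ab}\, e^t_a\odot e^t_b$, consistent with $\Theta_t = -\tfrac{1}{2\beta_t}\partial_t h_{\fre^t}$; but more usefully, differentiating the orthonormality relations $h_t^{-1}(e^t_a, e^t_b) = \delta_{ab}$ in $t$ and combining with the equation $\partial_t(\Theta_t(e^t_u)) = 0$ expanded in the moving coframe yields the ODEs for the $\Theta^t_{ab}$.

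\textbf{Key steps.} The equation $\partial_t(\Theta_t(e^t_u)) = 0$ is the crucial one: writing $\Theta_t(e^t_u) = \sum_b \Theta^t_{ub} e^t_b$ and using $\partial_t e^t_b = -\beta_t\sum_c \Theta^t_{bc} e^t_c$, one obtains $\sum_b(\partial_t \Theta^t_{ub})e^t_b = \beta_t \sum_{b,c}\Theta^t_{ub}\Theta^t_{bc} e^t_c$, i.e. $\partial_t \Theta^t_{ua} = \beta_t \sum_b \Theta^t_{ub}\Theta^t_{ba}$. Using symmetry $\Theta^t_{ab} = \Theta^t_{ba}$, the $a=u$ component gives $\partial_t \Theta^t_{uu} = \beta_t\big((\Theta^t_{uu})^2 + (\Theta^t_{ul})^2 + (\Theta^t_{un})^2\big)$, and the $a=l,n$ components give the stated equations for $\partial_t\Theta^t_{ul}$ and $\partial_t\Theta^t_{un}$ — except that one must then show these vanish. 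That is where the algebraic relations enter. For the remaining ODEs (for $\Theta^t_{ll}, \Theta^t_{ln}, \Theta^t_{nn}$) I would differentiate $\Theta_t = -\tfrac{1}{2\beta_t}\partial_t h_{\fre^t}$, or equivalently compute $\partial_t \Theta^t_{ab}$ from $\Theta^t_{ab} = h_t^{-1}(\Theta_t(e^t_a), e^t_b)$ and the flow equation, obtaining $\partial_t\Theta^t_{ab} = \beta_t\sum_c(\Theta^t_{ac}\Theta^t_{cb}) - (\text{a term from differentiating }\Theta_t\text{ itself})$; matching this against the known form of $\Theta_t$ yields the displayed formulas. Next, expanding $\dd\fre^t = \Theta_t(\fre^t)\wedge e^t_u$ in the coframe and comparing the $e^t_a\wedge e^t_b$ coefficients against the Maurer--Cartan structure constants produces the purely algebraic relations $\Theta^t_{ln}\Theta^t_{ul} = \Theta^t_{ll}\Theta^t_{un}$, $\Theta^t_{ln}\Theta^t_{un} = \Theta^t_{nn}\Theta^t_{ul}$, $\Theta^t_{ll}\Theta^t_{ul} + \Theta^t_{ln}\Theta^t_{un} + \Theta^t_{ul}\Theta^t_{uu} = 0$, and $\Theta^t_{ln}\Theta^t_{ul} + \Theta^t_{nn}\Theta^t_{un} + \Theta^t_{un}\Theta^t_{uu} = 0$; these are exactly the restrictions to $\Sigma_t$ of the parallel Cauchy differential system, which hold for each $t$ since $\{\beta_t,\fre^t\}$ is a flow. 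Finally, feeding these algebraic relations back into the ODEs $\partial_t\Theta^t_{ul} = \beta_t(\Theta^t_{uu}\Theta^t_{ul} + \Theta^t_{ul}\Theta^t_{ll} + \Theta^t_{un}\Theta^t_{ln})$ shows the right-hand side equals $\beta_t\cdot 0 = 0$ by the third displayed algebraic relation; similarly $\partial_t\Theta^t_{un} = 0$. Hence $\Theta^t_{ul} \equiv \Theta_{ul}$ and $\Theta^t_{un}\equiv\Theta_{un}$ are constants, which gives the last sentence.

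\textbf{Main obstacle.} The conceptually trivial but bookkeeping-heavy part is keeping track of which relations are genuinely new (coming from the flow equation at time $t$) versus which are just the Cauchy constraints propagated in $t$; a priori one must check that the structure-equation computation $\dd\fre^t = \Theta_t(\fre^t)\wedge e^t_u$ really does reduce cleanly to the four algebraic identities and does not leave leftover $e^t_l\wedge e^t_n$-type terms. The other delicate point is deriving the $\partial_t\Theta^t_{ll},\partial_t\Theta^t_{ln},\partial_t\Theta^t_{nn}$ equations consistently: one has two expressions for $\partial_t\Theta^t_{ab}$ (one from $\Theta_t = -\tfrac1{2\beta_t}\partial_t h_{\fre^t}$, one from the moving-frame differentiation), and reconciling the apparent discrepancy $\beta_t\Theta^t_{ll}\Theta^t_{uu} - \beta_t(\Theta^t_{ul})^2$ versus the naive $\beta_t\sum_c\Theta^t_{lc}\Theta^t_{cl}$ requires using the algebraic relations $\Theta^t_{ll}\Theta^t_{ul} + \Theta^t_{ln}\Theta^t_{un} = -\Theta^t_{ul}\Theta^t_{uu}$ along the way. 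Once the algebraic relations are in hand, all the ODEs follow by substitution, so the real content is extracting those relations correctly from the structure equations.
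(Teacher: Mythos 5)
Your handling of the $u$-row and of the algebraic relations is essentially sound and parallels the paper: expanding $\partial_t(\Theta_t(e^t_u))=0$ in the moving coframe with $\partial_t e^t_b=-\beta_t\Theta^t_{bc}e^t_c$ gives $\partial_t\Theta^t_{ub}=\beta_t\Theta^t_{ua}\Theta^t_{ab}$, and the relations that kill $\partial_t\tul^t$ and $\partial_t\tun^t$ do hold at every $t$ — though note that your mechanism for them is stated loosely: the last line of the lemma comes from the fourth flow equation $\dd\Theta_t(e^t_u)=0$ and the third line from $\dd^2 e^t_a=0$ applied to the structure equation, not from merely "comparing coefficients against the Maurer--Cartan structure constants"; since both ingredients are among (or immediate consequences of) the flow equations \eqref{eq:leftinv}, this part is repairable.

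The genuine gap is your derivation of the ODEs for $\tll^t$, $\tln^t$, $\tnn^t$. The flow equations never prescribe $\partial_t\Theta_t$ directly, and the two "expressions" you propose to reconcile are not independent pieces of information: $\Theta_t=-\tfrac{1}{2\beta_t}\partial_t h_{\fre^t}$ together with $\partial_t\fre^t+\beta_t\Theta_t(\fre^t)=0$ is an identity (this is precisely the content of Lemma \ref{lemma:cKTheta}), while differentiating $\Theta^t_{ab}=\Theta_t(e^{t\,\sharp}_a,e^{t\,\sharp}_b)$ in $t$ reintroduces the unknown $\partial_t\Theta_t$, so "matching against the known form of $\Theta_t$" is circular — at this stage there is no known form, and the algebraic relations alone cannot produce a first-order ODE for the lower block. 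The missing idea, which is how the paper argues, is to use the commutation of $\partial_t$ with $\dd$: apply $\dd$ to the evolution equation $\partial_t e^t_a+\beta_t\Theta_t(e^t_a)=0$ and substitute the structure equation $\dd e^t_a=\Theta^t_{ab}\,e^t_b\wedge e^t_u$ both in $\partial_t(\dd e^t_a)$ and in $\dd(\Theta_t(e^t_a))$. Reading off the coefficient of $e^t_b\wedge e^t_u$ gives
\begin{equation*}
\partial_t\Theta^t_{ab}=\beta_t\left(\Theta^t_{ab}\Theta^t_{uu}-\Theta^t_{au}\Theta^t_{ub}\right)\, ,\qquad a,b\in\{l,n\}\, ,
\end{equation*}
which are exactly the three remaining ODEs of the lemma, while the $e^t_l\wedge e^t_n$ coefficient only reproduces the algebraic relations already obtained. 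Without this step the equations for $\partial_t\tll^t$, $\partial_t\tln^t$, $\partial_t\tnn^t$ — which are needed later to integrate the flow — are not established.
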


\begin{proof}
A direct computation shows that equation $\partial_t(\Theta_t(e^t_u )) = 0$ is equivalent to:
\begin{equation*}
\partial_t \Theta^t_{ub} = \beta_t \Theta^t_{ua} \Theta^t_{ab}\, .
\end{equation*}

\noindent
On the other hand, equation $\dd \Theta_t(e^t_u) = 0$ is equivalent to:
\begin{equation*}
\Theta^t_{ua} \Theta^t_{al} = 0\, , \qquad \Theta^t_{ua} \Theta^t_{an} = 0\, .
\end{equation*}

\noindent
The previous equations can be combined into the following equivalent conditions:
\begin{eqnarray*}
& \partial_t \tuu^t=\beta_t ((\tuu^t)^2 + (\tul^t )^2+(\tun^t)^2)\, , \quad \partial_t \tul^t=\partial_t \tun^t=0\, ,\\
& \tll^t \tul^t+\tln^t\tun^t+\tul^t\tuu^t=0\, , \quad \tln^t \tul^t+\tnn^t \tun^t+\tun^t \tuu^t=0\, ,
\end{eqnarray*}

\noindent
which recover five of the equations in the statement. Similarly, equation $\dd (\Theta_t(\fre^t) \wedge e^t_u) = 0$ is equivalent to:
\begin{equation*}
\tln^t \tul^t=\tll^t \tun^t\, ,\qquad \tln^t \tun^t=\tnn^t \tul^t\, ,
\end{equation*}

\noindent
which yields the third line of equations in the statement. We take now the exterior derivative of the first equation in \eqref{eq:leftinv} and combine the result with the second equation in \eqref{eq:leftinv}:
\begin{equation*}
\dd (\partial_t e^t_a  +  \beta_t\Theta_t(e^t_a) ) =  \partial_t (\Theta^t_{ab} e^t_b\wedge e^t_u) + \beta_t \Theta^t_{ab} \Theta^t_{bc} e^t_c\wedge e^t_u  =  (\partial_t \Theta^t_{ab} \delta_{uc} - \beta_t \Theta^t_{ab} \Theta^t_{uc}) e^t_b\wedge e^t_c = 0\, .
\end{equation*}
\noindent
Expanding the previous equation we obtain the remaining three equations in the statement and we conclude.
\end{proof}

\begin{remark}
We will refer to the equations of Lemma \ref{lemma:intecondli} as the \emph{integrability conditions} of the left-invariant parallel spinor flow.
\end{remark}

\noindent
The following observation is crucial in order to \emph{decouple} the left-invariant parallel spinor flow equations.

\begin{lemma}
\label{lemma:cKTheta}
A pair $\left\{ \beta_t, \fre^t \right\}_{t\in \cI}$ is a left-invariant parallel spinor flow if and only if there exists  a family of left-invariant two-tensors $\left\{ \cK_t \right\}_{t\in \cI}$ such that the following equations are satisfied:
\begin{eqnarray*}
\partial_t \fre^t  +  \beta_t\cK_t(\fre^t) = 0\, , \quad \dd \fre^t  = \cK_t(\fre^t) \wedge e^t_u\, , \quad \partial_t(\cK_t(e^t_u )) = 0\, , \quad \dd(\cK_t(e^t_u)) = 0\, .
\end{eqnarray*}
\end{lemma}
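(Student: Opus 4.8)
The plan is to prove this as an equivalence by showing that the distinguished choice $\cK_t = \Theta_t$ always works in one direction, and that any valid $\cK_t$ is forced to equal $\Theta_t$ in the other direction. The statement says a pair $\{\beta_t,\fre^t\}_{t\in\cI}$ is a left-invariant parallel spinor flow if and only if such a family of two-tensors $\{\cK_t\}_{t\in\cI}$ exists. The ``only if'' direction is immediate: given a left-invariant parallel spinor flow, simply set $\cK_t := \Theta_t = -\tfrac{1}{2\beta_t}\partial_t h_{\fre^t}$; then the displayed equations for $\{\cK_t\}$ are precisely the left-invariant parallel spinor flow equations \eqref{eq:leftinv} with $\Theta_t$ substituted, which hold by hypothesis.

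For the ``if'' direction, suppose a family $\{\cK_t\}_{t\in\cI}$ of left-invariant two-tensors satisfies the four displayed equations. I would first use the first of these, $\partial_t\fre^t + \beta_t\cK_t(\fre^t) = 0$, to compute $\partial_t h_{\fre^t}$: differentiating $h_{\fre^t} = \sum_a e^t_a\otimes e^t_a$ and substituting gives $\partial_t h_{\fre^t} = -\beta_t\sum_a(\cK_t(e^t_a)\otimes e^t_a + e^t_a\otimes\cK_t(e^t_a))$, which expresses $\partial_t h_{\fre^t}$ in terms of the symmetric part of $\cK_t$ relative to the coframe $\fre^t$. Hence $\Theta_t := -\tfrac{1}{2\beta_t}\partial_t h_{\fre^t}$ equals the symmetrization of $\cK_t$ with respect to $h_{\fre^t}$. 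The task then reduces to showing $\cK_t$ is automatically $h_{\fre^t}$-symmetric, i.e. that the displayed equations force the antisymmetric part of $\cK_t$ to vanish; once that is established, $\cK_t = \Theta_t$ and the four displayed equations become exactly \eqref{eq:leftinv}, so $\{\beta_t,\fre^t\}$ is a left-invariant parallel spinor flow.

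The key step, and the one I expect to be the main obstacle, is showing that the antisymmetric part of $\cK_t$ vanishes. Write $\cK_t = \Theta_t + A_t$ where $A_t$ is the $h_{\fre^t}$-antisymmetric part, i.e. $A_t = \sum_{a,b} A^t_{ab}\, e^t_a\otimes e^t_b$ with $A^t_{ab} = -A^t_{ba}$. The structure equation $\dd\fre^t = \cK_t(\fre^t)\wedge e^t_u$ then reads $\dd\fre^t = \Theta_t(\fre^t)\wedge e^t_u + A_t(\fre^t)\wedge e^t_u$; comparing with what the Cartan structure equations would give for an $h_{\fre^t}$-orthonormal coframe shows that the $A_t$-contribution must be consistent with the Levi-Civita connection being torsion-free and metric, and a short computation (contracting indices, using $A^t_{ab} = -A^t_{ba}$, and exploiting that $A_t(e^t_u)$ must be closed by the fourth displayed equation together with $\partial_t(\cK_t(e^t_u)) = 0$ and $\partial_t(\Theta_t(e^t_u)) = 0$) pins down all components of $A_t$. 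Concretely, I would extract from $\dd\fre^t = \cK_t(\fre^t)\wedge e^t_u$ the wedge-coefficients in the basis $\{e^t_b\wedge e^t_c\}$, observe that only the $b=u$ or $c=u$ terms survive on the right-hand side, and match this against the known structure constants; the antisymmetry of $A_t$ combined with the fact that the right-hand side has a preferred direction $e^t_u$ will force $A^t_{ln} = A^t_{ul} = A^t_{un} = 0$. I would double-check the decoupled equations against Lemma \ref{lemma:intecondli} to confirm that the integrability conditions derived there are exactly what is needed for $\Theta_t$ (equivalently the now-symmetric $\cK_t$) to close up into an honest parallel spinor flow. The subtlety lies in bookkeeping the index contractions correctly, but no genuinely new idea beyond Cartan's structure equations and the metricity of the Levi-Civita connection should be required.
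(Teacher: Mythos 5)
Your ``only if'' direction and your opening computation are exactly right and coincide with the paper's: differentiating $h_{\fre^t}$ and using $\partial_t\fre^t=-\beta_t\cK_t(\fre^t)$ shows that $\Theta_t:=-\tfrac{1}{2\beta_t}\partial_t h_{\fre^t}$ is the $h_{\fre^t}$-symmetrization of $\cK_t$. The gap is the step you defer to a ``short computation'': the four displayed equations do \emph{not} force the antisymmetric part of $\cK_t$ to vanish, so that step cannot be carried out. Concretely, let $\G$ be the simply connected Lie group (the universal cover of $\mathrm{E}(2)$) with left-invariant coframe $\fre=(e_u,e_l,e_n)$ satisfying $\dd e_u=0$, $\dd e_l=a\,e_n\wedge e_u$, $\dd e_n=-a\,e_l\wedge e_u$ for some $a\neq 0$. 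Take $\beta_t=1$, let $\fre^t$ be the rotation of $\fre$ in the $(l,n)$-plane, $e^t_u=e_u$, $e^t_l=\cos(at)\,e_l-\sin(at)\,e_n$, $e^t_n=\sin(at)\,e_l+\cos(at)\,e_n$, and set $\cK_t=a\,(e^t_l\otimes e^t_n-e^t_n\otimes e^t_l)$, so that $\cK_t(e^t_u)=0$, $\cK_t(e^t_l)=a\,e^t_n$, $\cK_t(e^t_n)=-a\,e^t_l$ (with the other contraction convention, replace $a$ by $-a$). One checks directly that all four displayed equations hold for every $t$. Yet $h_{\fre^t}=h_{\fre}$ is constant in $t$, so $\Theta_t=0$, while $\dd e^t_l=a\,e^t_n\wedge e^t_u\neq 0$ and $\partial_t e^t_l\neq 0$; hence both the first and second equations of \eqref{eq:leftinv} fail and $\{\beta_t,\fre^t\}$ is not a parallel spinor flow --- consistently, $\widetilde{\mathrm{E}}(2)$ does not occur in Theorem \ref{thm:allcauchygroups}. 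So for genuinely non-symmetric $\cK_t$ the antisymmetric components $A^t_{ln}$ are not pinned down by the preferred direction $e^t_u$, precisely because $A_t(e^t_u)$ can vanish identically while $A_t$ does not.

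The way out is that the lemma is intended, and is used in the remainder of Section \ref{sec:leftinvariantspinorflow}, with $\cK_t$ a family of \emph{symmetric} left-invariant two-tensors: immediately after the lemma the components $\Theta^t_{ab}$ promoted to independent variables are explicitly taken to form symmetric matrices. Under that hypothesis your first computation already finishes the argument: the symmetrization of $\cK_t$ is $\cK_t$ itself, hence $\Theta_t=\cK_t$ and the four displayed equations are literally equations \eqref{eq:leftinv}. This one-line identification is the paper's entire proof of the ``if'' direction. The additional structure-equation analysis you plan for the antisymmetric part is therefore unnecessary in the intended setting and, as the example above shows, impossible for general two-tensors.
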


\begin{proof}
The \emph{only if} direction follows immediately from the definition of left-invariant parallel Cauchy pair by taking $\{\cK_t\}_{t \in \cI}=\{\Theta_t\}_{t \in \cI}$. For the \emph{if} direction we simply compute:
\begin{equation*}
\Theta_t = - \frac{1}{2\beta_t}\partial_t h_{\fre^t} = - \frac{1}{2\beta_t} (( \partial_t e^t_a)\otimes e^t_a + e^t_a \otimes ( \partial_t e^t_a)) = \cK_t\, ,
\end{equation*}

\noindent
hence equations \eqref{eq:leftinv} are satisfied and $\left\{ \beta_t, \fre^t \right\}_{t\in \cI}$ is a left-invariant parallel spinor flow.
\end{proof}

\noindent
By the previous Lemma we promote the components of $\left\{\Theta_t \right\}_{t\in \cI}$ with respect to the basis $\left\{\fre^t \right\}_{t\in \cI}$ to be independent variables of the left-invariant parallel spinor flow equations \eqref{eq:leftinv}. Within this interpretation, the variables of left-invariant parallel spinor flow equations consist of triples $\left\{\beta_t,\fre^t,\Theta^t_{ab} \right\}_{t\in \cI}$, where $\left\{\Theta^t_{ab} \right\}_{t\in \cI}$ is a family of symmetric matrices. On the other hand, the integrability conditions of Lemma \ref{lemma:intecondli} are interpreted as a system of equations for a pair $\left\{\beta_t,\Theta^t_{ab} \right\}_{t\in \cI}$. In particular, the first equation in \eqref{eq:leftinv} is linear in the variable $\fre^t$ and can be conveniently rewritten as follows. For any family of coframes $\left\{\fre^t\right\}_{t\in\cI}$, set $\fre = \fre^0$ and consider the unique smooth path:
\begin{equation*}
\U^t \colon \cI \to \Gl_{+}(3,\mathbb{R})\, , \qquad t\mapsto \U^t\, ,
\end{equation*}

\noindent
such that $\fre^t = \U^t(\fre)$, where  $\Gl_{+}(3,\mathbb{R})$ denotes the identity component in the general linear group $\Gl(3,\mathbb{R})$. More explicitly:
\begin{equation*}
\fre^t_a = \sum_b \U^t_{ab} \fre_b\, , \qquad a, b = u, l ,n\, ,
\end{equation*}

\noindent
where $\U^t_{ab} \in C^{\infty}(\G)$ are the components of $\U^t$. Plugging $\fre^t = \U^t(\fre)$ in the first equation in \eqref{eq:leftinv} we obtain the following equivalent equation:
\begin{equation}
\label{eq:ptu}
\partial_t \U^t_{ac} + \beta_t \Theta^t_{ab} \U^t_{bc} = 0\, ,  \quad a, b, c = u, l, n\, ,
\end{equation}

\noindent
with initial condition $\U^0 = \mathrm{Id}$. 

\noindent 
A necessary condition for a solution $\left\{\beta_t,\Theta^t_{ab} \right\}_{t\in \cI}$ of the integrability conditions to arise from an honest left-invariant parallel spinor pair is the existence of a left-invariant coframe $\fre$ on $\Sigma$ such that $(\fre,\Theta)$ is a Cauchy pair, where $\Theta = \Theta^0_{ab} e_a\otimes e_b$. Consequently we define the set $\mathbb{I}(\Sigma)$ of \emph{admissible} solutions to the integrability equations as the set of pairs $(\left\{\beta_t,\Theta^t_{ab} \right\}_{t \in \cI} , \fre )$ such that $\left\{\beta_t,\Theta^t_{ab} \right\}_{t\in \cI}$ is a solution to the integrability equations and $(\fre,\Theta)$ is a left-invariant parallel Cauchy pair. 

\begin{proposition}
\label{prop:bijectionsolutions}
There exists a natural bijection $\varphi\colon \mathbb{I}(\Sigma) \to \cP(\Sigma)$ which maps every pair:
\begin{equation*}
(\left\{\beta_t,\Theta^t_{ab} \right\}_{t\in\cI} , \fre )\in \mathbb{I}(\Sigma)\, ,
\end{equation*}

\noindent
to the pair $\left\{ \beta_t,\fre^t = \U^t(\fre)\right\}_{t\in\cI}\in \cP(\Sigma)$, where $\left\{\U^t\right\}_{t\in\cI}$ is the unique solution of \eqref{eq:ptu} with initial condition $\U^0 = \mathrm{Id}$. 
\end{proposition}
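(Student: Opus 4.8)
The plan is to construct explicit inverse maps between $\mathbb{I}(\Sigma)$ and $\cP(\Sigma)$ and check that they are well-defined and mutually inverse. The forward map $\varphi$ is essentially dictated: given an admissible pair $(\{\beta_t,\Theta^t_{ab}\}_{t\in\cI},\fre)$, solve the linear ODE \eqref{eq:ptu} for $\{\U^t\}_{t\in\cI}$ with $\U^0=\Id$ (which exists and is unique on all of $\cI$ by standard linear ODE theory, and lands in $\Gl_+(3,\RR)$ since $\U^0=\Id$ lies in the identity component) and set $\fre^t := \U^t(\fre)$. The first task is to verify that $\{\beta_t,\fre^t\}_{t\in\cI}$ actually solves the left-invariant parallel spinor flow equations \eqref{eq:leftinv}, i.e. that it lies in $\cP(\Sigma)$. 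Equation \eqref{eq:ptu} is by construction equivalent to $\partial_t\fre^t+\beta_t\Theta_t(\fre^t)=0$, where $\Theta_t := \Theta^t_{ab}\,e^t_a\otimes e^t_b$. The equations $\partial_t(\Theta_t(e^t_u))=0$ and $\dd(\Theta_t(e^t_u))=0$ will follow from the integrability conditions of Lemma \ref{lemma:intecondli}, which $\{\beta_t,\Theta^t_{ab}\}_{t\in\cI}$ satisfies by hypothesis; one has to unwind that $\Theta_t(e^t_u)=\sum_b\Theta^t_{ub}e^t_b$ and use $\partial_t\Theta^t_{ub}=\beta_t\Theta^t_{ua}\Theta^t_{ab}$ together with $\partial_t e^t_b=-\beta_t\Theta^t_{bc}e^t_c$ to see the time derivative vanishes, and use the Maurer--Cartan structure of $\{\fre^t\}$ together with the algebraic integrability relations to see the exterior derivative vanishes.

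The genuinely substantive point is the second equation in \eqref{eq:leftinv}, namely $\dd\fre^t=\Theta_t(\fre^t)\wedge e^t_u$ for all $t$, not just at $t=0$. At $t=0$ this holds because $(\fre,\Theta)$ is a parallel Cauchy pair (admissibility). To propagate it in $t$, I would show that the two-form $\omega^t_a := \dd e^t_a-\Theta_t(e^t_a)\wedge e^t_u$ satisfies a linear homogeneous ODE in $t$ (a transport equation) with $\omega^0_a=0$, and conclude $\omega^t_a\equiv 0$ by uniqueness. Concretely, differentiate $\dd e^t_a$ in $t$ using $\partial_t e^t_a=-\beta_t\Theta^t_{ab}e^t_b$ and differentiate $\Theta_t(e^t_a)\wedge e^t_u$ using the $t$-evolution of $\Theta^t_{ab}$ and of $e^t_b$; the integrability conditions of Lemma \ref{lemma:intecondli} are exactly what is needed to make the inhomogeneous terms cancel, leaving $\partial_t\omega^t_a = (\text{linear in }\{\omega^t_b\})$. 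This is the main obstacle: organizing the bookkeeping so that the algebraic relations among the $\Theta^t_{ab}$ (the bottom four lines of Lemma \ref{lemma:intecondli}) conspire to close the system. In fact this computation is morally the same one already performed inside the proof of Lemma \ref{lemma:intecondli} (where $\dd(\partial_t e^t_a+\beta_t\Theta_t(e^t_a))=0$ was expanded), so I would reuse that identity.

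For the inverse map, given $\{\beta_t,\fre^t\}_{t\in\cI}\in\cP(\Sigma)$, set $\fre:=\fre^0$, let $\Theta_t:=-\tfrac{1}{2\beta_t}\partial_t h_{\fre^t}$ with components $\Theta^t_{ab}$ in the basis $\fre^t$, and let $\U^t\in\Gl_+(3,\RR)$ be the unique path with $\fre^t=\U^t(\fre)$ (this exists because $\fre^t$ and $\fre^0$ are two left-invariant coframes, hence related by a unique invertible matrix, which depends smoothly on $t$ and equals $\Id$ at $t=0$, so stays in the identity component). By Lemma \ref{lemma:cKTheta} the family $\{\beta_t,\Theta^t_{ab}\}_{t\in\cI}$ satisfies the integrability conditions of Lemma \ref{lemma:intecondli}, and restricting \eqref{eq:globhyperspinorflowI}--\eqref{eq:globhyperspinorflowII} to $t=0$ shows $(\fre,\Theta)$ is a parallel Cauchy pair, so $(\{\beta_t,\Theta^t_{ab}\}_{t\in\cI},\fre)\in\mathbb{I}(\Sigma)$; rewriting the first equation of \eqref{eq:leftinv} via $\fre^t=\U^t(\fre)$ shows $\U^t$ solves \eqref{eq:ptu} with $\U^0=\Id$. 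Finally I would check $\varphi$ and this assignment are mutually inverse: starting from $(\{\beta_t,\Theta^t_{ab}\},\fre)$, applying $\varphi$ and then the inverse returns the same data because the $\Theta^t_{ab}$ recovered from $h_{\fre^t}$ coincide with the original ones (this is exactly the computation in the proof of Lemma \ref{lemma:cKTheta}) and the $\U^t$ are determined by the same ODE; starting from $\{\beta_t,\fre^t\}$, the round trip returns $\U^t(\fre^0)=\fre^t$. Both compositions are the identity, and $\beta_t$ is carried along untouched throughout, so $\varphi$ is a bijection.
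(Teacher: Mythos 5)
Your proposal is correct and follows essentially the same route as the paper: existence and uniqueness of $\U^t$ from linear ODE theory, the first, third and fourth equations of \eqref{eq:leftinv} from the construction and the integrability conditions, and—crucially—propagation of the constraint $\dd\fre^t=\Theta_t(\fre^t)\wedge e^t_u$ by showing the defect two-form (the paper's $\mathfrak{w}^t$, your $\omega^t_a$) satisfies a homogeneous linear transport ODE with vanishing initial value because $(\fre,\Theta)$ is a parallel Cauchy pair, followed by the shape-operator construction of the inverse. The only nitpick is a citation slip: the fact that a parallel spinor flow satisfies the integrability conditions is Lemma \ref{lemma:intecondli} (with Lemma \ref{lemma:cKTheta} used to identify $\Theta_t$ as the shape operator), but this does not affect the argument.
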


\begin{remark}
\label{remark:inverse}
The inverse of $\varphi$  maps every left-invariant parallel spinor flow $\left\{ \beta_t,\fre^t\right\}_{t\in\cI}$ to the pair $(\left\{\beta_t,\Theta^t_{ab} \right\} , \fre )$, where $\Theta^t_{ab}$ are the components of the shape operator associated to $\left\{ \beta_t,\fre^t\right\}_{t\in\cI}$ in the basis $\left\{\fre^t\right\}_{t\in\cI}$ and $\fre = \fre^0$.
\end{remark}

\begin{proof}
Let $(\left\{\beta_t,\Theta^t_{ab} \right\} , \fre )\in \mathbb{I}(\Sigma)$ and let  $\left\{\U^t\right\}_{t\in\cI}$ be the solution of \eqref{eq:ptu} with initial condition $\U^0 = \mathrm{Id}$, which exists and is unique on $\cI$ by standard ODE theory \cite[Theorem 5.2]{CoddingtonLevinson}. We need to prove that $\left\{ \beta_t,\fre^t = \U^t(\fre)\right\}_{t\in\cI}$ is a left-invariant parallel spinor flow. Since $\left\{\U^t\right\}_{t\in\cI}$  satisfies \eqref{eq:ptu} for the given $\left\{\beta_t,\Theta^t_{ab} \right\}$, it follows that $\Theta^t = \Theta^t_{ab} e^t_a\otimes e^t_b$ is the shape operator associated to $\left\{ \beta_t,\fre^t \right\}_{t\in\cI}$ whence the first equation in \eqref{eq:leftinv} is satisfied. On the other hand, the third and fourth equations in \eqref{eq:leftinv} are immediately implied by the integrability conditions satisfied by  $\left\{\beta_t,\Theta^t_{ab} \right\}$. Regarding the second equation in \eqref{eq:leftinv}, we observe that the integrability conditions contain the equation $\dd (\Theta_t(\fre^t) \wedge e_u^t)=0$ and thus:
\begin{equation}
\label{eq:constraintintegrada}
\dd \fre^t= \Theta^t(\fre^t) \wedge e_u^t+ \mathfrak{w}^t\, ,
\end{equation}

\noindent
where $\{ \mathfrak{w}^t\}_{t \in \cI}$ is a family of triplets of closed two-forms on $\Sigma$. Taking the time derivative of the previous equations, plugging the exterior derivative of the first equation in \eqref{eq:leftinv} and using again the integrability conditions, we obtain that $\mathfrak{w}^t$ satisfies the following differential equation:
\begin{equation}
\label{eq:frwode}
\partial_t \mathfrak{w}^t_a = -\beta_t \Theta^t_{ad} \mathfrak{w}^t_d\, ,
\end{equation}

\noindent
with initial condition $\mathfrak{w}^0 = \mathfrak{w}$. Restricting equation \eqref{eq:constraintintegrada} to $t=0$ it follows that $\mathfrak{w}$ satisfies:

\begin{equation*}
\dd \fre= \Theta(\fre) \wedge e_u +  \mathfrak{w}\, ,
\end{equation*}

\noindent
Since by assumption $(\fre,\Theta)$ is left-invariant Cauchy pair, the previous equation is satisfied if and only if $\mathfrak{w} =0$ whence $\mathfrak{w}^t = 0$ by uniqueness of solutions of the linear differential equation \eqref{eq:frwode}. Therefore, the second equation in \eqref{eq:leftinv}  follows and $\varphi$ is well-defined. The fact that $\varphi$ is in addition a bijection follows directly by Remark \ref{remark:inverse} and hence we conclude. 
\end{proof}
 
\begin{corollary}
\label{cor:solutionsiff}
A pair $\left\{\beta_t,\fre^t \right\}_{t\in \cI}$ is a parallel spinor flow if and only if $(\left\{\beta_t,\Theta^t_{ab} \right\} , \fre )$ is an admissible solution to the integrability equations.
\end{corollary}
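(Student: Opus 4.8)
The statement to prove is Corollary \ref{cor:solutionsiff}: a pair $\left\{\beta_t,\fre^t\right\}_{t\in\cI}$ is a parallel spinor flow if and only if the associated data $(\left\{\beta_t,\Theta^t_{ab}\right\},\fre)$ is an admissible solution to the integrability equations. The plan is to deduce this essentially as a formal consequence of Proposition \ref{prop:bijectionsolutions} together with Remark \ref{remark:inverse}, with Lemma \ref{lemma:intecondli} used to check that the integrability conditions are indeed satisfied in the forward direction.

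\medskip

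First I would address the ``only if'' direction. Suppose $\left\{\beta_t,\fre^t\right\}_{t\in\cI}$ is a (left-invariant) parallel spinor flow. Set $\fre:=\fre^0$ and let $\Theta^t_{ab}$ be the components of the shape operator $\Theta_t$ in the basis $\left\{\fre^t\right\}_{t\in\cI}$, exactly as in Remark \ref{remark:inverse}. By Lemma \ref{lemma:intecondli}, the family $\left\{\beta_t,\Theta^t_{ab}\right\}_{t\in\cI}$ satisfies the integrability conditions, since those conditions were derived precisely from equations \eqref{eq:leftinv}. Moreover, restricting equations \eqref{eq:globhyperspinorflowI}--\eqref{eq:globhyperspinorflowII} to $t=0$ (equivalently, restricting the second equation in \eqref{eq:leftinv} and its companions to $t=0$) shows that $(\fre,\Theta)$, with $\Theta=\Theta^0_{ab}\,e_a\otimes e_b$, is a left-invariant parallel Cauchy pair in the sense of Definition \ref{def:KCC}; indeed the constraint equations \eqref{eq:exderKI}--\eqref{eq:exderKII} are exactly the $t=0$ slices of the parallel spinor flow equations. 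Hence $(\left\{\beta_t,\Theta^t_{ab}\right\},\fre)\in\mathbb{I}(\Sigma)$ by definition of admissibility.

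\medskip

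For the ``if'' direction, suppose $(\left\{\beta_t,\Theta^t_{ab}\right\},\fre)\in\mathbb{I}(\Sigma)$ is an admissible solution. Then Proposition \ref{prop:bijectionsolutions} applies directly: the map $\varphi$ sends this datum to $\left\{\beta_t,\fre^t=\U^t(\fre)\right\}_{t\in\cI}\in\cP(\Sigma)$, where $\U^t$ is the unique solution of \eqref{eq:ptu} with $\U^0=\Id$. Thus $\left\{\beta_t,\fre^t\right\}_{t\in\cI}$ is a parallel spinor flow, and one only needs to remark that this $\fre^t$ agrees with the given family once we recognize (via Remark \ref{remark:inverse}) that $\varphi$ and the passage from a flow to its shape-operator components are mutually inverse. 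Combining the two directions gives the claimed equivalence; I expect no genuine obstacle here, as all the real content has already been packaged into Lemma \ref{lemma:intecondli} and Proposition \ref{prop:bijectionsolutions}. The only point requiring a modicum of care is making the identification of the ``$t=0$ restriction of the flow equations'' with the parallel Cauchy differential system explicit, so that admissibility is seen to be both necessary and sufficient; this is, however, immediate from comparing \eqref{eq:exderKI}--\eqref{eq:exderKII} with \eqref{eq:leftinv}.
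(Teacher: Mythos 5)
Your argument is correct and is essentially the paper's own justification: the corollary is stated there without a separate proof, precisely as an immediate consequence of Proposition \ref{prop:bijectionsolutions} together with Remark \ref{remark:inverse}, with Lemma \ref{lemma:intecondli} and the $t=0$ restriction of \eqref{eq:leftinv} supplying admissibility in the forward direction, exactly as you do. The only point to keep explicit in the converse is that the identification $\fre^t=\U^t(\fre)$ rests on uniqueness of solutions of \eqref{eq:ptu}, which requires reading the corollary as the paper intends --- with $\Theta^t_{ab}$ a variable coupled to $\fre^t$ through the first equation of \eqref{eq:leftinv} --- rather than merely as the components of $-\tfrac{1}{2\beta_t}\partial_t h_{\fre^t}$ in the basis $\fre^t$; under the latter, purely derived, reading the appeal to Remark \ref{remark:inverse} would presuppose the flow property you are trying to establish.
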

 
\noindent
Therefore, solving the left-invariant parallel spinor flow is equivalent to solving the integrability conditions with initial condition $\Theta_{ab}$ being part of a left-invariant parallel Cauchy pair $(\fre,\Theta)$. We remark that $\left\{\beta_t \right\}_{t\in \cI}$ is of no relevance locally since it can be eliminated through a reparametrization of time after possibly shrinking $\cI$. However, regarding the long time existence of the flow as well as for applications to the construction of four-dimensional Lorentzian metrics it is convenient to keep track of $\cI$, whence we maintain $\left\{\beta_t \right\}_{t\in \cI}$ in the equations.

For further reference we define a \emph{quasi-diagonal}  left-invariant parallel spinor flow as one for which $\lambda=\sqrt{\tul^2+\tun^2} =0$. Since the function $t\to \int_{0}^t \beta_\tau \dd \tau$ is going to be a common occurrence in the following, we define:
\begin{equation*}
\cB_{t} := \int_{0}^t \beta_\tau \dd \tau\, .
\end{equation*}

\noindent
We distinguish now between the cases $\lambda = 0$ and $\lambda \neq 0$.

\begin{lemma}
\label{lemma:Thetaqdiagonal}
Let $\{\beta_t, \fre^t\}_{t \in \cI}$ be a quasi-diagonal left-invariant parallel spinor flow. Then, the only non-zero components of $\Theta^t$ are:
\begin{eqnarray*}
\tuu^t =\frac{\tuu}{1- \tuu \cB_t}\, , \quad \tll^t=\frac{\tll}{1- \tuu \cB_t}\, , \quad\tln^t =\frac{\tln}{1- \tuu \cB_t}\, , \quad \tnn^t=\frac{\tnn}{1- \tuu \cB_t}\, ,
\end{eqnarray*}

\noindent
where $\Theta^t$ is the shape operator associated to $\{\beta_t, \fre^t\}_{t \in \cI}$  and $\Theta = \Theta^0$. Furthermore, every such $\Theta^t$ satisfies the integrability equations with quasi-diagonal initial data.
\end{lemma}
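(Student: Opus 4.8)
The plan is to read off from Lemma~\ref{lemma:intecondli} the precise system of ordinary differential equations and algebraic constraints satisfied by the components $\{\Theta^t_{ab}\}_{t\in\cI}$ in the quasi-diagonal case, and then to integrate it in closed form. First I would observe that $\lambda=0$ means $\tul=\tun=0$; since Lemma~\ref{lemma:intecondli} gives $\partial_t\tul^t=\partial_t\tun^t=0$ with initial values $\tul$ and $\tun$, we obtain $\tul^t\equiv\tun^t\equiv 0$ for all $t\in\cI$. Consequently the only components of $\Theta^t$ that may be non-zero are $\tuu^t,\tll^t,\tln^t,\tnn^t$, every source term proportional to $\tul^t$ or $\tun^t$ drops out of the integrability conditions, and in particular all the algebraic constraints of Lemma~\ref{lemma:intecondli} are satisfied identically.

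The surviving evolution equations then decouple. The equation for $\tuu^t$ becomes $\partial_t\tuu^t=\beta_t(\tuu^t)^2$, a separable Riccati equation: dividing by $(\tuu^t)^2$ and noting $\partial_t\bigl(-1/\tuu^t\bigr)=\beta_t$ yields $1/\tuu^t=1/\tuu-\cB_t$, that is $\tuu^t=\tuu/(1-\tuu\cB_t)$, an identity which also holds trivially when $\tuu=0$ (both sides then vanish). With $\tuu^t$ now explicit, the remaining three equations read $\partial_t\Theta^t_{ab}=\beta_t\,\tuu^t\,\Theta^t_{ab}$ for $ab=ll,ln,nn$, which are linear and integrate to $\Theta^t_{ab}=\Theta_{ab}\exp\bigl(\int_0^t\beta_\tau\tuu^\tau\,\dd\tau\bigr)$; the substitution $s=\cB_\tau$, so that $\dd s=\beta_\tau\,\dd\tau$, turns the integral into $\int_0^{\cB_t}\tuu(1-\tuu s)^{-1}\,\dd s=-\log(1-\tuu\cB_t)$, giving $\Theta^t_{ab}=\Theta_{ab}/(1-\tuu\cB_t)$, as claimed.

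For the converse statement I would simply verify that the four functions just obtained solve the full integrability system of Lemma~\ref{lemma:intecondli} with quasi-diagonal initial data: the differential equations follow by differentiating the closed forms directly, using $\partial_t\cB_t=\beta_t$, and since $\tul^t=\tun^t=0$ the algebraic constraints together with the source terms $(\tul^t)^2$, $(\tun^t)^2$ and $\tun^t\tul^t$ all vanish identically, so nothing else needs checking. The only point worth a remark is that $1-\tuu\cB_t\neq 0$ throughout $\cI$, but this is automatic since $\tuu^t$ is by hypothesis a well-defined finite function on $\cI$. I do not expect a genuine obstacle here: the content of the lemma is the explicit closed form of $\Theta^t$ rather than any existence or regularity issue, and the only things to be careful about are the bookkeeping of the converse verification and the degenerate case $\tuu=0$.
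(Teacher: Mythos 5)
Your proposal is correct and follows essentially the same route as the paper: specialize the integrability conditions of Lemma \ref{lemma:intecondli} to $\tul=\tun=0$ (which persist in time since $\partial_t\tul^t=\partial_t\tun^t=0$) and integrate the resulting decoupled ODE system, the paper merely stating the general solution where you carry out the separable/linear integrations explicitly. The extra checks you include (the algebraic constraints holding identically and the degenerate case $\tuu=0$) are consistent with the paper's treatment, which relegates the domain issue $1-\tuu\cB_t\neq 0$ to Remark \ref{rem:maxintquasi}.
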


\begin{proof}
Setting $\tul =\tun =0$ in the integrability conditions we obtain the following equations:
\begin{equation*}
\partial_t \tuu^t=\beta_t (\tuu^t)^2\, , \quad \partial_t \tll^t=\beta_t \tll^t \tuu^t\, , \quad \partial_t \tln^t=\beta_t \tln^t\tuu^t\, , \quad \partial_t \tnn^t=\beta_t \tnn^t\tuu^t\, .
\end{equation*}

\noindent
whose general solution is given in the statement of the lemma.
\end{proof}
 
\begin{remark}
\label{rem:maxintquasi}
Let $\Theta_{uu}\neq 0$ and define $t_0$ to be the real number (in case it exists) with the smallest absolute value such that:
\begin{equation*}
\int_{0}^{t_0} \beta_\tau \dd \tau = \Theta_{uu}^{-1}\, .
\end{equation*}

\noindent
Then the maximal interval on which $\Theta^t$ is defined is $\cI= (-\infty, t_0)$ if $\Theta_{uu} > 0$ and $\cI= (t_0,\infty)$ if $\Theta_{uu} < 0$. This is also the maximal interval on which the left-invariant parallel spinor flow in the quasi-diagonal case can be defined. If such $t_0$ does not exist, then $\cI=\mathbb{R}$.
\end{remark}

\noindent
We consider now the non-quasi-diagonal case $\lambda  \neq 0$. Given a pair $\left\{\beta_t,\Theta^t_{ab} \right\}_{t\in \cI}$, we introduce for convenience the following function: 
\begin{equation*}
\cI \ni t\mapsto y_t = \lambda \, \cB_{t} + \mathrm{Arctan} \left[ \frac{\tuu}{\lambda}\right]\, ,
\end{equation*}

\noindent
where $\Theta_{ab}$ are the components of $\Theta$ in the basis $\fre$.

\begin{lemma}
\label{lemma:Thetageneral}
A pair $\left\{\beta_t,\Theta^t_{ab} \right\}_{t\in \cI}$ satisfies the integrability equations with non-quasi-diagonal initial value $\Theta_{ab}$ if and only if:
\begin{eqnarray*}
& \tuu^t = \lambda\,\mathrm{Tan}\left[y_t \right]\, , \quad \tul^t = \tul\, , \quad \tun^t = \tun\, , \quad \tll^t = c_{ll}\, \mathrm{Sec} \left[ y_t \right] - \frac{\tul^2}{\lambda} \mathrm{Tan}\left[ y_t\right]\, ,\\
& \tnn^t = c_{nn}\, \mathrm{Sec} \left[y_t\right] - \frac{\tun^2}{\lambda} \mathrm{Tan}\left[y_t\right]\, , \quad \tln^t = c_{ln}\, \mathrm{Sec} \left[y_t \right] - \frac{\tul \,\tun}{\lambda} \mathrm{Tan}\left[y_t\right]\, ,
\end{eqnarray*}

\noindent
where $c_{ll}, c_{nn}, c_{ln}\in\mathbb{R}$ are real constants given by:
\begin{equation*}
c_{ll} =  \frac{\tll \lambda^2 + \tul^2 \tuu}{\lambda\,\sqrt{\lambda^2 +   \tuu^2}}\, , \quad c_{nn} =   \frac{\tnn \lambda^2 + \tun^2 \tuu}{\lambda\,\sqrt{\lambda^2 +   \tuu^2}}\, , \quad c_{ln} =  \frac{\tln \lambda^2 + \tul \tun \tuu}{\lambda\,\sqrt{\lambda^2 +   \tuu^2}}\, ,
\end{equation*}

\noindent
such that the following algebraic equations are satisfied:
\begin{eqnarray}
\label{eq:algebraicTheta}
& \tln \tul = \tll \tun\, , \quad  \tnn \tul = \tln \tun\, , \nonumber \\
&\tln \tun + \tul (\tll + \tuu) = 0\, , \quad \tln \tul + \tun (\tnn + \tuu) = 0\, ,
\end{eqnarray}

\noindent
where $\Theta_{ab}$, $a,b = u,l,n$, denote the entries of $\Theta^t_{ab}$ at $t=0$.
\end{lemma}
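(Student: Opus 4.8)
The plan is to integrate the differential system of Lemma~\ref{lemma:intecondli} explicitly, one component at a time, and then substitute the resulting closed forms back into the algebraic part of that system. By Lemma~\ref{lemma:intecondli} we already know that $\tul^t=\tul$ and $\tun^t=\tun$ are constant, so $\lambda=\sqrt{\tul^2+\tun^2}$ is preserved by the flow and the first evolution equation reduces to the separable Riccati equation $\partial_t\tuu^t=\beta_t((\tuu^t)^2+\lambda^2)$. Since $\lambda\neq 0$, separating variables and integrating over $[0,t]$ with $\tuu^0=\tuu$ and $\cB_t=\int_0^t\beta_\tau\,\dd\tau$ gives $\mathrm{Arctan}[\tuu^t/\lambda]=\lambda\cB_t+\mathrm{Arctan}[\tuu/\lambda]=y_t$, that is, $\tuu^t=\lambda\,\mathrm{Tan}[y_t]$.

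With $\tuu^t$ in hand, the remaining three evolution equations for $\tll^t$, $\tln^t$, $\tnn^t$ are linear inhomogeneous first-order ODEs, all of the form $\partial_t f^t=\beta_t\tuu^t f^t-\beta_t c$ with $c=\tul^2,\ \tul\tun,\ \tun^2$ respectively. The relevant integrating factor is governed by $\int_0^t\beta_\tau\tuu^\tau\,\dd\tau=\int_{y_0}^{y_t}\mathrm{Tan}[y]\,\dd y=\log(\mathrm{Sec}[y_t]/\mathrm{Sec}[y_0])$, which is well defined wherever $\mathrm{Tan}[y_t]$ is finite, in particular on any interval on which $\cos y_t$ does not vanish (note $\cos y_0=\lambda/\sqrt{\lambda^2+\tuu^2}>0$). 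Using the change of variables $\beta_\tau\,\dd\tau=\lambda^{-1}\dd y_\tau$ to evaluate the particular integral and fixing the integration constant via $f^0=\tll,\ \tln,\ \tnn$, one arrives, after an elementary rewriting of $\cos y_0$ and $\sin y_0$ in terms of $\tuu$ and $\lambda$, at precisely the stated expressions, with integration constants $c_{ll}$, $c_{nn}$, $c_{ln}$. Conversely, these expressions visibly solve the evolution equations (for $\tuu^t$ one uses $1+\mathrm{Tan}^2[y_t]=\mathrm{Sec}^2[y_t]$ together with $\lambda^2=\tul^2+\tun^2$), so the evolution part of the system is equivalent to this closed form.

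It remains to impose the algebraic equations of Lemma~\ref{lemma:intecondli}, namely $\tln^t\tul=\tll^t\tun$, $\tnn^t\tul=\tln^t\tun$, $\tll^t\tul+\tln^t\tun+\tul\tuu^t=0$ and $\tln^t\tul+\tnn^t\tun+\tun\tuu^t=0$. Substituting the closed forms just obtained, the $\mathrm{Tan}[y_t]$-contributions cancel identically because $\tul^2+\tun^2=\lambda^2$, and each of the four identities collapses to a scalar multiple of $\mathrm{Sec}[y_t]$; for instance $\tln^t\tul-\tll^t\tun=(c_{ln}\tul-c_{ll}\tun)\,\mathrm{Sec}[y_t]$ and $\tll^t\tul+\tln^t\tun+\tul\tuu^t=(c_{ll}\tul+c_{ln}\tun)\,\mathrm{Sec}[y_t]$. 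Since $\mathrm{Sec}[y_t]$ never vanishes, each identity holds for all $t\in\cI$ if and only if its scalar coefficient does, and a short computation shows that, up to the nonzero factor $\lambda/\sqrt{\lambda^2+\tuu^2}$, these four coefficients are exactly the left-hand sides of the equations in \eqref{eq:algebraicTheta}. This establishes both directions of the lemma at once: a solution of the full integrability system necessarily has the closed form above (from the evolution part) and satisfies \eqref{eq:algebraicTheta} (from the algebraic part), while conversely the closed form together with \eqref{eq:algebraicTheta} solves every equation of the system.

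The only real difficulty is the bookkeeping in the middle step: running the three integrating-factor computations without sign slips and confirming that the integration constants collapse to the compact expressions $c_{ll}$, $c_{nn}$, $c_{ln}$, and then verifying that after substitution each of the four algebraic constraints genuinely degenerates (thanks to $\tul^2+\tun^2=\lambda^2$) to a single multiple of $\mathrm{Sec}[y_t]$ matching one equation of \eqref{eq:algebraicTheta}. Once this cancellation is noticed the argument is purely mechanical, so I expect no conceptual obstruction.
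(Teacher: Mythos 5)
Your proposal is correct and follows essentially the same route as the paper: use Lemma \ref{lemma:intecondli} to fix $\tul^t,\tun^t$, integrate the separable equation for $\tuu^t$ to get $\lambda\,\mathrm{Tan}[y_t]$, integrate the remaining linear ODEs (your integrating-factor and particular-solution bookkeeping, including the identification of $c_{ll},c_{nn},c_{ln}$ via $\cos y_0=\lambda/\sqrt{\lambda^2+\tuu^2}$, checks out), and then observe that the algebraic conditions reduce, after the $\mathrm{Tan}[y_t]$ cancellations, to the constant system \eqref{eq:algebraicTheta} on the initial data. No gaps.
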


\begin{remark}
Note that equations \eqref{eq:algebraicTheta} form an algebraic system for the entries of the initial condition $\Theta$, therefore restricting the allowed initial data that can be used to solve the integrability conditions. This is a manifestation of the fact that the initial data of the parallel spinor flow is constrained by the parallel Cauchy equations. The latter were solved in the left-invariant case in \cite{Murcia:2020zig}, as summarized in Theorem \ref{thm:allcauchygroups}, and its solutions can be easily verified to satisfy equations \eqref{eq:algebraicTheta} automatically.
\end{remark}

\begin{proof}
By Lemma \ref{lemma:intecondli} we have $\partial_t\Theta^t_{ul} = \partial_t\Theta^t_{un} = 0$ whence $\tul^t = \tul, \tun^t = \tun$ for some real constants $\tul, \tun \in \mathbb{R}$. Plugging these constants into the first equation of Lemma \ref{lemma:intecondli} it becomes immediately integrable with solution:
\begin{equation*}
\Theta^t_{uu} =   \lambda \mathrm{Tan}\left[ \lambda (\cB_t + k_1)\right]\, ,
\end{equation*} 

\noindent
for a certain constant $k_1\in \mathbb{R}$. Imposing $\Theta_{uu}^0 = \Theta_{uu}$ we obtain:
\begin{equation*}
k_1 = \frac{1}{\lambda}(\mathrm{Arctan} \left[ \frac{\tuu}{\lambda}\right] + n\pi)\, , \qquad n\in \mathbb{Z}\, ,
\end{equation*}

\noindent
and the expression for $\Theta^t_{uu}$ follows. Plugging now $\tuu^t = \lambda\,\mathrm{Tan}\left[y_t \right]$ in the remaining differential equations of Lemma \ref{lemma:intecondli} they can be directly integrated, yielding the expressions in the statement after imposing $\Theta^0_{ab} = \Theta_{ab}$. Plugging the explicit expressions for $\Theta^t_{ab}$ in the algebraic equations of Lemma \ref{lemma:intecondli}, these can be equivalently reformulated as the algebraic system \eqref{eq:algebraicTheta} for $\Theta^t_{ab}$ at $t=0$ and we conclude. 
\end{proof}

\begin{remark}
\label{rem:maxintnonquasi}
Let $t_-<0$ denote the largest value for which $\lambda \cB_{t_-}+ \mathrm{Arctan}\left[ \frac{\tuu}{\lambda} \right]=-\frac{\pi}{2}$ and let $t_+>0$ denote the smallest value for which $\lambda \cB_{t_+}+ \mathrm{Arctan}\left[ \frac{\tuu}{\lambda} \right]=\frac{\pi}{2}$ (if $t_-$, $t_+$ or both do not exist, we take by convention $t_{\pm} =\pm \infty$). Then, the maximal interval of definition on which $\Theta^t$ is defined is $\mathcal{I}=(t_-,t_+)$. 
\end{remark}
  
\subsection{Classification of left-invariant spinor flows}


Proposition \ref{lemma:intecondli} states that $\tul^t=\tul$ and $\tun^t=\tun$ for constants $\tul, \tun \in \mathbb{R}$. Therefore, we proceed to classify left-invariant parallel spinor flows in terms of the possible values of $\tul$ and $\tun$. We begin with the classification of quasi-diagonal left-invariant parallel spinor flows, defined by the condition $\tul=\tun=0$, that is, $\lambda = 0$.  
 
\begin{proposition}
\label{prop:liqd} 
Let $\{\beta_t, \fre^t\}_{t \in \cI}$ be a quasi-diagonal left-invariant parallel spinor flow with initial data $(\fre,\Theta)$ satisfying $\Theta_{uu} \neq 0$. Define $Q$ to be the orthogonal two by two matrix diagonalizing $\theta/\Theta_{uu}$ as follows:
\begin{equation*}
\frac{\theta}{\Theta_{uu}} =   Q	\begin{pmatrix}
		\rho_{+} & 0 \\
		0 & \rho_{-}
	\end{pmatrix}  
 Q^{\ast}
\end{equation*}

\noindent
with eigenvalues $\rho_{+}$ and $\rho_{-}$ and where $Q^\ast$ denotes the matrix transpose of $Q$. Then:
\begin{equation*}
e^t_u = (1-\tuu \cB_t) e_u\, , \quad 	\begin{pmatrix}
		e^t_l \\
		e^t_n  
	\end{pmatrix} = Q 
	\begin{pmatrix}
		\left[1- \Theta_{uu}\cB_t \right]^{\rho_{+}} & 0 \\
		0 &  \left[1- \Theta_{uu}\cB_t \right]^{\rho_{-}}
	\end{pmatrix}  Q^{\ast} \begin{pmatrix}
	e_l \\
	e_n  
\end{pmatrix}
\end{equation*}

\noindent
Conversely, for every family of functions $\left\{ \beta_{t}\right\}_{t\in\cI}$ the previous expression defines a parallel spinor flow on $\G$. The case $\tuu=0$ is recovered by taking the formal limit $\tuu \rightarrow 0$. 
\end{proposition}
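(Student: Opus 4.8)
The plan is to use the bijection $\varphi$ of Proposition~\ref{prop:bijectionsolutions} to reduce the statement to an explicit integration of the linear system \eqref{eq:ptu}, taking as input the closed form of the shape operator $\Theta^t$ already obtained for the quasi-diagonal case in Lemma~\ref{lemma:Thetaqdiagonal}. Indeed, by Proposition~\ref{prop:bijectionsolutions} and Remark~\ref{remark:inverse}, a quasi-diagonal left-invariant parallel spinor flow with initial data $(\fre,\Theta)$, $\tuu\neq0$, is the same datum as the path $\{\U^t\}_{t\in\cI}$ in $\Gl_{+}(3,\RR)$ solving $\partial_t\U^t_{ac}+\beta_t\Theta^t_{ab}\U^t_{bc}=0$ with $\U^0=\Id$, where $\{\Theta^t_{ab}\}_{t\in\cI}$ is the family from Lemma~\ref{lemma:Thetaqdiagonal}; then $\fre^t=\U^t(\fre)$, so it suffices to integrate this ODE.

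By Lemma~\ref{lemma:Thetaqdiagonal} the only nonzero entries of $\Theta^t$ are $\Theta^t_{uu},\Theta^t_{ll},\Theta^t_{ln},\Theta^t_{nn}$, each equal to the corresponding entry of $\Theta$ divided by $1-\tuu\cB_t$; hence the coefficient matrix $\beta_t\Theta^t$ leaves the splitting $\langle e_u\rangle\oplus\langle e_l,e_n\rangle$ invariant and \eqref{eq:ptu} decouples. For the $u$-row, $\partial_t\U^t_{uc}=-\beta_t\Theta^t_{uu}\U^t_{uc}$ together with $\partial_t\log(1-\tuu\cB_t)=-\beta_t\tuu(1-\tuu\cB_t)^{-1}=-\beta_t\Theta^t_{uu}$ and $\U^0=\Id$ give $\U^t_{uc}=(1-\tuu\cB_t)\,\delta_{uc}$, i.e.\ $e^t_u=(1-\tuu\cB_t)e_u$. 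The entries $\U^t_{lu},\U^t_{nu}$ solve a homogeneous linear system with zero initial value, hence vanish, while the $(l,n)$-block $M_t$ satisfies $\partial_t M_t=-\beta_t(1-\tuu\cB_t)^{-1}\theta\,M_t$, $M_0=\Id$, with $\theta$ the \emph{constant} symmetric matrix of the statement; since $\theta$ is time-independent this integrates to $M_t=\exp\!\left(-\theta\int_0^t\tfrac{\beta_\tau}{1-\tuu\cB_\tau}\,\dd\tau\right)=\exp\!\left(\tfrac{\theta}{\tuu}\log(1-\tuu\cB_t)\right)$, and diagonalizing $\theta/\tuu=Q\,\mathrm{diag}(\rho_+,\rho_-)Q^\ast$ with the fixed orthogonal $Q$ of the statement yields $M_t=Q\,\mathrm{diag}\!\left((1-\tuu\cB_t)^{\rho_+},(1-\tuu\cB_t)^{\rho_-}\right)Q^\ast$. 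Combined with the vanishing of $\U^t_{lu},\U^t_{nu}$, this is precisely the claimed relation between $(e^t_l,e^t_n)$ and $(e_l,e_n)$.

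For the converse, let $(\fre,\Theta)$ be a left-invariant parallel Cauchy pair with $\tuu\neq0$ and $\tul=\tun=0$, and let $\{\beta_t\}_{t\in\cI}$ be any smooth family; define $\{\Theta^t_{ab}\}$ by the formulas of Lemma~\ref{lemma:Thetaqdiagonal}. By the last sentence of that lemma this is a solution of the integrability equations with quasi-diagonal initial data, and since $(\fre,\Theta)$ is a parallel Cauchy pair the pair $(\{\beta_t,\Theta^t_{ab}\},\fre)$ is admissible; the computation above exhibits the displayed coframe as $\U^t(\fre)$ for $\U^t$ the solution of \eqref{eq:ptu}, so by Corollary~\ref{cor:solutionsiff} the family $\{\beta_t,\fre^t\}_{t\in\cI}$ is a parallel spinor flow. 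The case $\tuu=0$ is obtained from the formal limit $\tuu\to0$: since $\tuu^{-1}\log(1-\tuu\cB_t)\to-\cB_t$, one gets $e^t_u\to e_u$ and $M_t\to\exp(-\cB_t\,\theta)$, and this limiting family is checked directly against \eqref{eq:leftinv}. The computation is essentially routine integration; the one point worth highlighting is that, because $\theta$ is constant, its orthogonal diagonalizer $Q$ may be taken independent of $t$, which is exactly what makes the $(l,n)$-block integrable in closed form, so I do not foresee a genuine obstacle.
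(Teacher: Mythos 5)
Your proposal is correct and follows essentially the same route as the paper: reduce via Proposition \ref{prop:bijectionsolutions} and Corollary \ref{cor:solutionsiff} to integrating \eqref{eq:ptu} with the explicit $\Theta^t$ of Lemma \ref{lemma:Thetaqdiagonal}, solve the $u$-row, and diagonalize the constant block $\theta/\Theta_{uu}$ by $Q$ to obtain the powers $\left[1-\Theta_{uu}\cB_t\right]^{\rho_{\pm}}$. The only cosmetic difference is that you package the $(l,n)$-block as a matrix exponential $\exp\bigl(\tfrac{\theta}{\Theta_{uu}}\log(1-\Theta_{uu}\cB_t)\bigr)$ before diagonalizing, whereas the paper diagonalizes first and integrates component-wise; the content is identical.
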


\begin{proof}
Define the function $\cI\ni t\to x_t := \mathrm{Log} \left[1-\tuu \cB_t \right]$. By Proposition \ref{prop:bijectionsolutions} and Corollary \ref{cor:solutionsiff} it suffices to use the explicit expression for $\Theta^t$ obtained in Lemma \ref{lemma:Thetaqdiagonal} to solve Equation \eqref{eq:ptu} with initial condition $\U^0 = \mathrm{Id}$ on a simply connected Lie group admitting quasi-diagonal parallel Cauchy pairs. Plugging the explicit expression of $\Theta^t$ in \eqref{eq:ptu} we obtain:
\begin{equation}
\label{eq:ptu2}
\begin{pmatrix}
\partial_t \U_{uu}^t & \partial_t \U_{ul}^t & \partial_t \U_{un}^t \\
\partial_t \U_{lu}^t & \partial_t \U_{ll}^t & \partial_t \U_{ln}^t \\
\partial_t \U_{nu}^t & \partial_t \U_{nl}^t & \partial_t \U_{nn}^t 
\end{pmatrix} = \frac{\partial_t x_t}{\tuu} 
\begin{pmatrix}
\U_{uu}^t \Theta_{uu} & \U_{ul}^t \Theta_{uu} & \U_{un}^t \Theta_{uu}\\
\U_{lu}^t \Theta_{ll} + \U_{nu}^t \Theta_{ln} & \U_{ll}^t \Theta_{ll} + \U_{nl}^t \Theta_{ln} & \U_{ln}^t \Theta_{ll} + \U_{nn}^t \Theta_{ln} \\
\U_{lu}^t \Theta_{ln} + \U_{nu}^t \Theta_{nn} & \U_{ll}^t \Theta_{ln} + \U_{nl}^t \Theta_{nn} & \U_{ln}^t \Theta_{ln} + \U_{nn}^t \Theta_{nn}
\end{pmatrix}  
\end{equation}

\noindent
in terms of the initial data  $\Theta_{ab}$. The previous differential system can be equivalently written as follows:
\begin{equation*}
\partial_t\, \U^t_{uc} = \partial_t x_t \, \U^t_{uc}\, ,  \qquad \begin{pmatrix}
	\partial_t \U_{lc}^t \\
	\partial_t \U_{nc}^t   
\end{pmatrix} = \frac{\partial_t x_t}{\tuu} 
\begin{pmatrix}
\Theta_{ll}   &   \Theta_{ln} \\
\Theta_{ln}   &  \Theta_{nn}  
\end{pmatrix}  \begin{pmatrix}
 \U_{lc}^t \\
 \U_{nc}^t   
\end{pmatrix} \, , \quad c=u,l,n\, .
\end{equation*}

\noindent
The general solution to the equations for $\U^t_{uc}$ with initial condition $\U^0 = \mathrm{Id}$ is given by:
\begin{equation*}
\U^t_{uu}=1-\tuu \cB_t\, , \quad \U^t_{ul}=\U^t_{un} =0\, .
\end{equation*} 

\noindent
Consider now the diagonalization of the constant matrix occurring in the differential equations for $\U^t_{ic}$:
\begin{equation*}
\frac{1}{\tuu} 
\begin{pmatrix}
\Theta_{ll}   &   \Theta_{ln} \\
\Theta_{ln}   &  \Theta_{nn}  
\end{pmatrix}   = Q\begin{pmatrix}
\rho_{+}   &   0 \\
0   &  \rho_{-}  
\end{pmatrix} Q^{\ast} \, ,
\end{equation*}

\noindent
where $Q$ is a two by two orthogonal matrix and $Q^{\ast}$ is its transpose. The eigenvalues are explicitly given by:
\begin{equation*}
\rho_{\pm} = \frac{T \pm \sqrt{T^2- 4\Delta}}{2\Theta_{uu}} \, .  
\end{equation*}

\noindent
We obtain:
\begin{equation*}
Q^{\ast} \begin{pmatrix}
		\partial_t \U_{lc}^t \\
		\partial_t \U_{nc}^t   
	\end{pmatrix} = \partial_t x_t 
	\begin{pmatrix}
		\rho_{+}   &   0 \\
		0   &  \rho_{-}  
	\end{pmatrix} Q^{\ast}  \begin{pmatrix}
		\U_{lc}^t \\
		\U_{nc}^t   
	\end{pmatrix} \, , \quad c=u,l,n\, , 
\end{equation*}

\noindent
whose general solution is given by:
\begin{equation*}
\begin{pmatrix}
\U_{lc}^t \\
\U_{nc}^t   
\end{pmatrix} = Q \begin{pmatrix}
k^{+}_c e^{\rho_{+} x_t} \\
k^{-}_c e^{\rho_{-} x_t}   
\end{pmatrix} = Q \begin{pmatrix}
k^{+}_c \left[1- \Theta_{uu}\cB_t \right]^{\rho_{+}} \\
k^{-}_c \left[1- \Theta_{uu}\cB_t \right]^{\rho_{-}}  
\end{pmatrix} \, , \quad c=u,l,n\, , 
\end{equation*}

\noindent
for constants $k^{+}_c, k^{-}_c\in \mathbb{R}$. Imposing the initial condition $\U^0 =\mathrm{Id}$ we obtain the following expression for $k^{+}_c$ and $k^{-}_c$:
\begin{equation*}
\begin{pmatrix}
		k^{+}_c   \\
		k^{-}_c     
	\end{pmatrix} = Q^{\ast}\begin{pmatrix}
	\delta_{lc} \\
	\delta_{nc}   
\end{pmatrix}  \, , \quad c=u,l,n\, , 
\end{equation*}

\noindent
whence:
\begin{equation*}
	\begin{pmatrix}
		k^{+}_u   \\
		k^{-}_u     
	\end{pmatrix} =0 \, , \quad \begin{pmatrix}
	k^{+}_l   \\
	k^{-}_l     
\end{pmatrix} = Q^{\ast}\begin{pmatrix}
1 \\
0    
\end{pmatrix} = \begin{pmatrix}
Q^{\ast}_{ll} \\
Q^{\ast}_{nl}  
\end{pmatrix} \, , \quad \begin{pmatrix}
k^{+}_n   \\
k^{-}_n     
\end{pmatrix} = Q^{\ast}\begin{pmatrix}
0 \\
1   
\end{pmatrix} = \begin{pmatrix}
Q^{\ast}_{ln} \\
Q^{\ast}_{nn}  
\end{pmatrix}  \, . 
\end{equation*}

\noindent
We conclude that:
\begin{equation*}
\begin{pmatrix}
\U_{ll}^t &  \U_{ln}^t \\
\U_{nl}^t &   \U_{nn}^t 
\end{pmatrix} =  Q 
\begin{pmatrix}
e^{\rho_{+} x_t} & 0 \\
0 &  e^{\rho_{-} x_t}
\end{pmatrix}  Q^{\ast} = Q 
\begin{pmatrix}
\left[1- \Theta_{uu}\cB_t \right]^{\rho_{+}} & 0 \\
0 &  \left[1- \Theta_{uu}\cB_t \right]^{\rho_{-}}
\end{pmatrix}  Q^{\ast}
\end{equation*}

\noindent
and the statement is proven. The converse follows by construction upon use of Lemma \ref{lemma:cKTheta} and Proposition \ref{prop:bijectionsolutions}. It can be easily seen that the case $\Theta_{uu} =0$ is obtained by taking the formal limit $\tuu \rightarrow 0$ and we conclude.
\end{proof}

\begin{remark}
The Ricci tensor of the family of Riemannian metrics $\left\{ h_{\fre^t}\right\}_{t\in \cI}$ associated to a left-invariant quasi-diagonal parallel spinor flow $\left\{\beta_t,\fre^t\right\}_{t\in\cI}$ is given by:
\begin{equation}
\ric^{h_{\fre^t}}=-T^t\Theta_t+\frac{\mathcal{H}_t}{2} e_u^t \otimes e_u^t\, ,
\end{equation}

\noindent
where $T^t = \Theta^t_{ll} + \Theta^t_{nn}$. If $\cH_t = 0$ for every $t\in \cI$, that is, if the parallel Cauchy pair defined by $\left\{\beta_t,\fre^t\right\}_{t\in\cI}$ is constrained Ricci flat, then:
\begin{equation}
\ric^{h_{\fre^t}}=\frac{T^t}{2\beta_t}\partial_t h_{\fre^t}\, ,
\end{equation}

\noindent
which, after a reparametrization of the time coordinate can be brought into the form $\ric^{h_{\fre^{\tau}}} = -2\partial_{\tau} h_{\fre^{\tau}}$ after possibly shrinking $\cI$. Hence, this gives a particular example of a left-invariant Ricci flow on $\G$.
\end{remark}

\noindent
We consider now $\tul \tun=0$ but $\tul^2+\tun^2 \neq 0$. This case necessarily corresponds to $\G = \tau_2 \oplus \mathbb{R}$.

\begin{proposition}
\label{prop:linocero}
Let $\{\beta_t, \fre^t\}_{t \in \cI}$ be a left-invariant parallel spinor flow  with initial parallel Cauchy pair $(\fre,\Theta)$ satisfying $\tul \tun=0$ and $\lambda \neq 0$. Then:
\begin{itemize}[leftmargin=*]
\item If $\tul=0 $ the following holds:
\begin{eqnarray*}
& e_u^t =\left (1-\tuu \cB_t \right )\, e_u- \tun\cB_t\, \, e_n\, , \quad e_l^t = e_l \, , \\
& e_n^t = \left ( \frac{\tuu}{\tun}-\frac{\lambda}{\tun}(1-\tuu\cB_t  )\mathrm{Tan}\left[y_t \right]\right) e_u +\left (1+\lambda \cB_t \, \mathrm{Tan}\left[y_t\right]\right) e_n\, .
\end{eqnarray*}

\item  If $\tun=0 $ the following holds:
\begin{eqnarray*}
& e_u^t =\left (1-\tuu \cB_t \right )\, e_u- \tul \cB_t\, \, e_l\, , \quad e_n^t = e_n \, , \\
& e_l^t = \left ( \frac{\tuu}{\tul}-\frac{\lambda}{\tul}(1-\tuu\cB_t  )\mathrm{Tan}\left[y_t \right]\right) e_u +\left (1+\lambda \cB_t \, \mathrm{Tan}\left[y_t\right]\right) e_l\, .
\end{eqnarray*}
\end{itemize}

\noindent
Conversely, every such family $\{\beta_t, \fre^t\}_{t \in \cI}$ is a left-invariant parallel spinor flow for every $\{\beta_t\}_{t \in \cI}$. 
\end{proposition}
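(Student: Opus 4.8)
The plan is to exploit the bijection $\varphi\colon \mathbb{I}(\Sigma)\to\cP(\Sigma)$ of Proposition \ref{prop:bijectionsolutions} together with Corollary \ref{cor:solutionsiff}, which reduce the problem to solving the linear ODE \eqref{eq:ptu} for the path $\{\U^t\}_{t\in\cI}$ with initial condition $\U^0=\mathrm{Id}$, where the shape operator $\Theta^t$ is the one given explicitly by Lemma \ref{lemma:Thetageneral} in the non-quasi-diagonal case. Since we are assuming $\tul\tun=0$ and $\lambda\neq 0$, Theorem \ref{thm:allcauchygroups} (together with Corollary \ref{cor:isog}) forces $\G=\tau_2\oplus\mathbb{R}$ and pins down the admissible initial shape operators: for $\tul=0$ we have $\Theta=-T\,e_u\otimes e_u+\tun\,e_u\odot e_n+\tnn\,e_n\otimes e_n$ with the algebraic constraints \eqref{eq:algebraicTheta}, which here read $\tnn=-\tuu$ and $\tll=\tln=0$ (so in fact $\tnn=-T=-\tuu$); the case $\tun=0$ is the mirror image under $l\leftrightarrow n$. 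I would state this reduction first, then substitute the resulting $\Theta^t$ from Lemma \ref{lemma:Thetageneral} — whose only nonvanishing components are $\tuu^t=\lambda\,\mathrm{Tan}[y_t]$, $\tun^t=\tun$, $\tnn^t=c_{nn}\,\mathrm{Sec}[y_t]-\frac{\tun^2}{\lambda}\mathrm{Tan}[y_t]$ — into \eqref{eq:ptu}.

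Next I would solve the resulting linear system for the components $\U^t_{ac}$. The key structural observation, exactly as in the quasi-diagonal proof of Proposition \ref{prop:liqd}, is that with $\tul=0$ the coframe direction $e_l$ decouples entirely: the equations for $\U^t_{lc}$ force $\U^t_{lu}=\U^t_{ln}=0$ and $\U^t_{ll}=1$, giving $e^t_l=e_l$. What remains is a $2\times 2$ linear ODE in the $(u,n)$-block, driven by the matrix
\begin{equation*}
\beta_t\begin{pmatrix}\tuu^t & \tun \\ 0 & \tnn^t\end{pmatrix}
=\beta_t\begin{pmatrix}\lambda\,\mathrm{Tan}[y_t] & \tun \\ 0 & c_{nn}\,\mathrm{Sec}[y_t]-\tfrac{\tun^2}{\lambda}\mathrm{Tan}[y_t]\end{pmatrix}\, ,
\end{equation*}
which is upper-triangular, hence integrable by quadratures: first solve the scalar equation for $\U^t_{uu}$ (and $\U^t_{un}$), using $\partial_t y_t=\lambda\beta_t$ so that $\int\beta_t\lambda\,\mathrm{Tan}[y_t]\,dt=-\log\cos[y_t]+\text{const}$, which produces factors of $\mathrm{Sec}[y_t]$ and $\cos y_0=\lambda/\sqrt{\lambda^2+\tuu^2}$; then feed this into the $\U^t_{nc}$ equations and integrate again. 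Imposing $\U^0=\mathrm{Id}$ fixes the constants. A useful sanity check is that $\partial_t h_{\fre^t}=-2\beta_t\Theta^t(h)$ must hold automatically, and that the resulting coframe satisfies the Maurer–Cartan equations of $\tau_2\oplus\mathbb{R}$; these follow from the general machinery (Lemma \ref{lemma:cKTheta}, Proposition \ref{prop:bijectionsolutions}) rather than needing to be re-verified by hand. For the $\tun=0$ branch I would simply invoke the symmetry exchanging the indices $l$ and $n$ in all the above, which is a symmetry of the flow equations \eqref{eq:leftinv}.

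The converse direction — that for \emph{any} family $\{\beta_t\}_{t\in\cI}$ the displayed formulas define a parallel spinor flow — follows by running the bijection $\varphi$ backwards: the stated $\fre^t$ determines $h_{\fre^t}$ and hence $\Theta^t=-\frac{1}{2\beta_t}\partial_t h_{\fre^t}$, which one checks coincides with the $\Theta^t$ of Lemma \ref{lemma:Thetageneral} for the given initial data; since that $\Theta^t$ solves the integrability equations and $(\fre,\Theta)$ is by construction a left-invariant parallel Cauchy pair on $\tau_2\oplus\mathbb{R}$, Corollary \ref{cor:solutionsiff} gives that $\{\beta_t,\fre^t\}_{t\in\cI}$ is a parallel spinor flow. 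I expect the main obstacle to be purely computational bookkeeping: carrying out the two successive quadratures cleanly and matching the integration constants so that the answer comes out in the compact closed form involving $\mathrm{Tan}[y_t]$ and $\cB_t$ as stated, in particular correctly tracking the constant $c_{nn}$ and the value of $y_0$. There is no conceptual difficulty beyond recognizing the triangular structure; the delicate point is just algebraic simplification of the trigonometric expressions.
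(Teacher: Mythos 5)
Your overall strategy coincides with the paper's: reduce via Proposition \ref{prop:bijectionsolutions} and Corollary \ref{cor:solutionsiff} to the linear ODE \eqref{eq:ptu} with the $\Theta^t$ of Lemma \ref{lemma:Thetageneral}, use Theorem \ref{thm:allcauchygroups} and \eqref{eq:algebraicTheta} to pin down the admissible initial data (for $\tul=0$: $\tll=\tln=0$ and $\tnn=-\tuu$, whence $c_{ll}=c_{ln}=c_{nn}=0$ and $\tnn^t=-\tuu^t$), observe that the $l$-direction decouples so that $e^t_l=e_l$, solve the remaining block with $\U^0=\mathrm{Id}$, obtain the $\tun=0$ branch by the $l\leftrightarrow n$ symmetry, and get the converse from Lemma \ref{lemma:cKTheta} and Proposition \ref{prop:bijectionsolutions}. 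However, there is a concrete error at the computational heart of your argument: the $(u,n)$-block of \eqref{eq:ptu} is \emph{not} upper-triangular. Since $\Theta^t$ is symmetric with $\tun^t=\tun\neq 0$, the reduced system is
\begin{equation*}
\partial_t\U^t_{uc}+\beta_t\left(\lambda\,\mathrm{Tan}[y_t]\,\U^t_{uc}+\tun\,\U^t_{nc}\right)=0\,,\qquad
\partial_t\U^t_{nc}+\beta_t\left(\tun\,\U^t_{uc}-\lambda\,\mathrm{Tan}[y_t]\,\U^t_{nc}\right)=0\,,
\end{equation*}
so the driving matrix is $\beta_t\begin{pmatrix}\lambda\,\mathrm{Tan}[y_t] & \tun\\ \tun & -\lambda\,\mathrm{Tan}[y_t]\end{pmatrix}$, which is fully coupled; the lower-left entry you set to zero is in fact $\tun\neq 0$. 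Consequently the proposed mechanism of "solve the scalar equation for $\U^t_{uu}$ first, then feed the result into the $\U^t_{nc}$ equations by quadrature" fails as stated: the equation for $\U^t_{uc}$ already contains the unknown $\U^t_{nc}$.

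The flaw is repairable rather than conceptual: the coupled non-autonomous $2\times 2$ system is still explicitly integrable (for instance by passing to a second-order equation, by trying the ansatz that $\U^t_{uu}$ and $\U^t_{un}$ are affine in $\cB_t$, or simply by verifying the closed-form solution $\U^t_{uu}=1-\tuu\cB_t$, $\U^t_{un}=-\tun\cB_t$, $\U^t_{nn}=1+\lambda\cB_t\,\mathrm{Tan}[y_t]$, $\U^t_{nu}=\frac{\tuu}{\tun}-\frac{\lambda}{\tun}(1-\tuu\cB_t)\mathrm{Tan}[y_t]$ by differentiation), which is exactly what the paper does; but as written your key quadrature step is unjustified. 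A minor further slip: in the relevant row of Theorem \ref{thm:allcauchygroups} one has $\tuu=-T$ and $\tnn=T$, so your parenthetical "$\tnn=-T$" has the wrong sign, although the relation you actually use, $\tnn=-\tuu$, is correct. The remaining ingredients --- the decoupling giving $e^t_l=e_l$, the index-exchange symmetry for the $\tun=0$ case, and the converse via the bijection $\varphi$ --- agree with the paper and are sound.
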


\begin{proof}
We prove the case $\tul=0$ and $\tun \neq 0$ since the case $\tun=0$ and $\tul \neq 0$ follows similarly. Setting $\tul=0$ and assuming $\tun \neq 0$ in Lemma \ref{lemma:Thetageneral} we immediately obtain:
\begin{equation*}
\tuu^t=-\tnn^t=\lambda\mathrm{Tan} \left[ y_t\right]\, , \qquad \Theta^t_{ll} = \Theta^t_{ln} = 0\, .
\end{equation*}

\noindent
where we have also used that, in this case, $\Theta_{ln} = \Theta_{ll} = 0$ and $\tuu=-\tnn$ as summarized in Theorem \ref{thm:allcauchygroups}. Hence: 
\begin{equation*}
\Theta^t = \Theta_{un}
\begin{pmatrix}
0 & 0 & 1\\
0 & 0 & 0\\
1 & 0 & 0
\end{pmatrix}  +  \lambda\mathrm{Tan} \left[ y_t\right]
\begin{pmatrix}
1 & 0 & 0\\
0 & 0 & 0\\
0 & 0 & -1
\end{pmatrix} \, ,
\end{equation*}

\noindent
and Equation \eqref{eq:ptu} reduces to:
\begin{equation*}
\partial_t \U^t + \beta_t \Theta_{un} \begin{pmatrix}
	\U_{nu}^t & \U_{nl}^t & \U_{nn}^t\\
	0 & 0 & 0\\
	\U_{uu}^t & \U_{ul}^t & \U_{un}^t 
\end{pmatrix} + \lambda \beta_t \mathrm{Tan} \left[ y_t\right]
\begin{pmatrix}
\U_{uu}^t & \U_{ul}^t & \U_{un}^t\\
0 & 0 & 0\\
-\U_{nu}^t & -\U_{nl}^t & -\U_{nn}^t
\end{pmatrix} = 0 \, ,
\end{equation*}

\noindent
or, equivalently:
\begin{eqnarray*}
\partial_t \U^t_{uc} + \beta_t \lambda\mathrm{Tan} \left[ y_t\right] \U^t_{uc}+\beta_t \tun \U^t_{nc} = 0\, , \quad \partial_t\, \U^t_{lc} = 0\, , \quad
\partial_t\, \U^t_{nc} -\beta_t \lambda\mathrm{Tan} \left[ y_t\right] \U_{nc}+\beta_t \tun \U^t_{uc} =0\, . 
\end{eqnarray*}

\noindent
The general solution to this system with initial condition $\U^{0}=\mathrm{Id}$ is given by:
\begin{eqnarray*}
& \U^t_{uu} = 1-\tuu \cB_t\, , \quad \U^t_{un}=-\tun \cB_t\, , \quad \U^t_{ul}=\U^t_{lu}=\U^t_{ln}=\U^t_{nl}=0\, , \quad \U^t_{ll}=1\,, \\ 
& \U^t_{nn} = 1+ \lambda \cB_t  \mathrm{Tan}\left[ y_t \right]\, , \quad \U^t_{nu} = \frac{\tuu}{\tun}-\frac{\lambda}{\tun}(1-\tuu\cB_t  ) \mathrm{Tan}\left[ y_t \right] \, ,
\end{eqnarray*}

\noindent
which implies the statement. The converse follows by construction upon use of Lemma \ref{lemma:cKTheta} and Proposition \ref{prop:bijectionsolutions}.
\end{proof}

\begin{remark}
The Ricci tensor of the family of metrics $\left\{ h_{\fre^t}\right\}_{t\in\cI}$ associated to a left-invariant parallel spinor with  if $\tul=0$ but $\tun \neq 0$ reads:
\begin{equation*}
\ric^{h_{\fre^t}}=-\Theta_t \circ \Theta_t=\frac{\mathcal{H}_t}{4}  (h_{\fre^t}-e_n^t \otimes e_n^t)\, .
\end{equation*}

\noindent
Recall that $\nabla^{h_{\fre^t}} e_n^t=0$ and thus $\left\{h_{\fre^t}, e_n^t \right\}_{t\in\cI}$ defines a family of $\eta\,$-Einstein cosymplectic structures \cite{Capelletti,Olszak}. On the other hand, if  $\tun=0$ but $\tul \neq 0$ the curvature of $\left\{ h_{\fre^t}\right\}_{t\in\cI}$ is given by: 
\begin{equation*}
\ric^{h_{\fre^t}}=-\Theta_t \circ \Theta_t=\frac{\mathcal{H}_t}{4}  (h_{\fre^t}-e_l^t \otimes e_l^t)\, .
\end{equation*}

\noindent
whence $\{ h_{\fre^t}, e_l^t\}_{t\in\cI}$ defines as well a family of $\eta\,$-Einstein cosymplectic structures on $\G$.
\end{remark}

\noindent
Finally we consider $\tul \tun \neq 0$, a case that again corresponds to $\G = \tau_{2} \oplus \mathbb{R}$.
\begin{proposition}
\label{prop:tultunneq0}
Let $\{\beta_t, \fre^t\}_{t \in \cI}$ be a left-invariant parallel spinor flow  with initial parallel Cauchy pair $(\fre,\Theta)$ satisfying $\tul \tun \neq 0$. Then:
\begin{eqnarray*}
& e_u^t = e_u+\Bt (T e_u-\tul e_l-\tun e_n)\, ,\\
& e_l^t = -\frac{\tul}{\lambda} \left( \frac{T}{\lambda} + \left (1+ T \Bt\right ) \mathrm{Tan} [y_t] \right)\, e_u +\left ( 1+\frac{\tul^2 \mathcal{B}_t}{\lambda} \mathrm{Tan} [y_t]  \right) e_l+\frac{\tul \tun \Bt }{\lambda} \mathrm{Tan} [y_t] \, \, e_n\, ,\\ 
& e_n^t = -\frac{\tun}{\lambda} \left (\frac{T}{\lambda} + \left ( 1+ T \Bt \right) \mathrm{Tan} [y_t]\right) \, e_u  + \frac{\tul \tun \Bt}{\lambda}  \mathrm{Tan} [y_t] \, e_l+\left (1+\frac{\tun^2 \Bt}{ \lambda} \ \mathrm{Tan} [y_t]  \right )\, e_n\, ,  
\end{eqnarray*}

\noindent
Conversely, every such family $\{\beta_t, \fre^t\}_{t \in \cI}$ is a left-invariant parallel spinor flow for every $\{\beta_t\}_{t \in \cI}$. 
\end{proposition}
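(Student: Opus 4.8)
The plan is to follow the strategy used for Propositions \ref{prop:liqd} and \ref{prop:linocero}. By Proposition \ref{prop:bijectionsolutions} and Corollary \ref{cor:solutionsiff} it suffices to produce the unique path $\{\U^t\}_{t\in\cI}$ solving the linear system \eqref{eq:ptu} with $\U^0 = \mathrm{Id}$, where $\Theta^t$ is the explicit family of Lemma \ref{lemma:Thetageneral} and the initial value $\Theta$ is part of a left-invariant parallel Cauchy pair with $\tul\tun\neq 0$. The first step is to simplify $\Theta^t$ in this regime: the algebraic constraints \eqref{eq:algebraicTheta} (equivalently, the corresponding row of Theorem \ref{thm:allcauchygroups} for $\G = \tau_2\oplus\mathbb{R}$) force $\tll = -\tul^2\tuu/\lambda^2$, $\tln = -\tul\tun\tuu/\lambda^2$, $\tnn = -\tun^2\tuu/\lambda^2$ and hence $\tuu = -T$; plugging these into the constants $c_{ll}, c_{ln}, c_{nn}$ of Lemma \ref{lemma:Thetageneral} makes all three vanish, so that $\Theta^t$ takes the ``rank-one-plus-cross'' form $\tuu^t = \lambda\,\mathrm{Tan}[y_t]$, $\tul^t = \tul$, $\tun^t = \tun$, $\tll^t = -\tfrac{\tul^2}{\lambda}\mathrm{Tan}[y_t]$, $\tln^t = -\tfrac{\tul\tun}{\lambda}\mathrm{Tan}[y_t]$, $\tnn^t = -\tfrac{\tun^2}{\lambda}\mathrm{Tan}[y_t]$.

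With this $\Theta^t$ in hand the second step is to integrate \eqref{eq:ptu}. I would rotate the $(e_l,e_n)$-plane so that $\tul e_l+\tun e_n$ points along a single coordinate direction; in that frame the $(l,n)$-block of $\Theta^t$ is rank one and supported on that direction, so \eqref{eq:ptu} splits into a trivial equation for the component transverse to $(\tul,\tun)$, which stays constant, and a $2\times 2$ linear system in the plane spanned by $e_u$ and the $(\tul,\tun)$-direction whose coefficient matrix is $\beta_t\left(\begin{smallmatrix}\lambda\,\mathrm{Tan}[y_t] & \lambda \\ \lambda & -\lambda\,\mathrm{Tan}[y_t]\end{smallmatrix}\right)$. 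Changing the independent variable from $t$ to $y_t$ via $\partial_t y_t = \lambda\beta_t$ turns this into $\tfrac{d}{dy}U = -\left(\begin{smallmatrix}\mathrm{Tan}[y] & 1 \\ 1 & -\mathrm{Tan}[y]\end{smallmatrix}\right)U$, which integrates explicitly --- e.g.\ multiply through by $\cos y$ and diagonalise the resulting reflection matrix, or use variation of constants. Transporting the solution back to the original frame and imposing $\U^0 = \mathrm{Id}$ (using $\mathrm{Tan}[y_0] = \tuu/\lambda = -T/\lambda$) pins down the constants and reproduces the coframe $\fre^t$ of the statement, uniqueness of solutions of linear ODEs (\cite[Theorem 5.2]{CoddingtonLevinson}) guaranteeing there is nothing else. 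As in the earlier propositions, a faster presentation simply reads $\U^t$ off the claimed $\fre^t$, checks $\U^0=\mathrm{Id}$, and verifies by direct substitution that it annihilates $\partial_t\U^t + \beta_t\Theta^t\U^t$.

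The converse --- that for every $\{\beta_t\}_{t\in\cI}$ the displayed family is a left-invariant parallel spinor flow --- follows exactly as in Propositions \ref{prop:liqd} and \ref{prop:linocero}: the $\Theta^t$ above solves the integrability conditions with the prescribed parallel Cauchy initial data by Lemma \ref{lemma:Thetageneral}, so Lemma \ref{lemma:cKTheta} together with Proposition \ref{prop:bijectionsolutions} does the rest. Finally $\Delta = \tll\tnn - \tln^2 = 0$ while $\lambda\neq 0$, so Corollary \ref{cor:isog} identifies $\G \cong \tau_2\oplus\mathbb{R}$. I expect the main obstacle to be the explicit integration of the non-autonomous $2\times 2$ system with $\mathrm{Tan}[y_t]$ coefficients (or, on the verify-by-substitution route, the bookkeeping needed to see that the displayed $\U^t$, with its mixture of $\Bt$, $\mathrm{Tan}[y_t]$ and the relation $\tuu=-T$, really does solve \eqref{eq:ptu}); the two reductions and the converse are routine given what is already established.
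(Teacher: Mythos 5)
Your proposal is correct and essentially reproduces the paper's own proof: Lemma \ref{lemma:Thetageneral} together with the algebraic constraints (equivalently the relevant row of Theorem \ref{thm:allcauchygroups}, which gives $\tuu=-T$ and $c_{ll}=c_{ln}=c_{nn}=0$) pins down $\Theta^t$, one then integrates the linear system \eqref{eq:ptu} with $\U^0=\mathrm{Id}$, and the converse follows from Lemma \ref{lemma:cKTheta} and Proposition \ref{prop:bijectionsolutions}, exactly as in the paper. Your rotation onto the $(\tul,\tun)$-direction is only a cosmetic reorganization of the same integration, which the paper carries out directly in the original frame; note the resulting $2\times 2$ block is elementary (in the variable $y$ the $u$-component satisfies $p''=0$, so $p$ is affine in $y$ and $q=-p'-p\,\mathrm{Tan}[y]$), so the $y$-dependent ``reflection'' diagonalisation you sketch is not actually needed, and your fallback of verifying the displayed $\U^t$ by substitution also suffices.
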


\begin{proof}
Assuming $\tul,\tun\neq 0$ in Lemma \ref{lemma:Thetageneral} we obtain:
\begin{eqnarray*}
\tuu^t = \lambda \mathrm{Tan}\left[ y_t \right]\, , \quad \tll^t =\frac{\tul}{\tun}\tln^t\, , \quad \tnn^t=\frac{\tun}{\tul} \tln^t \, , \quad \tln^t=-\frac{\tul \tun}{\tul^2+\tun^2} \tuu^t\, .
\end{eqnarray*}

\noindent
Note that $\tuu^t=-\tll^t-\tnn^t$. Hence:
\begin{equation*}
\Theta^t = 
\begin{pmatrix}
0 & \Theta_{ul} & \Theta_{un}\\
\Theta_{ul} & 0 & 0\\
\Theta_{un} & 0 & 0
\end{pmatrix}  -  \frac{\mathrm{Tan} \left[ y_t\right]}{\lambda}
\begin{pmatrix}
-\lambda^2  & 0 & 0\\
0 & \Theta_{ul}^2 & \Theta_{ul}\Theta_{un}\\
0 & \Theta_{ul}\Theta_{un} & \Theta_{un}^2
\end{pmatrix} \, ,
\end{equation*}

\noindent
and Equation \eqref{eq:ptu} reduces to:
\begin{eqnarray*}
& \frac{1}{\beta_t}\partial_t \U^t_{uc} + \U^t_{lc} \tul + \U^t_{nc} \tun + \U^t_{uc} \tuu^t  =0\, ,\\
& \frac{1}{\beta_t}\partial_t \U^t_{lc} + \tul \left (\U^t_{uc}- \lambda^{-1} (\tul \U^t_{lc}+\tun \U_{nc}^t) \mathrm{Tan} \left[ y_t\right] \right ) =0\, ,\\
& \frac{1}{\beta_t}\partial_t \U^t_{nc} + \tun \left (\U^t_{uc}- \lambda^{-1} (\tul \U^t_{lc}+\tun \U_{nc}^t) \mathrm{Tan} \left[ y_t\right] \right ) =0\, .
\end{eqnarray*}

\noindent
The general solution to this system with initial condition $\U^{0}=\mathrm{Id}$ is given by:
\begin{eqnarray*}
& \U^t_{uu} =1 - \Theta_{uu} \Bt\, , \quad \U^t_{ul}=-\tul \Bt\, , \quad \U^t_{un}=-\tun \Bt\, ,\\ 
& \U^t_{lu} =  \frac{\tul}{\lambda} \left(\frac{\Theta_{uu}}{\lambda} - \left (1 - \Theta_{uu}\Bt\right ) \mathrm{Tan}[y_t] \right)\, ,\quad \U^t_{ll} = 1+\frac{\tul^2 \mathcal{B}_t}{\lambda} \mathrm{Tan}[y_t] \, , \quad \U^t_{ln}=\frac{\tul \tun \Bt }{\lambda} \mathrm{Tan}[y_t]\, , \\ 
& \U^t_{nu} =\frac{\tun}{\lambda} \left(\frac{\Theta_{uu}}{\lambda} - \left (1 - \Theta_{uu}\Bt\right ) \mathrm{Tan}[y_t] \right)\, , \quad  \U^t_{nl} =\frac{\tul \tun \Bt}{\lambda} \mathrm{Tan}[y_t]\, , \quad 
\U^t_{nn}=1+\frac{\tun^2 \Bt}{\lambda}  \mathrm{Tan}[y_t] \, , 
\end{eqnarray*}

\noindent
and we conclude.
\end{proof}
 
\begin{remark}
The three-dimensional Ricci tensor of the family of Riemannian metrics $\left\{h_{\fre^t}\right\}_{t\in \cI}$ associated to a left invariant parallel spinor flow with $\Theta_{ul}\Theta_{un} \neq 0$ reads:
\begin{equation*}
\ric^{h_{\fre^t}}=-\Theta_t \circ \Theta_t =\frac{\mathcal{H}_t}{4} (h_{\fre^t}-\eta_t \otimes \eta_t)\, , \quad \eta_t=\frac{1}{\sqrt{\tul^2+\tun^2}}( \tun e_l^t-\tul e_n^t)\, ,
\end{equation*}

\noindent
Note that $\nabla^{h_{\fre^t}} \eta_t=0$, so $\{ h_{\fre^t},\eta_t\}_{t \in \cI}$ defines a family of $\eta\,$-Einstein cosymplectic Riemannian structures on $\G$.
\end{remark}
 
\noindent
As a corollary to the classification of left-invariant parallel spinor flows presented in Propositions \ref{prop:liqd}, \ref{prop:linocero} and \ref{prop:tultunneq0} we can explicitly obtain the evolution of the Hamiltonian constraint in each case. 

\begin{corollary}
Let $\left\{\beta_t,\fre^t\right\}_{t\in\cI}$ be a left-invariant parallel spinor in $(M,g)$. 
\begin{itemize}[leftmargin=*]
\item If $\tul=\tun=0$, then $\mathcal{H}_t=\frac{\mathcal{H}_{0}}{(1-\tuu \cB_t)^2}$.
 
\item If $\tul=0$ but $\tun \neq 0$ then $\mathcal{H}_t=\frac{\tun^2 \mathcal{H}_{0}}{\tuu^2+\tun^2} \sec^2 \left[    \lambda  \cB_t  + \mathrm{Arctan}\left[ \frac{\tuu}{\lambda} \right] \right]$.

\item If $\tun=0$ but $\tul \neq 0$ then $\mathcal{H}_t=\frac{\tul^2 \mathcal{H}_{0}}{\tuu^2+\tul^2} \sec^2 \left[     \lambda \cB_t  + \mathrm{Arctan}\left[ \frac{\tuu}{\lambda}\right] \right]$.

\item If $\tul,\tun \neq 0$ then $\mathcal{H}_t=\frac{\lambda^2 \mathcal{H}_{0}}{\lambda^2 + \tuu^2} \sec^2 \left[ \lambda \cB_t + \mathrm{Arctan}\left[ \frac{\tuu}{\lambda}\right] \right] $.
\end{itemize}

\noindent
where $\cH_0$ is the Hamiltonian constraint at time $t=0$. 
\end{corollary}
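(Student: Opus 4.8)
The plan is to prove that, for a left-invariant parallel spinor flow, the function $t\mapsto\mathcal{H}_t$ solves the single linear first-order ODE $\partial_t\mathcal{H}_t=2\beta_t\,\Theta^t_{uu}\,\mathcal{H}_t$, and then to integrate this ODE explicitly using the expressions for $\Theta^t_{uu}$ already recorded in Lemmas \ref{lemma:Thetaqdiagonal} and \ref{lemma:Thetageneral}.

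To obtain the ODE I would first note that, since $\{\beta_t\}_{t\in\cI}$ consists of constants on $\G$, the family $\{\frf_t\}_{t\in\cI}$ determined by $\dd\frf_t=-\Theta_t(e^t_u)$ and $\partial_t\frf_t=\dd\beta_t(e^t_u)$ has $\partial_t\frf_t=0$, hence is $t$-independent; write $\frf:=\frf_t$. Next I would recall from the proof of Theorem \ref{thm:preservingconstraints} the identity $\dd(\mathcal{H}e^{-2\frf})(u^{\sharp})=0$, obtained there by taking the divergence of the Ricci identity of Proposition \ref{prop:ricci} and using that $u=e^{\frf}(\beta_t\dd t+e^t_u)$ is parallel and $g$-null. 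For the metric $g=-\beta_t^2\,\dd t\otimes\dd t+h_{\fre^t}$ one has $u^{\sharp}=e^{\frf}\big(-\beta_t^{-1}\partial_t+(e^t_u)^{\sharp_{h_{\fre^t}}}\big)$. Since $\mathcal{H}$ is left-invariant it is constant on each slice $\Sigma_t$, equal there to $\mathcal{H}_t$, and therefore annihilated by the tangential field $(e^t_u)^{\sharp}$; on the other hand $\partial_t\frf=0$ and $(e^t_u)^{\sharp}(\frf)=\dd\frf\big((e^t_u)^{\sharp}\big)=-\Theta^t_{uu}$. Feeding these facts into $\dd(\mathcal{H}e^{-2\frf})(u^{\sharp})=0$ and dividing by the nonvanishing factor $e^{-\frf}$ yields
\begin{equation*}
\partial_t\mathcal{H}_t=2\,\beta_t\,\Theta^t_{uu}\,\mathcal{H}_t\, ,\qquad\text{whence}\qquad \mathcal{H}_t=\mathcal{H}_0\,\exp\!\Big(2\!\int_0^t\!\beta_\tau\,\Theta^\tau_{uu}\,\dd\tau\Big)\, .
\end{equation*}

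It then remains to compute the integral case by case. If $\lambda=\sqrt{\tul^2+\tun^2}=0$ then $\Theta^\tau_{uu}=\tuu/(1-\tuu\cB_\tau)$ by Lemma \ref{lemma:Thetaqdiagonal}, and the substitution $s=\cB_\tau$, $\dd s=\beta_\tau\,\dd\tau$, gives $2\int_0^t\beta_\tau\Theta^\tau_{uu}\,\dd\tau=-2\log\vert1-\tuu\cB_t\vert$, so $\mathcal{H}_t=\mathcal{H}_0/(1-\tuu\cB_t)^2$. If $\lambda\neq0$ then $\Theta^\tau_{uu}=\lambda\,\mathrm{Tan}[y_\tau]$ with $y_\tau=\lambda\cB_\tau+\mathrm{Arctan}[\tuu/\lambda]$ by Lemma \ref{lemma:Thetageneral}, and since $\dd y_\tau=\lambda\beta_\tau\,\dd\tau$ the integral equals $2\int_{y_0}^{y_t}\mathrm{Tan}[y]\,\dd y=2\log\vert\cos y_0/\cos y_t\vert$; using $\cos^2 y_0=(1+\tuu^2/\lambda^2)^{-1}=\lambda^2/(\lambda^2+\tuu^2)$ this becomes $\mathcal{H}_t=\mathcal{H}_0\,\tfrac{\lambda^2}{\lambda^2+\tuu^2}\,\sec^2[y_t]$, and replacing $\lambda^2$ by $\tun^2$, $\tul^2$, or $\tul^2+\tun^2$ according to whether $\tul=0$, $\tun=0$, or $\tul\tun\neq0$ reproduces the three remaining formulas. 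As a cross-check, the same expressions follow by tracing the Ricci-tensor identities in the remarks immediately preceding the corollary against the definition of $\mathcal{H}_t$, which forces $\mathcal{H}_t=2\big(\tuu^t\,\mathrm{Tr}_{h_{\fre^t}}\Theta_t-\vert\Theta_t\vert^2_{h_{\fre^t}}\big)$ when $\lambda=0$ and $\mathcal{H}_t=-2\vert\Theta_t\vert^2_{h_{\fre^t}}$ when $\lambda\neq0$, after which the formulas for $\Theta^t_{ab}$ from the lemmas give the answers directly.

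The only delicate point should be the middle paragraph: computing $u^{\sharp}$ for the lapse-normalized Lorentzian metric and carefully tracking which terms the transversal field $\partial_t$ and the tangential field $(e^t_u)^{\sharp}$ annihilate when applied to $\mathcal{H}$ and to $\frf$ — it is essential that $\mathcal{H}$ is constant along the slices $\Sigma_t$ while $\frf$ is generally not. Once the linear ODE $\partial_t\mathcal{H}_t=2\beta_t\Theta^t_{uu}\mathcal{H}_t$ is established, everything else is a one-dimensional change of variables.
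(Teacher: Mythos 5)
Your proof is correct, and it takes a genuinely different route from the paper. The paper obtains the corollary as a direct by-product of the classification: it computes $\cH_t=\mathrm{R}_{h_{\fre^t}}-\vert\Theta_t\vert^2_{h_{\fre^t}}+\mathrm{Tr}_{h_{\fre^t}}(\Theta_t)^2$ explicitly from the coframes and shape operators given in Propositions \ref{prop:liqd}, \ref{prop:linocero} and \ref{prop:tultunneq0} (equivalently from the Ricci-tensor identities recorded in the remarks following them), case by case. You instead recycle the structural identity $\dd(\cH e^{-2\frf_t})(u^{\sharp})=0$ from the proof of Theorem \ref{thm:preservingconstraints}, observe that left-invariance kills both $\partial_t\frf_t$ (since $\dd\beta_t=0$) and the tangential derivative of $\cH$ (since $\cH$ is constant on each slice), and that $(e^t_u)^{\sharp}(\frf_t)=-\Theta^t_{uu}$, which collapses the identity to the scalar transport equation $\partial_t\cH_t=2\beta_t\Theta^t_{uu}\cH_t$; integrating with the expressions for $\Theta^t_{uu}$ from Lemmas \ref{lemma:Thetaqdiagonal} and \ref{lemma:Thetageneral} then yields all four formulas uniformly (the three $\lambda\neq0$ cases are a single computation with $\lambda^2$ specialized at the end). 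Your computation of $u^{\sharp}$ for the lapse-normalized metric and of the various directional derivatives checks out, and the integrals are elementary, so the argument is complete. What your approach buys is that it bypasses any curvature computation for the explicit metrics and makes manifest why $\cH_t$ can only rescale and never acquire zeros, which is exactly the consistency with Theorem \ref{thm:preservingconstraints} noted after the corollary; what the paper's route buys is that the curvature data are needed anyway for the surrounding remarks (the Ricci flow and $\eta$-Einstein cosymplectic observations), so the corollary comes for free once those are in place.
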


\noindent
Since the secant function has no zeroes, the Hamiltonian constraint vanishes for a given $t\in \cI$, and hence for every $t\in \cI$, if and only if it vanish at $t=0$, consistently with Theorem \ref{thm:preservingconstraints}. Theorem \ref{thm:allcauchygroups} implies that only quasi-diagonal left-invariant parallel spinor flows admit constrained Ricci flat initial data. Therefore the Hamiltonian constraint of left-invariant parallel spinor flows with $\lambda \neq 0$ is non-vanishing for every $t\in\cI$ and such left-invariant parallel spinor flows cannot produce four-dimensional Ricci flat Lorentzian metrics. 


\subsection{Proof of Theorem \ref{thm:hflows}}


Theorem \ref{thm:hflows} follows through a direct computation by using the explicit form of the left-invariant parallel spinor flow obtained in Propositions \ref{prop:liqd}, \ref{prop:linocero} and \ref{prop:tultunneq0} for each of the possible cases, after using Theorem \ref{thm:allcauchygroups} to identify the underlying Lie group in each case.


\appendix


\phantomsection
\bibliographystyle{JHEP}


\end{document}